\numberwithin{equation}{section}
\newtheorem{Theorem}{Theorem}[section]
\newtheorem*{Theorem*}{Theorem}
\newtheorem{Corollary}[Theorem]{Corollary}
\newtheorem{Lemma}[Theorem]{Lemma}
\newtheorem{Proposition}[Theorem]{Proposition}
 { \theoremstyle{definition}

\newtheorem{Remark}[Theorem]{Remark} }
\newcommand{\field}[1]{\mathbb{#1}}
\newcommand{\Z}{\field{Z}}
\begin{document}
%\allowdisplaybreaks

\newcommand{\arXivNumber}{2307.02346}

\renewcommand{\thefootnote}{}

\renewcommand{\PaperNumber}{032}

\FirstPageHeading

\ShortArticleName{Bilateral Bailey Lattices and Andrews--Gordon Type Identities}

\ArticleName{Bilateral Bailey Lattices\\ and Andrews--Gordon Type Identities\footnote{This paper is a~contribution to the Special Issue on Basic Hypergeometric Series Associated with Root Systems and Applications in honor of Stephen C.~Milne's 75th birthday. The~full collection is available at \href{https://www.emis.de/journals/SIGMA/Milne.html}{https://www.emis.de/journals/SIGMA/Milne.html}}}

\Author{Jehanne DOUSSE~$^{\rm a}$, Fr\'ed\'eric JOUHET~$^{\rm b}$ and Isaac KONAN~$^{\rm b}$}

\AuthorNameForHeading{J.~Dousse, F.~Jouhet and I.~Konan}

\Address{$^{\rm a)}$~Universit\'e de Gen\`eve, 7--9, rue Conseil G\'en\'eral, 1205 Gen\`eve, Switzerland}
\EmailD{\href{mailto:jehanne.dousse@unige.ch}{jehanne.dousse@unige.ch}}
\URLaddressD{\url{https://www.unige.ch/~doussej/}}

\Address{$^{\rm b)}$~Univ Lyon, Universit\'e Claude Bernard Lyon 1, UMR5208, Institut Camille Jordan,\\
\hphantom{$^{\rm b)}$}~69622 Villeurbanne, France}
\EmailD{\href{jouhet@math.univ-lyon1.fr}{jouhet@math.univ-lyon1.fr}, \href{konan@math.univ-lyon1.fr}{konan@math.univ-lyon1.fr}}
\URLaddressD{\url{https://math.univ-lyon1.fr/~jouhet/}}

\ArticleDates{Received October 15, 2024, in final form April 11, 2025; Published online April 29, 2025}

\Abstract{We show that the Bailey lattice can be extended to a bilateral version in just a~few lines from the bilateral Bailey lemma, using a very simple lemma transforming bilateral Bailey pairs relative to $a$ into bilateral Bailey pairs relative to $a/q$. Using this and similar lemmas, we give bilateral versions and simple proofs of other (new and known) Bailey lattices, including a Bailey lattice of Warnaar and the inverses of Bailey lattices of Lovejoy. As consequences of our bilateral point of view, we derive new $m$-versions of the Andrews--Gordon identities, Bressoud's identities, a new companion to Bressoud's identities, and the Bressoud--G\"ollnitz--Gordon identities. Finally, we give a new elementary proof of another very general identity of Bressoud using one of our Bailey lattices.}

\Keywords{Bailey lemma; Bailey lattice; Andrews--Gordon identities; Bressoud identities; $q$-series; bilateral series}

\Classification{11P84; 05A30; 33D15; 33D90}

\renewcommand{\thefootnote}{\arabic{footnote}}
\setcounter{footnote}{0}

\section{Introduction and statement of results}\label{sec:intro}

A classical approach to obtain and prove $q$-series identities is the Bailey lemma, originally found by Bailey~\cite{Ba}, and whose iterative strength was later highlighted by Andrews~\cite{A1, A2, AAR} through the so-called Bailey chain.
Fix complex numbers $a$ and $q$. Recall~\cite{Ba} that a Bailey pair $((\alpha_n)_{n\geq0}, (\beta_n)_{n\geq0})$ ($(\alpha_n, \beta_n)$ for short) relative to $a$ is a pair of sequences satisfying
\begin{equation}\label{bp}
\beta_n=\sum_{j=0}^n\frac{\alpha_j}{(q)_{n-j}(aq)_{n+j}}\qquad\forall n\in\mathbb{N}.
\end{equation}
Here and throughout the paper, we use standard $q$-series notations which can be found in~\cite{GR}
\begin{equation*}
(a)_\infty = (a;q)_\infty:=\prod_{j\geq 0}\bigl(1-aq^j\bigr)\qquad\mbox{and}\qquad(a)_k = (a;q)_k:=\frac{(a;q)_\infty}{\bigl(aq^k;q\bigr)_\infty},
\end{equation*}
where $k \in \Z$, and
$
(a_1,\ldots,a_m)_k:=(a_1)_k\cdots(a_m)_k$,
where $k$ is an integer or infinity, and as usual~${|q|<1}$ to ensure convergence of infinite products.

The Bailey lemma describes how, given a Bailey pair, one can produce infinitely many of them. Originally, Bailey~\cite{Ba} stated the Bailey transform and applied
it in a number of cases without iterating it, to obtain (what
is now called) the weak Bailey lemma. Andrews~\cite{A1} reformulated and generalised Bailey's result to what is called the strong Bailey lemma or Bailey lemma, exhibiting its iterative nature, therefore giving rise to the concept of Bailey chain. Also Paule~\cite{P} independently noticed that Bailey's result could be iterated (using one extra parameter at each step instead of the two parameters used by Andrews, see below).
\begin{Theorem}[Bailey lemma]\label{thm:baileylemma}
If $(\alpha_n, \beta_n)$ is a Bailey pair relative to $a$, then so is $(\alpha'_n, \beta'_n)$, where
\begin{equation*}\alpha'_n=\frac{(\rho,\sigma)_n(aq/\rho\sigma)^n}{(aq/\rho,aq/\sigma)_n} \alpha_n \qquad\mbox{and}\qquad\beta'_n=\sum_{j=0}^n\frac{(\rho,\sigma)_j(aq/\rho\sigma)_{n-j}(aq/\rho\sigma)^j}{(q)_{n-j}(aq/\rho,aq/\sigma)_n} \beta_j.
\end{equation*}
\end{Theorem}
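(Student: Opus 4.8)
The plan is to verify directly that the new pair $(\alpha'_n,\beta'_n)$ satisfies the defining relation~\eqref{bp} relative to $a$, i.e.\ that $\beta'_n=\sum_{i=0}^n\alpha'_i/\bigl((q)_{n-i}(aq)_{n+i}\bigr)$. I would start from the given formula for $\beta'_n$ and insert the hypothesis that $(\alpha_n,\beta_n)$ is a Bailey pair, replacing each $\beta_j$ by $\sum_{i=0}^j\alpha_i/\bigl((q)_{j-i}(aq)_{j+i}\bigr)$. Since both sums are finite, I may freely interchange them so that $i$ runs from $0$ to $n$ and $j$ from $i$ to $n$; the problem then reduces to evaluating, for each fixed $i$, the inner sum
\begin{equation*}
S_i:=\sum_{j=i}^n\frac{(\rho,\sigma)_j\,(aq/\rho\sigma)_{n-j}\,(aq/\rho\sigma)^j}{(q)_{n-j}\,(q)_{j-i}\,(aq)_{j+i}}
\end{equation*}
in closed form, and checking that $S_i/(aq/\rho,aq/\sigma)_n$ equals the coefficient of $\alpha_i$ predicted by the target relation.

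The technical heart is the evaluation of $S_i$. Setting $j=i+k$ and $m=n-i$, I would split each factor as $(x)_{i+k}=(x)_i\,(xq^i)_k$, write $(aq)_{2i+k}=(aq)_{2i}\,(aq^{2i+1})_k$, and apply the standard reversal identities to $(aq/\rho\sigma)_{m-k}$ and to $1/(q)_{m-k}$. The powers of $q$ and the signs produced by the reversal cancel, and crucially the factor $(aq/\rho\sigma)^k$ combines with the $(\rho\sigma/a)^k$ coming from $(aq/\rho\sigma)_{m-k}$ to leave simply $q^k$. After pulling out the $k$-independent part, $S_i$ becomes a constant times the terminating sum
\begin{equation*}
\sum_{k=0}^{m}\frac{(\rho q^i,\sigma q^i,q^{-m})_k}{(q,aq^{2i+1},\rho\sigma q^{-m}/a)_k}\,q^k.
\end{equation*}
This series is balanced, since $q\cdot\rho q^i\cdot\sigma q^i\cdot q^{-m}=aq^{2i+1}\cdot\rho\sigma q^{-m}/a=\rho\sigma q^{2i+1-m}$, so the $q$-Pfaff--Saalsch\"utz summation~\cite{GR} applies and evaluates it to $(aq^{i+1}/\rho,aq^{i+1}/\sigma)_m/\bigl((aq^{2i+1},aq/\rho\sigma)_m\bigr)$. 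I expect this reduction---tracking the reversal identities and confirming the balancing condition---to be the only delicate point.

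It then remains to reassemble and simplify. Collecting the constant prefactor with the summed value, and using $(aq)_{2i}(aq^{2i+1})_{n-i}=(aq)_{n+i}$ together with the cancellation of $(aq/\rho\sigma)_m$, one finds
\begin{equation*}
\frac{S_i}{(aq/\rho,aq/\sigma)_n}=\frac{(\rho,\sigma)_i\,(aq/\rho\sigma)^i\,(aq^{i+1}/\rho,aq^{i+1}/\sigma)_{n-i}}{(aq/\rho,aq/\sigma)_n\,(q)_{n-i}\,(aq)_{n+i}}.
\end{equation*}
Finally the splittings $(aq/\rho)_n=(aq/\rho)_i(aq^{i+1}/\rho)_{n-i}$ and $(aq/\sigma)_n=(aq/\sigma)_i(aq^{i+1}/\sigma)_{n-i}$ reduce the $\rho,\sigma$ factors to $1/\bigl((aq/\rho,aq/\sigma)_i\bigr)$, so that the coefficient of $\alpha_i$ becomes exactly $\alpha'_i/\bigl(\alpha_i(q)_{n-i}(aq)_{n+i}\bigr)$ with $\alpha'_i$ as stated. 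This establishes $\beta'_n=\sum_{i=0}^n\alpha'_i/\bigl((q)_{n-i}(aq)_{n+i}\bigr)$, i.e.\ that $(\alpha'_n,\beta'_n)$ is a Bailey pair relative to $a$.
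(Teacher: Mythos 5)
Your proposal is correct, and it is precisely the classical argument the paper points to: the paper gives no written proof of Theorem~\ref{thm:baileylemma} but notes that it ``only requires the $q$-analogue of the Pfaff--Saalsch\"utz formula,'' which is exactly your route (substitute~\eqref{bp}, interchange the finite sums, reverse the inner sum to a balanced terminating $_3\phi_2$, and sum it by $q$-Pfaff--Saalsch\"utz). All the details check out, including the balancing condition, the cancellation of signs and powers of $q$ from the reversal, and the final recombination via $(aq)_{2i}\bigl(aq^{2i+1}\bigr)_{n-i}=(aq)_{n+i}$.
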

Despite its quite elementary proof, as it only requires the $q$-analogue of the Pfaff--Saalsch\"utz formula (see~\cite[formula~(II.12)]{GR}), which is itself consequence of the $q$-binomial theorem (or can alternatively be proved elementarily by induction), it yields many formulas in $q$-series, some of which are highly non trivial. For instance, in~\cite[equations (2.12) and (2.13)]{A1}, the following unit Bailey pair (relative to $a$) is considered (proving that it is indeed a Bailey pair is elementary, it can be done either directly or by inverting the relation~\eqref{bp})
\begin{equation}\label{ubp}
\alpha_n=(-1)^nq^{\binom{n}{2}}\frac{1-aq^{2n}}{1-a}\frac{(a)_n}{(q)_n},\qquad \beta_n=\delta_{n,0}.
\end{equation}
Applying Theorem~\ref{thm:baileylemma} twice to the unit Bailey pair~\eqref{ubp} yields a simple proof of the famous Rogers--Ramanujan identities \cite{RR19}.

\begin{Theorem}[Rogers--Ramanujan identities]\label{th:RR}
Let $i=0$ or $1$. Then
\begin{equation*}
 \sum_{n \geq 0} \frac{q^{n^2+ (1-i)n}}{(q)_n} = \frac{1}{\bigl(q^{2-i},q^{3+i};q^5\bigr)_{\infty}}.
\end{equation*}
\end{Theorem}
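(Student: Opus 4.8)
The plan is to apply Theorem~\ref{thm:baileylemma} twice to the unit Bailey pair~\eqref{ubp}, following the route indicated in the text, and then take a limit to collapse the extra parameters. Starting from a Bailey pair relative to $a$, a single application of the Bailey lemma introduces parameters $\rho,\sigma$ and produces a new pair; iterating once more introduces a second pair $\rho',\sigma'$. The strategy is to send all four of these parameters to infinity so that the factors $(\rho,\sigma)_n(aq/\rho\sigma)^n/(aq/\rho,aq/\sigma)_n$ degenerate into a clean power of $q$ times a Gaussian $q^{\binom{n}{2}}$-type factor, which is exactly what manufactures the $q^{n^2}$ in the Rogers--Ramanujan sum.

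Concretely, I would first record what happens to $\alpha_n$ and $\beta_n$ under one application with $\rho,\sigma\to\infty$. In that limit $(\rho,\sigma)_j(aq/\rho\sigma)^j\to a^jq^j q^{2\binom{j}{2}}=a^jq^{j^2}$ (up to the precise bookkeeping of $q$-powers), and the denominators $(aq/\rho,aq/\sigma)_n\to1$, so the transformed $\beta'_n$ becomes a sum $\sum_j a^jq^{j^2}\beta_j/(q)_{n-j}$ and $\alpha'_n$ acquires a factor $a^nq^{n^2}$. Performing this twice on the unit pair, where $\beta_n=\delta_{n,0}$, makes the $\beta$-side telescope: after the first step $\beta'_n$ is a single-term-driven sum, and after the second step one is left with $\beta''_n=\sum$ over a single summation index of $a^jq^{j^2}/(q)_{n-j}$-type terms. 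Setting $a=1$ and letting $n\to\infty$ then turns $1/(q)_{n-j}$ into $1/(q)_\infty$ and yields the left-hand side $\sum_{n\ge0}q^{n^2+(1-i)n}/(q)_n$, with the choice $i=0$ or $i=1$ corresponding to whether one uses $a=1$ directly or shifts by a factor of $q$ (equivalently, a choice in how the last limit is taken).

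The other half of the argument is the product side. Here the input is the fully iterated $\alpha''_n$, which for the unit pair is an explicit signed sum $(-1)^nq^{\binom{n}{2}+2n^2}(1-q^{2n})/(1-q)\cdots$ evaluated at $a=1$. Plugging this into the defining Bailey relation~\eqref{bp} for $\beta''_n$ and letting $n\to\infty$ converts the identity into a bilateral-looking theta sum, and the key analytic step is to recognize the resulting series as a specialization of the Jacobi triple product. The triple product then collapses the alternating-sign $q$-quadratic sum into the infinite product $1/(q^{2-i},q^{3+i};q^5)_\infty$, with modulus $5$ emerging from the coefficient of $n^2$ built up by the two iterations.

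The main obstacle I anticipate is purely bookkeeping rather than conceptual: one must track the exact $q$-exponents and the shift between the two values $i=0,1$ carefully through two applications of the lemma and the passage to the limit, since a single misplaced $\binom{n}{2}$ versus $n^2$ will spoil the modulus-$5$ identity. The genuinely nontrivial classical input, namely the Jacobi triple product (and implicitly the $q$-Pfaff--Saalsch\"utz identity already hidden inside Theorem~\ref{thm:baileylemma}), does the real work on the product side; everything else is a disciplined limit computation.
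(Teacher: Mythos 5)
Your proposal is correct and follows exactly the route the paper itself indicates for Theorem~\ref{th:RR}: apply Theorem~\ref{thm:baileylemma} twice to the unit Bailey pair~\eqref{ubp} with $\rho,\sigma\to\infty$ (so $\beta''_n=\sum_{j=0}^n a^jq^{j^2}/\bigl((q)_{n-j}(q)_j\bigr)$ and $\alpha''_n=a^{2n}q^{2n^2}\alpha_n$), then let $n\to\infty$ in~\eqref{bp} and factorise the resulting theta sum by the Jacobi triple product~\eqref{jtp}. The only slip is a labelling one: the two cases come from the specializations $a=1$ (giving $i=1$, since $2j^2+\binom{j}{2}=\bigl(5j^2-j\bigr)/2$ yields $\bigl(q,q^4,q^5;q^5\bigr)_\infty/(q)_\infty$) and $a=q$ (giving $i=0$), not from ``how the last limit is taken.''
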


Iterating $r\geq 2$ times, this process yields the $i=1$ and $i=r$ special instances of the Andrews--Gordon identities \cite{A74}.

\begin{Theorem}[Andrews--Gordon identities]\label{th:AGseries}
Let $r \geq 2$ and $1 \leq i \leq r$ be two integers. We have
\begin{gather}\label{AG}
\sum_{s_1\geq\dots\geq s_{r-1}\geq0}\frac{q^{s_1^2+\dots+s_{r-1}^2+s_{i}+\dots+s_{r-1}}}{(q)_{s_1-s_2}\cdots(q)_{s_{r-2}-s_{r-1}}(q)_{s_{r-1}}}=\frac{\bigl(q^{2r+1},q^{i},q^{2r-i+1};q^{2r+1}\bigr)_\infty}{(q)_\infty}.
\end{gather}
\end{Theorem}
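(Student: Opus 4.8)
The plan is to realise the sum on the left of~\eqref{AG} as the limit of the $\beta$-sequence of an iterated Bailey pair, and to evaluate the corresponding $\alpha$-sum by the Jacobi triple product. First I would specialise Theorem~\ref{thm:baileylemma} by letting $\rho,\sigma\to\infty$: since $(\rho)_n\sim(-\rho)^nq^{\binom n2}$ and $(aq/\rho)_n,(aq/\sigma)_n\to1$, the new pair $(\alpha'_n,\beta'_n)$ collapses to the simpler Bailey pair relative to $a$ given by
\begin{equation*}
\alpha'_n=a^nq^{n^2}\alpha_n,\qquad \beta'_n=\sum_{j=0}^n\frac{a^jq^{j^2}}{(q)_{n-j}}\beta_j.
\end{equation*}
Starting from the unit Bailey pair~\eqref{ubp} (so $\beta_n=\delta_{n,0}$) and iterating this limiting step $r$ times produces a Bailey pair relative to $a$ whose $\beta$-sequence is the nested sum
\begin{equation*}
\beta^{(r)}_n=\sum_{n\geq n_{r-1}\geq\dots\geq n_1\geq0}\frac{a^{n_1+\dots+n_{r-1}}q^{n_1^2+\dots+n_{r-1}^2}}{(q)_{n-n_{r-1}}(q)_{n_{r-1}-n_{r-2}}\cdots(q)_{n_1}},
\end{equation*}
while $\alpha^{(r)}_n=a^{rn}q^{rn^2}\alpha_n$ with $\alpha_n$ as in~\eqref{ubp}.

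Next I would feed this pair back into the defining relation~\eqref{bp} and let $n\to\infty$. Since $(q)_{n-j}\to(q)_\infty$ and $(aq)_{n+j}\to(aq)_\infty$, this gives $\lim_{n\to\infty}\beta^{(r)}_n=\frac{1}{(q)_\infty(aq)_\infty}\sum_{j\geq0}\alpha^{(r)}_j$. For $a=1$ the surviving $\beta$-sum is, after relabelling $s_k=n_{r-k}$ and factoring out the $(q)_{n-n_{r-1}}\to(q)_\infty$ term, exactly $\frac1{(q)_\infty}$ times the left-hand side of~\eqref{AG} with $i=r$ (no linear terms); whereas for $a=q$ the factors $a^{n_1+\dots+n_{r-1}}=q^{n_1+\dots+n_{r-1}}$ supply the linear term $s_1+\dots+s_{r-1}$ on every variable and, after accounting for $(aq)_\infty=(q^2)_\infty$, give the $i=1$ instance. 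In both cases the factor $\frac{1-aq^{2j}}{1-a}(a)_j$ from~\eqref{ubp} specialises to $1+q^j$ (resp.\ $\frac{1-q^{2j+1}}{1-q}$), which lets one fold $\sum_{j\geq0}\alpha^{(r)}_j$ into a bilateral sum over $\Z$; the Jacobi triple product then evaluates it, and one checks it reproduces $\frac{(q^{2r+1},q^{i},q^{2r-i+1};q^{2r+1})_\infty}{(q)_\infty}$ for $i=1$ and $i=r$ respectively.

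The main obstacle is the general case $1<i<r$, where the linear term $s_i+\dots+s_{r-1}$ sits on the inner variables only. Since in the Bailey chain the base $a$ is frozen throughout, the factor $a^{n_1+\dots+n_{r-1}}$ can only place a linear term on all variables at once (as for $i=1$) or on none (as for $i=r$); it cannot distinguish inner from outer variables. To separate them one must switch the base during the iteration, performing the first $r-i$ steps relative to $a=q$ (creating the inner linear terms $s_i,\dots,s_{r-1}$) and the remaining steps relative to $a=1$. Passing from a Bailey pair relative to $a$ to one relative to $a/q$ is precisely a Bailey \emph{lattice} step rather than a Bailey \emph{chain} step, so the crux is to supply such a change-of-base lemma; this is exactly the device developed in the rest of the paper, which I would invoke here to interpolate between the two extreme cases and obtain~\eqref{AG} for all~$i$.
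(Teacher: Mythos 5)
Your proposal is correct and takes essentially the same route as the paper: your $\rho,\sigma\to\infty$ chain computation on the unit Bailey pair~\eqref{ubp} with $a=1$ and $a=q$ gives the $i=r$ and $i=1$ cases exactly as indicated after Theorem~\ref{thm:baileylemma}, and your mid-iteration change of base from $a=q$ to $a=1$ is precisely the lattice mechanism the paper packages into Corollary~\ref{coro:bilatbaileylattice} (applied to~\eqref{ubp} with $a=q$, then the Jacobi triple product~\eqref{jtp} and the replacement of $i$ by $i-1$), equivalently the $m=1$ case of Theorem~\ref{thm:mag}. The only piece left implicit in your sketch --- the two-term $\alpha$-combination created by the lattice step, which yields the factor $\bigl(1-a^{i+1}q^{2j(i+1)}\bigr)/\bigl(1-aq^{2j}\bigr)$ to be expanded as a geometric series before applying the triple product --- is exactly what that corollary supplies, so invoking it closes the gap for $1<i<r$ as you intend.
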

These identities are the analytic analogue of Gordon's partition theorem~\cite{Go}.

However it is not possible to prove the cases $1 < i < r$ of the Andrews--Gordon identities with only the Bailey chain. Thus the Bailey lattice was developed in~\cite{AAB} as a more general tool which enabled the authors to give a proof of the full Andrews--Gordon identities. The key point is to change the parameter $a$ to $a/q$ at some point before iterating the Bailey lemma, therefore providing a concept of Bailey lattice instead of the classical Bailey chain described above.

Here is the classical Bailey lattice proved in~\cite{AAB}. Its proof is only a little bit more involved than for Theorem~\ref{thm:baileylemma}, as it relies again on the above-mentioned $q$-Pfaff--Saalsch\"utz formula, together with the $q$-Chu--Vandermonde terminating $_2\phi_1$ summation~\cite[formula~(II.6)]{GR} and a~final mild division into two cases.

\begin{Theorem}[Bailey lattice]\label{thm:baileylattice}
If $(\alpha_n, \beta_n)$ is a Bailey pair relative to $a$, then $(\alpha'_n, \beta'_n)$ is a~Bailey pair relative to $a/q$, where
\begin{equation*}
\alpha'_0=\alpha_0, \qquad \alpha'_n=\frac{(\rho,\sigma)_n(a/\rho\sigma)^n}{(a/\rho,a/\sigma)_n}(1-a)\left(\frac{\alpha_n}{1-aq^{2n}}-\frac{aq^{2n-2}\alpha_{n-1}}{1-aq^{2n-2}}\right),
\end{equation*}
and
\begin{equation*}\beta'_n=\sum_{j=0}^n{(\rho,\sigma)_j(a/\rho\sigma)_{n-j}(a/\rho\sigma)^j\over (q)_{n-j}(a/\rho,a/\sigma)_n} \beta_j.
\end{equation*}
\end{Theorem}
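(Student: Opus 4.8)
The plan is to deduce Theorem~\ref{thm:baileylattice} from the Bailey lemma (Theorem~\ref{thm:baileylemma}) by factoring the passage from $a$ to $a/q$ into two independent steps: first a pure ``change of base'' that turns a Bailey pair relative to $a$ into one relative to $a/q$ without introducing the parameters $\rho,\sigma$, and then a single application of the ordinary Bailey lemma carried out at base $a/q$. Concretely, I would first isolate the following lemma: if $(\alpha_n,\beta_n)$ is a Bailey pair relative to $a$, then $(\hat\alpha_n,\beta_n)$ is a Bailey pair relative to $a/q$, where $\hat\alpha_0=\alpha_0$ and
\begin{equation*}
\hat\alpha_n=(1-a)\left(\frac{\alpha_n}{1-aq^{2n}}-\frac{aq^{2n-2}\alpha_{n-1}}{1-aq^{2n-2}}\right)\qquad(n\geq1).
\end{equation*}
Only the $\alpha$-sequence is modified here, by a two-term combination of consecutive entries; the $\beta$-sequence is left untouched.

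To prove this lemma I would verify the defining relation directly. Since $(a/q)q=a$, being a Bailey pair relative to $a/q$ means $\beta_n=\sum_{j=0}^n \hat\alpha_j/\bigl((q)_{n-j}(a)_{n+j}\bigr)$. Substituting the two-term expression for $\hat\alpha_j$ and collecting terms, a fixed $\alpha_k$ picks up a contribution from $\hat\alpha_k$ and from $\hat\alpha_{k+1}$, so its total coefficient equals
\begin{equation*}
\frac{1-a}{1-aq^{2k}}\left(\frac{1}{(q)_{n-k}(a)_{n+k}}-\frac{aq^{2k}}{(q)_{n-k-1}(a)_{n+k+1}}\right).
\end{equation*}
It then remains to check that this is exactly $1/\bigl((q)_{n-k}(aq)_{n+k}\bigr)$, i.e.\ the coefficient of $\alpha_k$ in the original relation for $\beta_n$ relative to $a$; since the two expansions of $\beta_n$ then agree coefficientwise, the lemma follows. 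This final check is a one-line rational identity: using the shifts $(a)_m=(1-a)(aq)_{m-1}$ and $(x)_m=(x)_{m-1}\bigl(1-xq^{m-1}\bigr)$ to reduce both fractions to the common denominator $(q)_{n-k}(aq)_{n+k}$, the numerator collapses via $(1-aq^{n+k})-aq^{2k}(1-q^{n-k})=1-aq^{2k}$, which cancels the prefactor. The boundary case $k=n$ is automatic, since $1/(q)_{-1}=0$ kills the second fraction.

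Finally, I would apply Theorem~\ref{thm:baileylemma} with $a$ replaced by $a/q$ to the pair $(\hat\alpha_n,\beta_n)$. As $bq=a$ when $b=a/q$, the transformed pair reads $\alpha'_n=\frac{(\rho,\sigma)_n(a/\rho\sigma)^n}{(a/\rho,a/\sigma)_n}\hat\alpha_n$ and $\beta'_n=\sum_{j=0}^n\frac{(\rho,\sigma)_j(a/\rho\sigma)_{n-j}(a/\rho\sigma)^j}{(q)_{n-j}(a/\rho,a/\sigma)_n}\beta_j$, which are precisely the sequences $\alpha'_n,\beta'_n$ of Theorem~\ref{thm:baileylattice} (the case $n=0$ giving $\alpha'_0=\alpha_0$), so the theorem follows. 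I expect the only genuine obstacle to be guessing the correct two-term shape of $\hat\alpha_n$ in the change-of-base lemma; once it is written down, its verification is the elementary identity above, and no $q$-Pfaff--Saalsch\"utz or $q$-Chu--Vandermonde input is needed beyond what already powers Theorem~\ref{thm:baileylemma}. A more computational alternative would instead substitute the Bailey relation for $\beta_j$ directly into $\beta'_n$, interchange the summations, and evaluate the resulting inner series, a terminating ${}_3\phi_2$ that is one step away from Saalsch\"utzian; there the real difficulty is summing this ``nearly balanced'' series, which is what forces the split into two cases and the combined use of $q$-Chu--Vandermonde and $q$-Pfaff--Saalsch\"utz.
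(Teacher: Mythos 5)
Your proposal is correct and takes essentially the same route as the paper: your change-of-base lemma is exactly McLaughlin's Lemma~\ref{lem:Mac} (the unilateral case of the paper's key Lemma~\ref{lem:key1}), verified by the very same numerator collapse $\bigl(1-aq^{n+k}\bigr)-aq^{2k}\bigl(1-q^{n-k}\bigr)=1-aq^{2k}$ that powers the paper's proof, followed by one application of Theorem~\ref{thm:baileylemma} with $a$ replaced by $a/q$. The only cosmetic difference is that the paper proves the bilateral statement and obtains the unilateral lattice by setting $\alpha_n=0$ for $n<0$, whereas you verify the unilateral case directly.
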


Alternatively, Andrews, Schilling and Warnaar showed in~\cite[Section~3]{ASW} that it is possible to prove~\eqref{AG} using the Bailey lemma and bypassing the Bailey lattice: actually, their method is related to what we present below. Indeed at some point they use computations which are equivalent to a special case of the inverse of Lemma~\ref{lem:Mac}, which is the $b=0$ case of Lovejoy's Lemma~\ref{lem:love} (these lemmas did not exist at the time).

Note that Paule~\cite{P} derives~\eqref{AG} from his weaker version of the Bailey lemma. In~\cite{BIS}, it is also explained how a change of base allows one to avoid using the Bailey lattice. Recently, McLaughlin~\cite{M} showed that~\eqref{AG} can be proved much more easily by combining the classical Bailey Lemma with a simple lemma (see also the result of Lovejoy~\cite[Lemma 2.2]{Lo22} which corresponds to the case $a=q$ of McLaughlin's result).
\begin{Lemma}[McLaughlin]
\label{lem:Mac}
If $(\alpha_n, \beta_n)$ is a Bailey pair relative to $a$, then $(\alpha'_n, \beta'_n)$ is a Bailey pair relative to $a/q$, where
\begin{equation*}
\alpha'_0=\alpha_0, \qquad \alpha'_n=(1-a)\left(\frac{\alpha_n}{1-aq^{2n}}-\frac{aq^{2n-2}\alpha_{n-1}}{1-aq^{2n-2}}\right), \qquad \beta'_n=\beta_n.
\end{equation*}
\end{Lemma}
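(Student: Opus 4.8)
Since $(\alpha'_n,\beta'_n)$ is to be a Bailey pair relative to $a/q$, the defining relation~\eqref{bp} for this pair involves $((a/q)q)_{n+j}=(a)_{n+j}$; as moreover $\beta'_n=\beta_n$, the whole claim reduces to the single $q$-series identity
\begin{equation*}
\sum_{j=0}^n\frac{\alpha'_j}{(q)_{n-j}(a)_{n+j}}=\sum_{j=0}^n\frac{\alpha_j}{(q)_{n-j}(aq)_{n+j}}\qquad\forall n\in\mathbb{N},
\end{equation*}
the right-hand side being $\beta_n$ by hypothesis. My plan is to substitute the definition of $\alpha'_j$ into the left-hand side and show that, after collecting coefficients, each $\alpha_k$ carries the same weight on both sides.

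First I would observe that, adopting the convention $\alpha_{-1}=0$, the two-term formula for $\alpha'_n$ is valid for all $n\ge 0$ and automatically yields $\alpha'_0=\alpha_0$ (the $\alpha_{-1}$ term drops and $1-aq^0=1-a$). Thus in the left-hand sum each $\alpha'_j$ splits into a contribution $\frac{(1-a)}{(q)_{n-j}(a)_{n+j}(1-aq^{2j})}\alpha_j$ and a contribution $-\frac{(1-a)aq^{2j-2}}{(q)_{n-j}(a)_{n+j}(1-aq^{2j-2})}\alpha_{j-1}$. Gathering the coefficient of a fixed $\alpha_k$ then combines the first type at $j=k$ with the second type at $j=k+1$, giving, for $0\le k\le n-1$,
\begin{equation*}
\frac{1-a}{1-aq^{2k}}\left(\frac{1}{(q)_{n-k}(a)_{n+k}}-\frac{aq^{2k}}{(q)_{n-k-1}(a)_{n+k+1}}\right),
\end{equation*}
while for $k=n$ only the first term survives (its partner would come from $j=n+1$, outside the sum).

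The main obstacle, and the only genuine computation, is the simplification inside the parentheses. Bringing the two fractions over the common denominator $(q)_{n-k}(a)_{n+k+1}$ via $(q)_{n-k}=(1-q^{n-k})(q)_{n-k-1}$ and $(a)_{n+k+1}=(1-aq^{n+k})(a)_{n+k}$, the numerator becomes $(1-aq^{n+k})-aq^{2k}(1-q^{n-k})$, in which the cross terms $\mp aq^{n+k}$ cancel and leave exactly $1-aq^{2k}$. This cancels the prefactor, so the coefficient of $\alpha_k$ reduces to $\frac{1-a}{(q)_{n-k}(a)_{n+k+1}}$, which equals $\frac{1}{(q)_{n-k}(aq)_{n+k}}$ by $(a)_{n+k+1}=(1-a)(aq)_{n+k}$; this is precisely the coefficient of $\alpha_k$ on the right-hand side. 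The boundary case $k=n$ is the same identity with the second fraction absent, and the edge case $k=0$ is recovered by setting $k=0$ above. Hence the two sides agree term by term, establishing the required Bailey relation relative to $a/q$.
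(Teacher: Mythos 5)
Your proof is correct and is essentially the paper's own argument: the paper proves the bilateral Lemma~\ref{lem:key1} by the same manoeuvre (shifting $j\to j+1$ in the $\alpha_{j-1}$ sum and simplifying the numerator $\bigl(1-aq^{n+j}\bigr)-aq^{2j}\bigl(1-q^{n-j}\bigr)=1-aq^{2j}$), of which Lemma~\ref{lem:Mac} is the unilateral specialisation via Remark~\ref{rk:bilatinversion}. Your coefficient-of-$\alpha_k$ bookkeeping, including the $k=n$ boundary handled by the vanishing factor $1-q^{n-k}$, is just a term-by-term rephrasing of that same computation.
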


In this paper, we will show, among other things, that this lemma and the Bailey lattice can be extended to bilateral versions.

As noted in \cite{BMS} and~\cite{J}, it is possible to define for all $n\in\mathbb{Z}$ a \emph{bilateral Bailey pair} $(\alpha_n, \beta_n)$ relative to $a$ by the relation
\begin{equation}\label{bbp}
\beta_n=\sum_{j\leq n}\frac{\alpha_j}{(q)_{n-j}(aq)_{n+j}}\qquad\forall n\in\mathbb{Z}.
\end{equation}

\begin{Remark}\label{rk:bilatinversion}
The relation~\eqref{bp} defining classical (unilateral) Bailey pairs is a special instance of the above relation defining bilateral ones, as choosing $\alpha_n=0$ for negative integers $n$ in~\eqref{bbp} implies $\beta_n=0$ for $n$ negative. Actually, the converse is also true, as the classical Bailey inversion holds for bilateral Bailey pairs: $(\alpha_n, \beta_n)$ is a bilateral Bailey pair relative to $a$ if and only if
\begin{equation}\label{bilatinversion}
\alpha_n=\frac{1-aq^{2n}}{1-a}\sum_{j\leq n}\frac{(a)_{n+j}}{(q)_{n-j}}(-1)^{n-j}q^{\binom{n-j}{2}}\beta_j\qquad\forall n\in\mathbb{Z}.
\end{equation}
\textit{Thus, from all our results in this paper, one can deduce the corresponding unilateral results by setting $\alpha_n=0$ $($or equivalently $\beta_n=0)$ for all $n<0$.}
\end{Remark}

In \cite{BMS}, the Bailey lemma is extended in the following way.
\begin{Theorem}[bilateral Bailey lemma]\label{thm:bilatbaileylemma}
If $(\alpha_n, \beta_n)$ is a bilateral Bailey pair relative to $a$, then
so is $(\alpha'_n, \beta'_n)$, where
\begin{equation*}\alpha'_n=\frac{(\rho,\sigma)_n(aq/\rho\sigma)^n}{
(aq/\rho,aq/\sigma)_n}\alpha_n\qquad\mbox{and}\qquad\beta'_n=\sum_{j\leq n}\frac{(\rho,\sigma)_j(aq/\rho\sigma)_{n-j}(aq/\rho\sigma)^j}{(q)_{n-j}(aq/\rho,aq/\sigma)_n}\beta_j,
\end{equation*}
subject to convergence conditions on the sequences $\alpha_n$ and $\beta_n$, which make the relevant infinite series absolutely convergent.
\end{Theorem}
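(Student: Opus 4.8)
The plan is to verify directly that the transformed pair $(\alpha'_n,\beta'_n)$ satisfies the defining relation \eqref{bbp} relative to $a$, i.e.\ that
\[
\beta'_n=\sum_{k\leq n}\frac{\alpha'_k}{(q)_{n-k}(aq)_{n+k}}
\]
holds for every $n\in\mathbb{Z}$. Starting from the given expression for $\beta'_n$ and substituting the bilateral Bailey relation $\beta_j=\sum_{k\leq j}\alpha_k/\bigl((q)_{j-k}(aq)_{j+k}\bigr)$, one obtains a double sum over the region $k\leq j\leq n$. I would then interchange the order of summation so that $k$ becomes the outer index (ranging over all $k\leq n$) and $j$ the inner one (ranging over the finite set $k\leq j\leq n$), pulling the factor $\alpha_k/(aq/\rho,aq/\sigma)_n$, which does not depend on $j$, outside the inner sum.

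The core computation is then the evaluation of the inner sum over $j$. After the shift $j=k+m$ with $m=0,\dots,n-k$, this finite sum rewrites as a terminating ${}_3\phi_2$ series, which is summed in closed form by the $q$-Pfaff--Saalsch\"utz formula \cite[formula~(II.12)]{GR}. Collecting the resulting factors produces exactly
\[
\frac{(\rho,\sigma)_k(aq/\rho\sigma)^k}{(aq/\rho,aq/\sigma)_k}\cdot\frac{(aq/\rho,aq/\sigma)_n}{(q)_{n-k}(aq)_{n+k}},
\]
so that after cancelling $(aq/\rho,aq/\sigma)_n$ against the prefactor one recognises the outer sum as $\sum_{k\leq n}\alpha'_k/\bigl((q)_{n-k}(aq)_{n+k}\bigr)$, as required. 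Crucially, this inner evaluation is word-for-word identical to the one in the classical Bailey lemma (Theorem~\ref{thm:baileylemma}): once $k$ is fixed the range $k\leq j\leq n$ is finite, so the summation is genuinely terminating and the sign or size of $k$ (positive, negative or zero) plays no role whatsoever.

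The only genuinely new point compared with the unilateral case, and the step I expect to require the most care, is the justification of the interchange of the two summations. In \eqref{bp} everything in sight is a finite sum and the rearrangement is automatic; here the summation over $k$ extends to $-\infty$, so swapping the order is legitimate only under the absolute convergence of the associated double series. This is precisely the content of the convergence hypotheses on $(\alpha_n)$ and $(\beta_n)$ in the statement: under them one applies Fubini's theorem for absolutely convergent double series to pass from $\sum_{j\leq n}\sum_{k\leq j}$ to $\sum_{k\leq n}\sum_{k\leq j\leq n}$, after which the finite inner evaluation above carries over unchanged from the unilateral proof.
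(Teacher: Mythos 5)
Your proposal is correct, and it follows essentially the route the paper intends: the paper does not prove Theorem~\ref{thm:bilatbaileylemma} itself but imports it from~\cite{BMS}, where the argument is exactly yours --- substitute~\eqref{bbp} into $\beta'_n$, interchange the double sum, and evaluate the terminating inner sum over $k\leq j\leq n$ by the $q$-Pfaff--Saalsch\"utz formula, just as in the unilateral Bailey lemma whose proof the paper sketches via~\cite[formula~(II.12)]{GR}. Your two key observations are the right ones: the inner evaluation is insensitive to the sign of $k$ (for fixed $n-k$ it is an identity of rational functions in $q^k$ and the parameters, with negative-index Pochhammer symbols handled by the definition $(a)_k=(a)_\infty/\bigl(aq^k\bigr)_\infty$ given in the paper), and the interchange of summation, automatic in the unilateral case, is precisely what the absolute-convergence hypothesis in the statement is there to license via Fubini.
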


Our first result is an extension of the Bailey lattice to the bilateral case.

\begin{Theorem}[bilateral Bailey lattice]\label{thm:bilatbaileylattice}
If $(\alpha_n, \beta_n)$ is a bilateral Bailey pair relative to $a$, then~$(\alpha'_n, \beta'_n)$ is a bilateral Bailey pair relative to $a/q$, where
\begin{equation*}\alpha'_n=\frac{(\rho,\sigma)_n(a/\rho\sigma)^n}{(a/\rho,a/\sigma)_n}(1-a)\left(\frac{\alpha_n}{1-aq^{2n}}-\frac{aq^{2n-2}\alpha_{n-1}}{1-aq^{2n-2}}\right),
\end{equation*}
and
\begin{equation*}\beta'_n=\sum_{j\leq n}{(\rho,\sigma)_j(a/\rho\sigma)_{n-j}(a/\rho\sigma)^j\over (q)_{n-j}(a/\rho,a/\sigma)_n} \beta_j,
\end{equation*}
subject to convergence conditions on the sequences $\alpha_n$ and $\beta_n$, which make the relevant infinite series absolutely convergent.
\end{Theorem}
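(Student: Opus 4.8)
The plan is to factor the transformation into two independent steps, the second of which is the bilateral Bailey lemma already at our disposal. First I would establish a bilateral analogue of McLaughlin's Lemma~\ref{lem:Mac}: if $(\alpha_n,\beta_n)$ is a bilateral Bailey pair relative to $a$, then $(\tilde\alpha_n,\beta_n)$ is a bilateral Bailey pair relative to $a/q$, where
\[
\tilde\alpha_n=(1-a)\left(\frac{\alpha_n}{1-aq^{2n}}-\frac{aq^{2n-2}\alpha_{n-1}}{1-aq^{2n-2}}\right)
\]
for every $n\in\Z$ (the unilateral normalisation $\alpha'_0=\alpha_0$ is automatic here, since $\alpha_{-1}=0$ under the reduction of Remark~\ref{rk:bilatinversion}). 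Granting this, I would apply the bilateral Bailey lemma (Theorem~\ref{thm:bilatbaileylemma}) to the pair $(\tilde\alpha_n,\beta_n)$ with $a$ replaced by $a/q$. Since $(a/q)q=a$, the factors $(aq/\rho,aq/\sigma)$ and $aq/\rho\sigma$ appearing there become $(a/\rho,a/\sigma)$ and $a/\rho\sigma$, and one checks directly that the output $\alpha'_n=\frac{(\rho,\sigma)_n(a/\rho\sigma)^n}{(a/\rho,a/\sigma)_n}\tilde\alpha_n$ and $\beta'_n=\sum_{j\leq n}\frac{(\rho,\sigma)_j(a/\rho\sigma)_{n-j}(a/\rho\sigma)^j}{(q)_{n-j}(a/\rho,a/\sigma)_n}\beta_j$ are exactly the sequences in the statement (using $\tilde\beta_j=\beta_j$). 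This reduces everything to the bilateral McLaughlin step.

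To prove that step I would verify the defining relation~\eqref{bbp} relative to $a/q$ directly, i.e.\ show $\sum_{j\leq n}\tilde\alpha_j/\bigl((q)_{n-j}(a)_{n+j}\bigr)=\beta_n$, and match it against $\beta_n=\sum_{j\leq n}\alpha_j/\bigl((q)_{n-j}(aq)_{n+j}\bigr)$. Substituting $\tilde\alpha_j$, I would split the left-hand side into two sums, shift the index $j\mapsto j+1$ in the one carrying $\alpha_{j-1}$, and collect the coefficient of each $\alpha_j$. For $j\leq n-1$ the two contributions combine, after factoring out $1/\bigl((q)_{n-j-1}(a)_{n+j}\bigr)$, into the bracket
\[
\frac{1}{1-q^{n-j}}-\frac{aq^{2j}}{1-aq^{n+j}}=\frac{1-aq^{2j}}{(1-q^{n-j})(1-aq^{n+j})},
\]
whose numerator cancels the factor $1-aq^{2j}$ brought in by $\tilde\alpha_j$; invoking $(aq)_{n+j}=(a)_{n+j+1}/(1-a)$ then collapses the coefficient of $\alpha_j$ to $1/\bigl((q)_{n-j}(aq)_{n+j}\bigr)$. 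The boundary index $j=n$ receives a single contribution and reduces to the same expression via $(a)_{2n}(1-aq^{2n})=(a)_{2n+1}$. Thus the left-hand side equals $\beta_n$ coefficient by coefficient in the $\alpha_j$, which is the claim.

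These manipulations are elementary, so the only genuine subtlety---and the point I expect to require care---is convergence. Unlike the unilateral case, the sums extend down to $-\infty$, so the index shift and the termwise recombination of the two series are legitimate only once absolute convergence is granted, which is precisely the hypothesis attached to the statement and to Theorem~\ref{thm:bilatbaileylemma}. Under that assumption the rearrangements are valid and the two-step composition yields the theorem. I would close by noting, in the spirit of Remark~\ref{rk:bilatinversion}, that setting $\alpha_n=0$ for $n<0$ recovers the classical Bailey lattice of Theorem~\ref{thm:baileylattice}, with the bilateral McLaughlin step specialising to Lemma~\ref{lem:Mac}.
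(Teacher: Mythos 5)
Your proposal is correct and follows essentially the same route as the paper: you rediscover its key Lemma~\ref{lem:key1} (the bilateral analogue of McLaughlin's Lemma~\ref{lem:Mac}, proved by the same split--shift--recombine computation, with your explicit $j=n$ boundary case handled in the paper implicitly by the vanishing factor $1-q^{n-j}$) and then compose it with Theorem~\ref{thm:bilatbaileylemma} applied with $a$ replaced by $a/q$, exactly as the paper does. Your attention to the absolute-convergence hypothesis justifying the index shift matches the paper's standing assumption as well.
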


As mentioned above, several proofs have been given for the (unilateral) Bailey lattice. On the other hand, simpler proofs were given to prove the Andrews--Gordon identities without using the Bailey lattice. Here we give a very simple proof of our bilateral Bailey lattice, which when considering Bailey pairs such that $\alpha_n=0$ for $n<0$ reduces to the unilateral Bailey lattice. Hence we provide in particular a very simple proof of the classical Bailey lattice.

The key in our proof is the following simple lemma, which generalises McLaughlin's unilateral Lemma \ref{lem:Mac} and transforms bilateral Bailey pairs relative to $a$ into bilateral Bailey pairs relative to $a/q$.
\begin{Lemma}[key Lemma~1]\label{lem:key1}
If $(\alpha_n, \beta_n)$ is a bilateral Bailey pair relative to $a$, then $(\alpha'_n, \beta'_n)$ is a bilateral Bailey pair relative to $a/q$, where
\begin{equation}\label{eq:baileylattice1}
\alpha'_n=(1-a)\left(\frac{\alpha_n}{1-aq^{2n}}-\frac{aq^{2n-2}\alpha_{n-1}}{1-aq^{2n-2}}\right), \qquad \beta'_n=\beta_n,
\end{equation}
subject to convergence conditions on the sequences $\alpha_n$ and $\beta_n$, which make the relevant infinite series absolutely convergent.
\end{Lemma}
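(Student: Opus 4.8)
The plan is to verify directly that the pair $(\alpha'_n,\beta'_n)$ satisfies the defining relation~\eqref{bbp} relative to $a/q$. Since $(a/q)q=a$ and $\beta'_n=\beta_n$, and since $(\alpha_n,\beta_n)$ is assumed to be a bilateral Bailey pair relative to $a$, this amounts to proving that
\[
\sum_{j\leq n}\frac{\alpha'_j}{(q)_{n-j}(a)_{n+j}}=\beta_n=\sum_{j\leq n}\frac{\alpha_j}{(q)_{n-j}(aq)_{n+j}}\qquad\forall n\in\Z.
\]
I would start by inserting the explicit form of $\alpha'_j$ into the left-hand side and splitting it into two sums, one carrying the $\alpha_j$ term and one carrying the $\alpha_{j-1}$ term.

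In the second sum I would shift the summation index $j\mapsto j+1$. This rearrangement is the one place where the bilateral setting differs genuinely from the unilateral one: there is no lower endpoint to track, but the shift must be justified by the absolute convergence hypothesis stated in the lemma. After the shift, both sums run over the same variable and I can collect the coefficient of each $\alpha_j/(1-aq^{2j})$ for $j\leq n-1$, while the term $j=n$ survives only from the first (unshifted) sum.

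The heart of the computation is the simplification of this combined coefficient. Using the recurrences $(q)_{n-j}=(1-q^{n-j})(q)_{n-j-1}$ and $(a)_{n+j+1}=(1-aq^{n+j})(a)_{n+j}$, the bracket
\[
\frac{1}{(q)_{n-j}(a)_{n+j}}-\frac{aq^{2j}}{(q)_{n-j-1}(a)_{n+j+1}}
\]
reduces, after clearing denominators, to a single fraction whose numerator collapses to $1-aq^{2j}$, the cross terms $\mp aq^{n+j}$ cancelling. This factor cancels exactly the $1/(1-aq^{2j})$ coming from $\alpha'_j$, and using $(a)_{n+j+1}=(1-a)(aq)_{n+j}$ the surviving summand is precisely $\alpha_j/\bigl((q)_{n-j}(aq)_{n+j}\bigr)$. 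The boundary term $j=n$ yields $(1-a)\alpha_n/\bigl((a)_{2n}(1-aq^{2n})\bigr)=\alpha_n/(aq)_{2n}$, which is the $j=n$ instance of the same generic expression, so no separate case is needed. Summing over $j\leq n$ therefore returns $\beta_n$, as required.

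The main obstacle I anticipate is not the algebra, which is short, but the bookkeeping around the reindexing: one must check that the absolute convergence assumption indeed licenses shifting the bilateral sum, and verify that the top term $j=n$ merges seamlessly with the generic term rather than producing a genuine boundary correction, as it would if a lower cutoff were present. This is precisely what makes the bilateral proof cleaner than the unilateral one, where the separate value $\alpha'_0=\alpha_0$ must be handled by hand. Once these points are settled, restricting to $\alpha_n=0$ for $n<0$ recovers McLaughlin's unilateral Lemma~\ref{lem:Mac}.
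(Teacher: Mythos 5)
Your proposal is correct and follows essentially the same route as the paper's proof: insert $\alpha'_j$ into the relation~\eqref{bbp} relative to $a/q$, split into two sums, reindex the $\alpha_{j-1}$ sum, and collapse the combined numerator $\bigl(1-aq^{n+j}\bigr)-aq^{2j}\bigl(1-q^{n-j}\bigr)$ to $1-aq^{2j}$, finishing with $(a)_{n+j+1}=(1-a)(aq)_{n+j}$. The only cosmetic difference is bookkeeping: the paper keeps the shifted sum over all $j\leq n$ by inserting the factor $1-q^{n-j}$ (which vanishes at $j=n$), whereas you isolate the $j=n$ term and then check it merges with the generic summand.
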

While it is not customary to do so, we give the proof in this introduction to show that it is just a few lines long and only requires the definition of bilateral Bailey pairs and elementary sum manipulations which are similar to the ones for the unilateral version.
\begin{proof}[Proof of Lemma~\ref{lem:key1}]
For all $n\in\mathbb{Z}$, we have
\begin{align*}
\sum_{j\leq n} \frac{\alpha'_j}{(q)_{n-j}(a)_{n+j}}&= \sum_{j\leq n} \frac{(1-a)}{(q)_{n-j}(a)_{n+j}}\left(\frac{\alpha_j}{1-aq^{2j}}-\frac{aq^{2j-2}\alpha_{j-1}}{1-aq^{2j-2}}\right)\\
&= \sum_{j\leq n} \frac{(1-a)\alpha_j}{(q)_{n-j}(a)_{n+j}\bigl(1-aq^{2j}\bigr)}-\sum_{j\leq n} \frac{(1-a)\bigl(1-q^{n-j}\bigr)aq^{2j}\alpha_{j}}{(q)_{n-j}(a)_{n+j+1}\bigl(1-aq^{2j}\bigr)}\\
&=\sum_{j\leq n} \frac{(1-a)\alpha_j}{(q)_{n-j}(a)_{n+j+1}\bigl(1-aq^{2j}\bigr)}\bigl(\bigl(1-aq^{n+j}\bigr)-aq^{2j}\bigl(1-q^{n-j}\bigr)\bigr)\\
&=\sum_{j\leq n} \frac{\alpha_j}{(q)_{n-j}(aq)_{n+j}}=\beta_n=\beta'_n,
\end{align*}
which is the desired result by~\eqref{bbp}.
\end{proof}
With this lemma, we can give an extremely simple proof of the bilateral Bailey lattice.
\begin{proof}[Proof of the bilateral Bailey lattice]
Start from a bilateral Bailey pair $(\alpha_n, \beta_n)$ relative to~$a$. Then apply Lemma~\ref{lem:key1} to obtain a bilateral Bailey pair $\bigl(\tilde{\alpha}_n, \tilde{\beta}_n\bigr)$ relative to $a/q$ and satisfying~\eqref{eq:baileylattice1}. Applying the bilateral Bailey lemma (see Theorem~\ref{thm:bilatbaileylemma}) to \smash{$\bigl(\tilde{\alpha}_n, \tilde{\beta}_n\bigr)$} with $a$ replaced by~$a/q$ gives the desired bilateral Bailey pair $(\alpha'_n, \beta'_n)$ relative to $a/q$.
\end{proof}

In addition to Lemma~\ref{lem:key1}, let us give a similarly simple lemma whose unilateral version is also due to McLaughlin in~\cite[Lemma~13.1\,(2)]{M} (see also Lovejoy \cite[Lemma 3.1]{Lo22}), and whose proof is very similar to the one of Lemma~\ref{lem:key1}.

\begin{Lemma}[key Lemma~2]\label{lem:key2}
If $(\alpha_n, \beta_n)$ is a bilateral Bailey pair relative to $a$, then $(\alpha'_n, \beta'_n)$ is a bilateral Bailey pair relative to $a/q$, where
\[%\label{eq:baileylattice2}
\alpha'_n=(1-a)\left(\frac{q^n\alpha_n}{1-aq^{2n}}-\frac{q^{n-1}\alpha_{n-1}}{1-aq^{2n-2}}\right), \qquad \beta'_n=q^n\beta_n,
\]
subject to convergence conditions on the sequences $\alpha_n$ and $\beta_n$, which make the relevant infinite series absolutely convergent.
\end{Lemma}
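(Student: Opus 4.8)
The plan is to follow exactly the template of the proof of Lemma~\ref{lem:key1}: I start from the defining relation~\eqref{bbp} for a bilateral Bailey pair relative to $a/q$ and verify it directly. Since relative to $a/q$ the factor $(aq)_{n+j}$ in~\eqref{bbp} becomes $(a)_{n+j}$, and since the target right-hand side is $\beta'_n=q^n\beta_n$, I must show that for all $n\in\mathbb{Z}$,
\begin{equation*}
\sum_{j\leq n}\frac{\alpha'_j}{(q)_{n-j}(a)_{n+j}}=q^n\beta_n.
\end{equation*}

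First I would insert the definition of $\alpha'_n$ and split the expression into two sums, coming respectively from $\frac{q^j\alpha_j}{1-aq^{2j}}$ and from $\frac{q^{j-1}\alpha_{j-1}}{1-aq^{2j-2}}$. In the second sum I shift the index $j\mapsto j+1$ so that both sums involve $\alpha_j$ together with the factor $1-aq^{2j}$; this replaces $(q)_{n-j}$ by $(q)_{n-j-1}$ and $(a)_{n+j}$ by $(a)_{n+j+1}$. Writing $(q)_{n-j-1}=(q)_{n-j}/(1-q^{n-j})$ produces a factor $1-q^{n-j}$ in the numerator, and since it vanishes at $j=n$, the shifted sum can harmlessly be taken again over all $j\leq n$, matching the range of the first sum. (Under the stated convergence conditions this index shift of an absolutely convergent bilateral series is legitimate.)

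Next I would bring both sums to the common denominator $(q)_{n-j}(a)_{n+j+1}\bigl(1-aq^{2j}\bigr)$, using $(a)_{n+j+1}=(a)_{n+j}\bigl(1-aq^{n+j}\bigr)$ on the first sum. The resulting single sum then carries the bracket
\begin{equation*}
\bigl(1-aq^{n+j}\bigr)-\bigl(1-q^{n-j}\bigr)=q^{n-j}\bigl(1-aq^{2j}\bigr).
\end{equation*}
This identity is the crux of the argument and the step I expect to require the most care: everything hinges on the two ``$1$''s cancelling and the remaining terms $q^{n-j}-aq^{n+j}$ factoring as $q^{n-j}\bigl(1-aq^{2j}\bigr)$, which is precisely what cancels the troublesome $1-aq^{2j}$ in the denominator. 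This is the analogue of the bracket $\bigl(1-aq^{n+j}\bigr)-aq^{2j}\bigl(1-q^{n-j}\bigr)=1-aq^{2j}$ in the proof of Lemma~\ref{lem:key1}, the only difference being that here an extra factor $q^{n-j}$ survives.

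Finally, after cancelling $1-aq^{2j}$, I am left with $\sum_{j\leq n}\frac{(1-a)q^j q^{n-j}\alpha_j}{(q)_{n-j}(a)_{n+j+1}}$. Using $(a)_{n+j+1}=(1-a)(aq)_{n+j}$ to cancel the $(1-a)$ factor, and combining $q^j q^{n-j}=q^n$ to pull $q^n$ out of the sum, this collapses to $q^n\sum_{j\leq n}\frac{\alpha_j}{(q)_{n-j}(aq)_{n+j}}=q^n\beta_n$ by~\eqref{bbp}, which is the desired $\beta'_n$. Thus the verification is, as for Lemma~\ref{lem:key1}, only a few lines of elementary sum manipulation, the one genuinely delicate point being the bracket simplification that makes the whole expression telescope.
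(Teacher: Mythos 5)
Your proof is correct, and it is exactly the argument the paper intends: the authors omit the proof of Lemma~\ref{lem:key2}, stating only that it is ``very similar to the one of Lemma~\ref{lem:key1}'', and your verification is precisely that adaptation, with the index shift $j\mapsto j+1$, the extension of the range via the vanishing factor $1-q^{n-j}$, and the key bracket $\bigl(1-aq^{n+j}\bigr)-\bigl(1-q^{n-j}\bigr)=q^{n-j}\bigl(1-aq^{2j}\bigr)$ all checking out, including the final simplification $(a)_{n+j+1}=(1-a)(aq)_{n+j}$, which holds for all integers $n+j$ under the paper's convention for Pochhammer symbols with negative indices.
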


Using, as in the short proof of the bilateral Bailey lattice above, Lemma~\ref{lem:key2} followed by Theorem~\ref{thm:bilatbaileylemma} with $a$ replaced by $a/q$, we obtain the following new bilateral Bailey lattice, similar to Theorem~\ref{thm:bilatbaileylattice}. As far as we know, its unilateral version was also unknown until now.

\begin{Theorem}[new bilateral Bailey lattice]\label{thm:newbilatbaileylattice}
If $(\alpha_n, \beta_n)$ is a bilateral Bailey pair relative to~$a$, then $(\alpha'_n, \beta'_n)$ is a bilateral Bailey pair relative to $a/q$, where
\begin{equation*}\alpha'_n=\frac{(\rho,\sigma)_n(a/\rho\sigma)^n}{(a/\rho,a/\sigma)_n}(1-a)\left(\frac{q^n\alpha_n}{1-aq^{2n}}-\frac{q^{n-1}\alpha_{n-1}}{1-aq^{2n-2}}\right),
\end{equation*}
and
\begin{equation*}\beta'_n=\sum_{j\leq n}{(\rho,\sigma)_j(a/\rho\sigma)_{n-j}(a/\rho\sigma)^j\over (q)_{n-j}(a/\rho,a/\sigma)_n} q^j\beta_j,
\end{equation*}
subject to convergence conditions on the sequences $\alpha_n$ and $\beta_n$, which make the relevant infinite series absolutely convergent.
\end{Theorem}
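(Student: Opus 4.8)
The plan is to mirror the two-step argument that proved the bilateral Bailey lattice (Theorem~\ref{thm:bilatbaileylattice}): there, Lemma~\ref{lem:key1} lowered the base from $a$ to $a/q$ while fixing $\beta_n$, and the bilateral Bailey lemma then reintroduced the parameters $\rho,\sigma$. Since Lemma~\ref{lem:key2} plays exactly the same base-lowering role as Lemma~\ref{lem:key1}, differing only by trailing powers of $q$, I expect the identical composition to produce Theorem~\ref{thm:newbilatbaileylattice}. Concretely, I would start from a bilateral Bailey pair $(\alpha_n,\beta_n)$ relative to $a$, apply Lemma~\ref{lem:key2} to obtain a bilateral Bailey pair $\bigl(\tilde\alpha_n,\tilde\beta_n\bigr)$ relative to $a/q$ with
\[
\tilde\alpha_n=(1-a)\left(\frac{q^n\alpha_n}{1-aq^{2n}}-\frac{q^{n-1}\alpha_{n-1}}{1-aq^{2n-2}}\right),\qquad \tilde\beta_n=q^n\beta_n,
\]
and then apply the bilateral Bailey lemma (Theorem~\ref{thm:bilatbaileylemma}) to $\bigl(\tilde\alpha_n,\tilde\beta_n\bigr)$, replacing the base $a$ by $a/q$ throughout. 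As the bilateral Bailey lemma preserves the base, the resulting pair $(\alpha'_n,\beta'_n)$ is again relative to $a/q$, as required.

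The only bookkeeping is to confirm that the two successive transformations assemble into the stated closed forms. Under the replacement $a\mapsto a/q$ in Theorem~\ref{thm:bilatbaileylemma}, the three recurring combinations collapse: $aq/\rho\sigma\mapsto a/\rho\sigma$, $aq/\rho\mapsto a/\rho$ and $aq/\sigma\mapsto a/\sigma$. Hence the $\alpha$-part of the bilateral Bailey lemma multiplies $\tilde\alpha_n$ by $\dfrac{(\rho,\sigma)_n(a/\rho\sigma)^n}{(a/\rho,a/\sigma)_n}$, and substituting the expression for $\tilde\alpha_n$ gives precisely the claimed $\alpha'_n$. Likewise the $\beta$-part carries $\tilde\beta_j=q^j\beta_j$ into
\[
\beta'_n=\sum_{j\leq n}\frac{(\rho,\sigma)_j(a/\rho\sigma)_{n-j}(a/\rho\sigma)^j}{(q)_{n-j}(a/\rho,a/\sigma)_n}\,q^j\beta_j,
\]
which is the stated $\beta'_n$. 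This step is purely mechanical and presents no obstacle.

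The substantive content lies entirely in Lemma~\ref{lem:key2}, which I am entitled to cite here; but if one wished to make the theorem self-contained I would prove it exactly as Lemma~\ref{lem:key1} is proved above. One inserts $\alpha'_j$ into the defining bilateral sum $\sum_{j\leq n}\alpha'_j/\bigl((q)_{n-j}(a)_{n+j}\bigr)$ at base $a/q$, splits off the two telescoping contributions, and recombines them using $(a)_{n+j+1}=(1-a)(aq)_{n+j}$ together with $(q)_{n-j}=(1-q^{n-j})(q)_{n-j-1}$. The feature distinguishing this from Lemma~\ref{lem:key1} is the trailing power of $q$: after the cancellation the two sums combine with a factor $q^{j}\cdot q^{n-j}=q^{n}$, so the expression collapses to $q^n\sum_{j\leq n}\alpha_j/\bigl((q)_{n-j}(aq)_{n+j}\bigr)=q^n\beta_n$, matching $\tilde\beta_n$. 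The main point to watch throughout is that all the bilateral series are absolutely convergent, so that the index shift in the telescoping sum and the term-by-term recombination are legitimate; this is exactly the convergence hypothesis carried in the statement.
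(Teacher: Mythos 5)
Your proposal is correct and is precisely the paper's own proof: the authors obtain Theorem~\ref{thm:newbilatbaileylattice} by applying Lemma~\ref{lem:key2} and then the bilateral Bailey lemma (Theorem~\ref{thm:bilatbaileylemma}) with $a$ replaced by $a/q$, exactly as you do, and your bookkeeping of the substitutions $aq/\rho\sigma\mapsto a/\rho\sigma$, etc., is accurate. Your optional verification of Lemma~\ref{lem:key2} (the bracket collapsing to $q^{n-j}\bigl(1-aq^{2j}\bigr)$, yielding the overall factor $q^j\cdot q^{n-j}=q^n$) is also correct and matches the telescoping argument the paper gives for Lemma~\ref{lem:key1} and says carries over.
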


Lemmas \ref{lem:key1} and \ref{lem:key2} can be generalised by adding an extra parameter $b$.
\begin{Lemma}[general lemma]\label{lem:baileylattices}
If $(\alpha_n, \beta_n)$ is a bilateral Bailey pair relative to $a$, then $(\alpha'_n, \beta'_n)$ is a bilateral Bailey pair relative to $a/q$, where
\[%\label{eq:baileylatticesalpha}
\alpha'_n=\frac{1-a}{1-b}\left(\frac{\bigl(1-bq^n\bigr) \alpha_n}{1-aq^{2n}}-\frac{q^{n-1}\bigl(aq^{n-1}-b\bigr) \alpha_{n-1}}{1-aq^{2n-2}}\right)
\qquad \text{and} \qquad \beta'_n=\frac{1-bq^n}{1-b}\beta_n.
\]
\end{Lemma}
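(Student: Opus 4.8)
The plan is to mimic the proof of Lemma~\ref{lem:key1} almost verbatim, the only novelty being one slightly longer algebraic simplification. By the definition~\eqref{bbp} of a bilateral Bailey pair relative to $a/q$ (replacing $a$ by $a/q$ turns $(aq)_{n+j}$ into $(a)_{n+j}$), it suffices to prove that for all $n\in\mathbb{Z}$,
\[
\sum_{j\leq n}\frac{\alpha'_j}{(q)_{n-j}(a)_{n+j}}=\frac{1-bq^n}{1-b}\,\beta_n=\beta'_n.
\]

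First I would substitute the expression for $\alpha'_j$ and split the left-hand side into two sums according to the two terms of $\alpha'_j$. In the second sum the summand involves $\alpha_{j-1}$, so I would shift the summation index via $j\mapsto j+1$. Using $(q)_{n-j-1}=(q)_{n-j}/\bigl(1-q^{n-j}\bigr)$, this produces a factor $\bigl(1-q^{n-j}\bigr)$ which vanishes at $j=n$; this is precisely the mechanism that lets me keep the summation range $j\leq n$ in both sums, exactly as in the proof of Lemma~\ref{lem:key1}. I would then bring the two sums to the common denominator $(a)_{n+j+1}=(a)_{n+j}\bigl(1-aq^{n+j}\bigr)$ by multiplying the numerator of the first sum by $\bigl(1-aq^{n+j}\bigr)$.

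The main (and essentially only) obstacle is the resulting bracket
\[
B:=\bigl(1-bq^j\bigr)\bigl(1-aq^{n+j}\bigr)-q^{j}\bigl(aq^{j}-b\bigr)\bigl(1-q^{n-j}\bigr),
\]
which replaces the simpler bracket $\bigl(1-aq^{n+j}\bigr)-aq^{2j}\bigl(1-q^{n-j}\bigr)=1-aq^{2j}$ occurring in the $b=0$ case. Expanding $B$, the terms $\pm aq^{n+j}$ and $\pm bq^j$ cancel pairwise and one is left with $B=1-aq^{2j}-bq^n+abq^{n+2j}$, which factors as $B=\bigl(1-aq^{2j}\bigr)\bigl(1-bq^n\bigr)$. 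I expect this factorisation to be the crux: it is what simultaneously cancels the spurious factor $1-aq^{2j}$ and produces the $j$-independent factor $1-bq^n$.

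Finally, after cancelling $1-aq^{2j}$ and pulling the constant $1-bq^n$ out of the sum, I would use $(a)_{n+j+1}=(1-a)(aq)_{n+j}$ to absorb the prefactor $1-a$, reducing the left-hand side to
\[
\frac{1-bq^n}{1-b}\sum_{j\leq n}\frac{\alpha_j}{(q)_{n-j}(aq)_{n+j}}=\frac{1-bq^n}{1-b}\,\beta_n=\beta'_n,
\]
by the hypothesis that $(\alpha_n,\beta_n)$ is a bilateral Bailey pair relative to $a$, which completes the argument. As in Lemmas~\ref{lem:key1} and~\ref{lem:key2}, all manipulations are valid subject to the absolute convergence of the bilateral series involved; and specialising $b=0$ (respectively letting $b\to\infty$) recovers Lemma~\ref{lem:key1} (respectively Lemma~\ref{lem:key2}), which serves as a useful consistency check on the signs and powers of $q$.
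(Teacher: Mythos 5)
Your computation is correct: substituting $\alpha'_j$, shifting $j\mapsto j+1$ in the second sum, extending its range back to $j\leq n$ thanks to the vanishing factor $1-q^{n-j}$, the factorisation $B=\bigl(1-bq^j\bigr)\bigl(1-aq^{n+j}\bigr)-q^{j}\bigl(aq^{j}-b\bigr)\bigl(1-q^{n-j}\bigr)=\bigl(1-aq^{2j}\bigr)\bigl(1-bq^n\bigr)$, and the final absorption $(a)_{n+j+1}=(1-a)(aq)_{n+j}$ all check out, and the argument delivers exactly $\beta'_n=\frac{1-bq^n}{1-b}\beta_n$. The paper, however, proves the lemma by a different and softer route: it observes that the pair of Lemma~\ref{lem:baileylattices} is $\frac{1}{1-b}$ times the pair of Lemma~\ref{lem:key1} minus $\frac{b}{1-b}$ times the pair of Lemma~\ref{lem:key2} (indeed $\frac{1}{1-b}-\frac{b}{1-b}q^n=\frac{1-bq^n}{1-b}$ and $\frac{aq^{2n-2}-bq^{n-1}}{1-b}=\frac{q^{n-1}\left(aq^{n-1}-b\right)}{1-b}$), and since the defining relation~\eqref{bbp} is linear in $(\alpha_n,\beta_n)$, being a bilateral Bailey pair is stable under linear combinations and the lemma follows with no new computation; the paper only remarks, without carrying it out, that a direct proof in the style of Lemma~\ref{lem:key1} is also possible, which is precisely what you did. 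The trade-off is clear: the paper's argument is essentially free once the two key lemmas are established and makes transparent the equivalence (noted in the paper) between the general lemma and the two special cases taken together, whereas your computation is self-contained, never invokes Lemma~\ref{lem:key2}, and isolates the crucial factorisation $B=\bigl(1-aq^{2j}\bigr)\bigl(1-bq^n\bigr)$ -- so your concluding consistency check at $b=0$ and $b\to\infty$ is in effect a simultaneous re-derivation of both key lemmas from a single calculation.
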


\begin{Remark}
Lemma \ref{lem:key1} is the case $b=0$ and Lemma \ref{lem:key2} is the case $b \rightarrow \infty$ of Lemma \ref{lem:baileylattices}.
\end{Remark}

\begin{Remark}
While at first glance Lemma \ref{lem:baileylattices} seems more general than Lemmas \ref{lem:key1} and~\ref{lem:key2}, it is actually equivalent to these two lemmas taken together. Indeed, the bilateral Bailey pair in Lemma \ref{lem:baileylattices} is equal to $1/(1-b)$ times the bilateral Bailey pair of Lemma \ref{lem:key1} minus~${b/(1-b)}$ times the bilateral Bailey pair of Lemma \ref{lem:key2}. Using the fact that being a bilateral Bailey pair is stable under linear combination, Lemmas \ref{lem:key1} and \ref{lem:key2} imply Lemma \ref{lem:baileylattices}. Note that it is also possible to prove Lemma \ref{lem:baileylattices} directly with a similar method to the proof of Lemma \ref{lem:key1}.
\end{Remark}

Despite following from Lemmas \ref{lem:key1} and \ref{lem:key2}, this general Lemma \ref{lem:baileylattices} is still interesting as it provides in the unilateral case an ``inverse" to Lovejoy's Lemma 2.3 of \cite{Lo22}, which he first stated in \cite[equations (2.4) and (2.5)]{Lo04}. Actually, this result inspired our discovery of Lemma~\ref{lem:baileylattices}.

\begin{Lemma}[Lovejoy]\label{lem:love}
If $(\alpha_n, \beta_n)$ is a Bailey pair relative to $a$, then $(\alpha'_n, \beta'_n)$ is a Bailey pair relative to $aq$, where
\begin{equation*}
\alpha'_n= \frac{\bigl(1-aq^{2n+1}\bigr)(aq/b)_n(-b)^nq^{n(n-1)/2}}{(1-aq)(bq)_n} \sum_{r=0}^n \frac{(b)_r}{(aq/b)_r}(-b)^{-r}q^{-r(r-1)/2}\alpha_r,
\end{equation*}
and
\[
\beta'_n=\frac{1-b}{1-bq^n}\beta_n.
\]
\end{Lemma}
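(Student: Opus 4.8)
The plan is to avoid verifying the defining relation of a Bailey pair relative to $aq$ by a direct summation, and instead to recognise Lovejoy's transformation as the \emph{inverse} of the general Lemma~\ref{lem:baileylattices} read with $a$ replaced by $aq$. I work throughout in the unilateral setting, which by Remark~\ref{rk:bilatinversion} is the case $\alpha_n=0$ for $n<0$ of the bilateral one, so that Lemma~\ref{lem:baileylattices} applies. Write $L$ for the map sending $(\alpha_n,\beta_n)$ to the pair $(\alpha'_n,\beta'_n)$ of Lemma~\ref{lem:love}, and $G$ for the map of Lemma~\ref{lem:baileylattices} with $a$ replaced by $aq$, i.e.\ $G$ sends $(\hat\alpha_n,\hat\beta_n)$ to the pair with
\begin{equation*}
\alpha''_n=\frac{1-aq}{1-b}\left(\frac{(1-bq^n)\hat\alpha_n}{1-aq^{2n+1}}-\frac{q^{n-1}(aq^n-b)\hat\alpha_{n-1}}{1-aq^{2n-1}}\right),\qquad \beta''_n=\frac{1-bq^n}{1-b}\hat\beta_n.
\end{equation*}
By Lemma~\ref{lem:baileylattices}, $G$ sends Bailey pairs relative to $aq$ to Bailey pairs relative to $a$.

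First I would record two structural facts. On arbitrary pairs of sequences $G$ acts diagonally with nonzero entries on the $\beta$-component and by a lower-triangular map with nonzero diagonal on the $\alpha$-component, hence $G$ is injective. Secondly, a (unilateral) Bailey pair is completely determined by its $\beta$-sequence through the Bailey inversion~\eqref{bilatinversion}; conversely any $\beta$-sequence defines a Bailey pair by inversion. These two facts are all I need.

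The heart of the matter is the purely algebraic identity $G\circ L=\mathrm{id}$, checked on an arbitrary pair $(\alpha_n,\beta_n)$. On the $\beta$-side the composition is immediately $\tfrac{1-bq^n}{1-b}\cdot\tfrac{1-b}{1-bq^n}\beta_n=\beta_n$. On the $\alpha$-side I would write Lovejoy's numerator as $\alpha'_n=c_nS_n$ with $c_n=\tfrac{(1-aq^{2n+1})(aq/b)_n(-b)^nq^{\binom{n}{2}}}{(1-aq)(bq)_n}$ and $S_n=\sum_{r=0}^n\tfrac{(b)_r}{(aq/b)_r}(-b)^{-r}q^{-\binom{r}{2}}\alpha_r$, and substitute into the two-term recurrence defining $\alpha''_n$. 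After the simplification $\tfrac{1-bq^n}{(bq)_n}=\tfrac{1}{(bq)_{n-1}}$ the recurrence becomes a linear combination of $S_n$ and $S_{n-1}$; writing $S_n-S_{n-1}$ as the single term $r=n$, the coefficient of $S_{n-1}$ cancels exactly by means of $(aq/b)_n=-(aq/b)_{n-1}(aq^n-b)/b$, $q^{\binom{n}{2}}=q^{\binom{n-1}{2}+n-1}$ and $(-b)^n=-b\,(-b)^{n-1}$, while the surviving term collapses, using $(b)_n=(1-b)(bq)_{n-1}$, to exactly $\alpha_n$. This telescoping cancellation is the one computation that must be done with care, and I expect it to be the main obstacle, although it is only a few lines long.

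Finally I would assemble the pieces. Given a Bailey pair $(\alpha_n,\beta_n)$ relative to $a$, set $\hat\beta_n=\tfrac{1-b}{1-bq^n}\beta_n$ and define $\hat\alpha_n$ by the Bailey inversion relative to $aq$, so that $(\hat\alpha_n,\hat\beta_n)$ is a Bailey pair relative to $aq$ by construction. Applying $G$ gives a Bailey pair relative to $a$ whose $\beta$-component is $\tfrac{1-bq^n}{1-b}\hat\beta_n=\beta_n$, so $G(\hat\alpha,\hat\beta)=(\alpha,\beta)$ since a Bailey pair is determined by its $\beta$-sequence. Combining this with $G(L(\alpha,\beta))=(\alpha,\beta)$ from the identity above, and using the injectivity of $G$, yields $L(\alpha,\beta)=(\hat\alpha,\hat\beta)$, which is a Bailey pair relative to $aq$; this is exactly Lemma~\ref{lem:love}. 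The alternative route, namely inserting the formulas directly into the relation~\eqref{bp} relative to $aq$ and evaluating the inner sum, would instead force a terminating very-well-poised ${}_6\phi_5$ summation (with $B=aq/b$ and its remaining numerator parameter sent to infinity); the inverse-map argument is attractive precisely because it trades that summation for the short telescoping above.
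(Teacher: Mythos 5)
Your proposal is correct, but it is worth being clear about what it is being compared to: the paper gives \emph{no} proof of Lemma~\ref{lem:love} at all --- it is quoted from Lovejoy \cite[Theorem~2.3]{Lo04} and \cite[Lemma~2.3]{Lo22}, and the paper merely remarks that Lemma~\ref{lem:baileylattices} ``provides in the unilateral case an `inverse''' to it. What you have done is turn that informal remark into an actual derivation, which is a genuinely different route from Lovejoy's original direct verification. I checked the one computation you flag as delicate, and it goes through exactly as you describe: with $G$ denoting Lemma~\ref{lem:baileylattices} at base $aq$ and $\alpha'_n=c_nS_n$, the simplification $(1-bq^n)/(bq)_n=1/(bq)_{n-1}$ together with $(aq/b)_n=-(aq/b)_{n-1}(aq^n-b)/b$, $(-b)^n=-b(-b)^{n-1}$ and $q^{\binom{n}{2}}=q^{\binom{n-1}{2}+n-1}$ makes the two coefficients in $G$ proportional to a common factor, so that
\begin{equation*}
\alpha''_n=\frac{1-aq}{1-b}\,\frac{(aq/b)_n(-b)^nq^{\binom{n}{2}}}{(1-aq)(bq)_{n-1}}\,(S_n-S_{n-1})
=\frac{(b)_n}{(1-b)(bq)_{n-1}}\,\alpha_n=\alpha_n,
\end{equation*}
using $(b)_n=(1-b)(bq)_{n-1}$; the boundary case $n=0$ works with $S_{-1}=0$. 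Your assembly (define $\hat\beta_n=\frac{1-b}{1-bq^n}\beta_n$, invert at base $aq$ via \eqref{bilatinversion}, use that a unilateral Bailey pair is determined by its $\beta$-sequence, then conclude by injectivity of $G$) is sound; injectivity holds because the $\alpha$-map of $G$ is two-term lower-triangular with diagonal entries $\frac{(1-aq)(1-bq^n)}{(1-b)(1-aq^{2n+1})}$. Two small caveats you should make explicit: everything requires generic parameters ($b\neq q^{-n}$ for all $n\geq0$, $b \neq 1$, and $a$ avoiding the poles $q^{-N}$), after which the identity extends by the usual density argument; and since you work unilaterally, all sums are finite, so the convergence hypotheses attached to the bilateral lemmas are vacuous --- worth a sentence. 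What the comparison buys: Lovejoy's route proves the lemma from scratch by a summation argument, whereas yours trades that for the short telescoping above \emph{plus} the paper's Lemma~\ref{lem:baileylattices}, and as a byproduct establishes precisely the inverse relationship ($G\circ L=\mathrm{id}$ on sequences, and $L\circ G=\mathrm{id}$ on Bailey pairs) that the paper asserts without proof --- the same structural statement that recurs for the pair Theorem~\ref{thm:multibaileylattice}/Theorem~\ref{thm:multibaileylatticeLo}, where your argument should generalise.
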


Moreover, from Lemma \ref{lem:baileylattices}, we deduce a very general theorem transforming bilateral Bailey pairs relative to $a$ into bilateral Bailey pairs relative to $aq^{-N}$. Recall the $M$-th elementary symmetric polynomial in $N$ variables defined for $0 \leq M \leq N$ as
\begin{equation*}e_M(X_1, \dots , X_N)= \sum_{1\leq i_1<i_2<\dots<i_M\leq N} X_{i_1} X_{i_2}\cdots X_{i_M},
\end{equation*}
and $e_M(X_1, \dots , X_N)=0$ if $M<0$ or $M>N$.

Recall also that for all $0\leq j \leq N$, the $q$-binomial coefficient is defined by
\begin{equation*}\left[{N\atop j}\right]_q=\left[{N\atop j}\right]:=\frac{(q)_N}{(q)_j(q)_{N-j}}.
\end{equation*}
We extend this definition to $j<0$ and $j>N$ by setting $\bigl[{N\atop j}\bigr]=0$, which, as $N\geq0$, is consistent with the definition of $q$-Pochhammer symbols with negative indices given above.
The general theorem can be stated as follows.

\begin{Theorem}[new bilateral Bailey lattice in higher dimension]\label{thm:multibaileylattice}
Let $(\alpha_n, \beta_n)$ be a bilateral Bailey pair relative to $a$. For all $N\geq 1$, define the pair \smash{$\bigl(\alpha^{(N)}_n, \beta^{(N)}_n\bigr)$} by
\begin{equation}
\alpha^{(N)}_n=\frac{\bigl(1-aq^{2n-N}\bigr)\bigl(aq^{1-N}\bigr)_N}{(1-b_1)\cdots(1-b_N)}\sum_{j \in \Z}(-1)^j\frac{q^{jn-j(j+1)/2}f_{N,j,n}(b_1,\ldots,b_N)}{\bigl(aq^{2n-N-j}\bigr)_{N+1}} \alpha_{n-j},\label{eq:multibaileylatticealpha}
\end{equation}
where
\begin{align}
f_{N,j,n}(b_1,\ldots,b_N) :={}& \sum_{M \in \Z} \sum_{u \in \Z} a^{u}q^{(M-j+u)(n-j+u)+u(n-N)} \begin{bmatrix}
M\\
j-u
\end{bmatrix} \begin{bmatrix}
N-M\\
u
\end{bmatrix}\nonumber\\
&\times (-1)^Me_M(b_1, \dots , b_N),\label{eq:pascalab}
\end{align}
and
\begin{equation}
\beta^{(N)}_n=\left(\prod_{i=1}^N \frac{1-b_iq^n}{1-b_i}\right)\beta_n.\label{eq:multibaileylatticebeta}
\end{equation}
Then, \smash{$\bigl(\alpha^{(N)}_n, \beta^{(N)}_n\bigr)$} is a bilateral Bailey pair relative to $aq^{-N}$.
\end{Theorem}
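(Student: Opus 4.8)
The plan is to iterate Lemma~\ref{lem:baileylattices} exactly $N$ times, each step lowering the relative parameter from $aq^{-(k-1)}$ to $aq^{-k}$ while using the parameter $b_k$, and then to collect the resulting closed forms for $\alpha^{(N)}_n$ and $\beta^{(N)}_n$. First I would treat the $\beta$-side, which is the easy part: at the $k$-th application of Lemma~\ref{lem:baileylattices} to a pair relative to $aq^{-(k-1)}$ one multiplies $\beta_n$ by $(1-b_kq^n)/(1-b_k)$, so after $N$ steps the factor $\prod_{i=1}^N(1-b_iq^n)/(1-b_i)$ in~\eqref{eq:multibaileylatticebeta} emerges immediately. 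The main content is therefore the $\alpha$-side.

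For the $\alpha$-side I would set up an induction on $N$. Writing $T_b$ for the operator sending $\alpha_n$ (relative to $a$) to $\alpha'_n=\frac{1-a}{1-b}\bigl(\frac{(1-bq^n)\alpha_n}{1-aq^{2n}}-\frac{q^{n-1}(aq^{n-1}-b)\alpha_{n-1}}{1-aq^{2n-2}}\bigr)$, the claim is that the $N$-fold composite $T_{b_N}\circ\cdots\circ T_{b_1}$, where the $k$-th factor acts relative to $aq^{-(k-1)}$, produces precisely~\eqref{eq:multibaileylatticealpha}--\eqref{eq:pascalab}. The key structural observation is that each $T_b$ is a first-order difference operator in the shift $n\mapsto n-1$, so the composite is an explicit $\Z$-indexed combination $\sum_j c_{N,j,n}\,\alpha_{n-j}$; the whole problem is to identify the coefficient $c_{N,j,n}$ with the stated product of the prefactor $\frac{(1-aq^{2n-N})(aq^{1-N})_N}{\prod(1-b_i)}$, the sign and $q$-power $(-1)^jq^{jn-j(j+1)/2}$, the denominator $(aq^{2n-N-j})_{N+1}$, and the double-sum $f_{N,j,n}$. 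I expect the cleanest route is to guess the formula (as stated) and verify the induction step $N-1\to N$ directly: apply one more $T_{b_N}$ relative to $aq^{-(N-1)}$ to the inductive expression and check that the two resulting terms recombine into the $N$-level formula.

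The hard part will be the recurrence for $f_{N,j,n}$, i.e.\ showing that passing from $N-1$ to $N$ in~\eqref{eq:pascalab} is exactly what one extra application of $T_{b_N}$ contributes. Expanding $T_{b_N}$ splits $(1-b_Nq^n)=1-b_Nq^n$ into its two pieces and likewise $q^{n-1}(aq^{n-1}-b_N)$; tracking how these interact with the shift $\alpha_{n-j}\mapsto\alpha_{n-j}$ versus $\alpha_{n-j-1}$ should produce, on the symmetric-function side, the Pascal-type recurrence $e_M(b_1,\dots,b_N)=e_M(b_1,\dots,b_{N-1})+b_N\,e_{M-1}(b_1,\dots,b_{N-1})$, which is precisely why the elementary symmetric polynomials appear and why the two $q$-binomials $\bigl[\begin{smallmatrix}M\\j-u\end{smallmatrix}\bigr]$ and $\bigl[\begin{smallmatrix}N-M\\u\end{smallmatrix}\bigr]$ arise via the $q$-Pascal (Vandermonde) rule. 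The delicate bookkeeping is matching the powers of $a$ and $q$ in the denominators: the factor $(aq^{2n-N-j})_{N+1}$ shifts with both $N$ and $j$, and confirming that $T_{b_N}$ moves $(aq^{2(n)-(N-1)-j'})_{N}$ to $(aq^{2n-N-j})_{N+1}$ correctly for the relevant shifts of $j$ is the computation most likely to hide sign or index errors. Once this single recurrence is verified, the base case $N=1$ is just Lemma~\ref{lem:baileylattices} rewritten in the notation of~\eqref{eq:multibaileylatticealpha} (where the $u$- and $M$-sums collapse), and the induction closes.
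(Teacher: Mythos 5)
Your proposal follows essentially the same route as the paper's own proof: it too proceeds by induction on $N$, applying Lemma~\ref{lem:baileylattices} with $b=b_{N+1}$ at each step (the $\beta$-side being immediate), and places the entire weight of the argument on exactly the recurrence you identify for $f_{N,j,n}$ (Proposition~\ref{prop:rec}), which the paper establishes by splitting $e_M(-b_1,\dots,-b_{N+1})=e_M(-b_1,\dots,-b_N)-b_{N+1}\,e_{M-1}(-b_1,\dots,-b_N)$ and comparing coefficients via two extended $q$-Pascal identities (Lemmas~\ref{lem:tech1} and~\ref{lem:tech2}) --- precisely the mechanism you predicted. The only cosmetic differences are that the paper anchors the induction at $N=0$ rather than $N=1$ and packages the ``delicate bookkeeping'' of the shifted Pochhammer factor $\bigl(aq^{2n-N-j}\bigr)_{N+1}$ inside the statement of the recurrence itself.
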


\begin{Remark}
When $j<0$ or $j > N$, we have $f_{N,j,n}(b_1,\ldots,b_N) =0$ because of the $q$-binomial coefficients in~\eqref{eq:pascalab}. Therefore, the sum in \eqref{eq:multibaileylatticealpha} is actually finite.
\end{Remark}

\begin{Remark}
Note that the sums over $M$ and $u$ in \eqref{eq:pascalab} could equivalently be taken from~$0$ to $N$ and from~$0$ to $j$, respectively. Indeed the $q$-binomial coefficients or $e_M(b_1,\dots ,b_N)$ naturally cancel outside of these ranges. However, the expression with infinite sums makes future calculations easier to write.
\end{Remark}

Again, Theorem \ref{thm:multibaileylattice} can be seen in the unilateral case as the inverse of a theorem of Lovejoy~\cite[Theorem~2.3]{Lo04}.

\begin{Theorem}[Lovejoy]\label{thm:multibaileylatticeLo}
Let $(\alpha_n, \beta_n)$ be a Bailey pair relative to $a$. For all $N\geq 1$, define the pair \smash{$\bigl(\alpha^{(N)}_n, \beta^{(N)}_n\bigr)$} by
\begin{align*}
\alpha^{(N)}_n={}&\frac{\bigl(1-aq^{2n+N}\bigr)\bigl(aq^{N}/b_N\bigr)_n(-b_N)^nq^{\binom{n}{2}}}{\bigl(1-aq^N\bigr)(b_Nq)_n}
\\
&\times \sum_{n \geq n_N \geq \cdots \geq n_1 \geq 0} \frac{\bigl(1-aq^{2n_2+1}\bigr) \cdots \bigl(1-aq^{2n_N+N-1}\bigr)(aq/b_1)_{n_2} \cdots \bigl(aq^{N-1}/b_{N-1}\bigr)_{n_N}}{(1-aq) \cdots \bigl(1-aq^{N-1}\bigr)(aq/b_1)_{n_1} \cdots \bigl(aq^{N}/b_{N}\bigr)_{n_N}}
\\
&\times \frac{(b_1)_{n_1} \cdots (b_N)_{n_N}}{(b_1q)_{n_2} \cdots (b_{N-1}q)_{n_N}} b_1^{n_2-n_1} \cdots b_{N-1}^{n_N-n_{N-1}} b_N^{-n_N} (-1)^{n_1} q^{-\binom{n_1}{2}} \alpha_{n_1},
\end{align*}
and
\[
\beta^{(N)}_n=\left(\prod_{i=1}^N \frac{1-b_i}{1-b_iq^n}\right)\beta_n.
\]
Then, \smash{$\bigl(\alpha^{(N)}_n, \beta^{(N)}_n\bigr)$} is a Bailey pair relative to $aq^{N}$.
\end{Theorem}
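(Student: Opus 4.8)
The plan is to prove Theorem~\ref{thm:multibaileylatticeLo} by induction on $N$, realising the $N$-fold transformation as the $N$-fold iterate of Lovejoy's single-step Lemma~\ref{lem:love}. Since one application of Lemma~\ref{lem:love} sends a Bailey pair relative to a base $c$ to one relative to $cq$, starting from base $a$ and applying the lemma successively with parameters $b_1,b_2,\dots,b_N$ raises the base to $aq^N$, as required. Concretely, set $\bigl(\alpha^{(0)},\beta^{(0)}\bigr)=(\alpha,\beta)$ and, for $1\le k\le N$, let $\bigl(\alpha^{(k)},\beta^{(k)}\bigr)$ be the image of $\bigl(\alpha^{(k-1)},\beta^{(k-1)}\bigr)$ under Lemma~\ref{lem:love} with $a$ replaced by $aq^{k-1}$ and $b$ by $b_k$; this pair is relative to $aq^k$. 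The $\beta$ component is immediate, since each step multiplies $\beta_n$ by $(1-b_k)/(1-b_kq^n)$, so that after $N$ steps one obtains exactly the stated $\beta^{(N)}_n$.

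For the base case $N=1$, the asserted formula reduces verbatim to Lemma~\ref{lem:love} with $b=b_1$: all products indexed from $2$ to $N$ are empty, the nested sum collapses to $\sum_{n_1=0}^n\frac{(b_1)_{n_1}}{(aq/b_1)_{n_1}}(-b_1)^{-n_1}q^{-\binom{n_1}{2}}\alpha_{n_1}$, and the prefactor is the stated one. For the inductive step, apply Lemma~\ref{lem:love} with $a\mapsto aq^{N-1}$ and $b\mapsto b_N$ to the pair $\bigl(\alpha^{(N-1)},\beta^{(N-1)}\bigr)$ supplied by the theorem for $N-1$. This produces $\alpha^{(N)}_n=\frac{\bigl(1-aq^{2n+N}\bigr)\bigl(aq^{N}/b_N\bigr)_n(-b_N)^nq^{\binom{n}{2}}}{\bigl(1-aq^N\bigr)(b_Nq)_n}\sum_{n_N=0}^n\frac{(b_N)_{n_N}}{\bigl(aq^N/b_N\bigr)_{n_N}}(-b_N)^{-n_N}q^{-\binom{n_N}{2}}\alpha^{(N-1)}_{n_N}$, where the outer prefactor is already exactly the prefactor of $\alpha^{(N)}_n$ stated in the theorem. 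It then remains to substitute the inductive expression for $\alpha^{(N-1)}_{n_N}$ and to check that the resulting summand is the claimed $N$-fold one.

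The heart of the computation, and the only genuinely new manipulation at each stage, is the merge of the two factors carrying the index $n_N$: the summand factor $\frac{(b_N)_{n_N}}{(aq^N/b_N)_{n_N}}(-b_N)^{-n_N}q^{-\binom{n_N}{2}}$ just introduced, and the outer prefactor $\frac{\bigl(1-aq^{2n_N+N-1}\bigr)\bigl(aq^{N-1}/b_{N-1}\bigr)_{n_N}(-b_{N-1})^{n_N}q^{\binom{n_N}{2}}}{\bigl(1-aq^{N-1}\bigr)(b_{N-1}q)_{n_N}}$ that the inductive hypothesis attaches to $\alpha^{(N-1)}_{n_N}$. Here the opposite powers $q^{\pm\binom{n_N}{2}}$ cancel and the signs $(-1)^{\pm n_N}$ cancel, leaving precisely the factors $\frac{\bigl(1-aq^{2n_N+N-1}\bigr)\bigl(aq^{N-1}/b_{N-1}\bigr)_{n_N}(b_N)_{n_N}}{\bigl(1-aq^{N-1}\bigr)(b_{N-1}q)_{n_N}\bigl(aq^N/b_N\bigr)_{n_N}}\,b_{N-1}^{n_N}b_N^{-n_N}$ that index $n_N$ contributes to the target formula; the remaining factors, carrying indices $n_1,\dots,n_{N-1}$, are unchanged from the $(N-1)$-fold summand and are supplied by the inductive hypothesis, while the summation range becomes $n\ge n_N\ge\cdots\ge n_1\ge0$. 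I expect the main obstacle to be purely one of bookkeeping: correctly tracking how the prefactor produced at each step is absorbed into the summand of the next step under the index shift $n_k\mapsto n_{k+1}$, and verifying the telescoping cancellation of the $q^{\binom{\cdot}{2}}$-exponents and of the signs across all $N$ stages without index errors. No identity beyond Lemma~\ref{lem:love} itself is needed.
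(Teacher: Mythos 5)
Your proof is correct: iterating Lemma~\ref{lem:love} with base $aq^{k-1}$ and parameter $b_k$ at the $k$-th step does produce exactly the stated $N$-fold formula --- the merge of the two $n_N$-indexed factors works as you describe, the $q^{\pm\binom{n_N}{2}}$ powers and signs cancel leaving $b_{N-1}^{n_N-n_{N-1}}b_N^{-n_N}$, and the base case $N=1$ reduces verbatim to Lemma~\ref{lem:love}. Note that the paper itself states Theorem~\ref{thm:multibaileylatticeLo} without proof, citing Lovejoy \cite{Lo04}; your iteration argument is precisely Lovejoy's original route, and it also mirrors the structure of the paper's inductive proof of its own inverse result, Theorem~\ref{thm:multibaileylattice}, which likewise proceeds by repeatedly applying a one-step lemma (there Lemma~\ref{lem:baileylattices}).
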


As particular cases of Theorem \ref{thm:multibaileylattice}, we recover and generalise to the bilateral case some Bailey lattices due to Warnaar, as well as discover new simple ones (see Section \ref{sec:newBailey}).

Moreover, we take advantage of the bilateral aspect of our results by using a bilateral Bailey pair (in the special case $a=q^m$, see \eqref{ambubp}) instead of the classical unit Bailey pair, and obtain new generalisations, which we call $m$-versions, of the Andrews--Gordon identities, the Bressoud identities, and new companions to Bressoud's identities which we very recently discovered combinatorially \cite{DJK} (see \eqref{fij0}--\eqref{fij}).
The $m$-version of the Andrews--Gordon identities is as follows.
\begin{Theorem}[$m$-version of the Andrews--Gordon identities]\label{thm:mag}
Let $m\geq 0$, $r \geq 2$, and $0 \leq i \leq r$ be three integers. We have
\begin{gather}
\sum_{s_1\geq\dots\geq s_{r}\geq-\left\lfloor m/2\right\rfloor}\frac{q^{s_1^2+\dots+s_{r}^2+m(s_1+\dots+s_r)-s_1-\dots-s_i}}{(q)_{s_1-s_2}\cdots(q)_{s_{r-1}-s_r}}(-1)^{s_r}q^{\binom{s_r}{2}}\left[{m+s_r\atop m+2s_r}\right]\nonumber\\
\qquad=\sum_{k=0}^{i}q^{mk}\frac{\bigl(q^{2r+1},q^{(m+1)r-i+2k},q^{(1-m)r+i-2k+1};q^{2r+1}\bigr)_\infty}{(q)_\infty}.\label{mag}
\end{gather}
\end{Theorem}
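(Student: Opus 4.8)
The plan is to run the bilateral Bailey machinery on the bilateral unit Bailey pair relative to $a=q^m$ from~\eqref{ambubp}. I would iterate the bilateral Bailey lemma (Theorem~\ref{thm:bilatbaileylemma}) in the limit $\rho,\sigma\to\infty$ a total of $r$ times; in this limit one step sends a bilateral Bailey pair $(\alpha_n,\beta_n)$ relative to $a$ to $\bigl(a^nq^{n^2}\alpha_n,\ \sum_{j\le n}a^jq^{j^2}\beta_j/(q)_{n-j}\bigr)$, so that with $a=q^m$ each newly summed variable $s$ is weighted by $q^{s^2+ms}$. To reach the index $i$, I would replace exactly one of these steps — the one summing $s_i$ — by the bilateral Bailey lattice (Theorem~\ref{thm:bilatbaileylattice}), again with $\rho,\sigma\to\infty$. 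That step lowers the parameter from $q^m$ to $q^{m-1}$ and, because its limiting weight is $a^jq^{j^2-j}=q^{s^2+(m-1)s}$, every subsequent variable $s_1,\dots,s_i$ is then weighted with $a=q^{m-1}$, i.e.\ with the extra factor $q^{-s}$. This is exactly the linear correction $-s_1-\dots-s_i$ in~\eqref{mag} (for $i=0$ no lattice step is used and the parameter stays $q^m$).

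For the left-hand side I would evaluate $\lim_{n\to\infty}\beta^{(r)}_n$ straight from the iterated formula. The $q$-binomial factor of~\eqref{ambubp} survives as the innermost summand, and $\bigl[{m+s_r\atop m+2s_r}\bigr]\neq0$ only for $-\lfloor m/2\rfloor\le s_r\le0$, which produces the lower summation bound; the $r$ nested steps assemble the inner denominators $(q)_{s_1-s_2}\cdots(q)_{s_{r-1}-s_r}$, the quadratic/linear exponent, and one outer factor $(q)_{n-s_1}$. Since $(q)_{n-s_1}\to(q)_\infty$ and nothing else depends on the outer index, $\lim_{n\to\infty}\beta^{(r)}_n$ equals $1/(q)_\infty$ times the left-hand side of~\eqref{mag}.

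For the right-hand side I would use the defining relation~\eqref{bbp} of the final pair, which is relative to $a=q^{m-1}$, and again let $n\to\infty$: here $(q)_{n-j}\to(q)_\infty$ and $(q^m)_{n+j}\to(q^m)_\infty$, giving $\lim_{n\to\infty}\beta^{(r)}_n=\bigl((q)_\infty(q^m)_\infty\bigr)^{-1}\sum_{j\in\Z}\alpha^{(r)}_j$. Comparing with the previous paragraph reduces~\eqref{mag} to the evaluation of $\sum_j\alpha^{(r)}_j$. Since the $r-1$ plain steps only multiply by $a^nq^{n^2}$, the accumulated exponent is $\binom{j}{2}+rj^2=\bigl((2r+1)j^2-j\bigr)/2$, which by the Jacobi triple product yields theta factors to the base $q^{2r+1}$ of exactly the shape $\bigl(q^{2r+1},q^{(m+1)r-i+2k},q^{(1-m)r+i-2k+1};q^{2r+1}\bigr)_\infty$ appearing on the right.

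The hard part is this evaluation of $\sum_j\alpha^{(r)}_j$, and in particular the appearance of the finite sum $\sum_{k=0}^{i}q^{mk}$. The lattice step does not simply rescale $\alpha$: it replaces $\alpha_n$ by the two-term difference proportional to $\alpha_n/(1-aq^{2n})-aq^{2n-2}\alpha_{n-1}/(1-aq^{2n-2})$, so $\alpha^{(r)}_j$ is a difference of quadratically weighted shifts of the starting $\alpha$ of~\eqref{ambubp}. I expect that, when this combination is summed over $j\in\Z$ together with the $1-q^{m+2n}$ factor of~\eqref{ambubp}, the bilateral series telescopes and splits, via the triple product, into precisely the $i+1$ pieces $q^{mk}\times(\text{theta})$, $k=0,\dots,i$; pinning down this splitting is the delicate computational core. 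Two routine but necessary points remain: justifying the interchange of $\lim_{n\to\infty}$ with the bilateral summations through the absolute-convergence hypotheses of Theorems~\ref{thm:bilatbaileylemma} and~\ref{thm:bilatbaileylattice}, and checking at the outset that~\eqref{ambubp} is genuinely a bilateral Bailey pair relative to $q^m$.
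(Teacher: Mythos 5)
Your overall pipeline (iterate the bilateral Bailey lemma in the $\rho,\sigma\to\infty$ limit with one lattice step inserted at position $i$, let $n\to\infty$, and finish with the Jacobi triple product) is exactly the paper's route, which packages those $r$ iterations as Corollary~\ref{coro:bilatbaileylattice}. But there is a genuine error in your input: you start from the pair~\eqref{ambubp}, i.e., $\beta_n=\delta_{n,-m}$, and this cannot produce~\eqref{mag}. With $\beta_{s_r}=\delta_{s_r,-m}$ the innermost sum collapses to $s_r=-m$, and no $q$-binomial coefficient ever appears --- contrary to your claim that ``the $q$-binomial factor of~\eqref{ambubp} survives'': \eqref{ambubp} has no such factor, it belongs to~\eqref{mbubp}. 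Moreover, as the paper notes explicitly in the discussion following~\eqref{ambubp}, applying~\eqref{corobilatlattice} to that pair and shifting $j\to j-m$ erases all genuine dependence on $m$: one recovers a formula equivalent to~\eqref{AG}, not its $m$-version. The correct starting point is~\eqref{mbubp}, namely $\alpha_n=(-1)^nq^{\binom{n}{2}}$ and $\beta_n=(q)_m(-1)^nq^{\binom{n}{2}}\bigl[{m+n\atop m+2n}\bigr]$ relative to $a=q^m$ (dividing at the end by $(q)_m$); your description of the left-hand side and your exponent count $\binom{j}{2}+rj^2$ silently assume this pair, while your citation and your closing remark about verifying~\eqref{ambubp} point to the other one.

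This confusion also infects the key mechanism you leave open. You hope that $\sum_j\alpha^{(r)}_j$ ``telescopes'' using the factor $1-q^{m+2n}$ of~\eqref{ambubp}; but precisely when that factor is present it cancels the denominator $1-aq^{2j}$ created by the lattice step, the kernel degenerates into two plain theta series, and the finite sum $\sum_{k=0}^i q^{mk}$ never materialises --- that is the mechanism by which~\eqref{ambubp} collapses back to~\eqref{AG}. With~\eqref{mbubp} the denominator does \emph{not} cancel: the iteration yields the kernel $a^{rj}q^{rj^2-ij}\frac{1-a^{i+1}q^{2j(i+1)}}{1-aq^{2j}}$ of Corollary~\ref{coro:bilatbaileylattice}, and with $a=q^m$ the $i+1$ pieces come from the elementary geometric expansion $\frac{1-q^{(m+2j)(i+1)}}{1-q^{m+2j}}=\sum_{k=0}^iq^{(m+2j)k}$, followed by~\eqref{jtp} term by term --- no telescoping and nothing delicate. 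So the ``delicate computational core'' you defer is (a) already available as Corollary~\ref{coro:bilatbaileylattice} and (b) routine once the right pair is fed in; whereas as literally written, with the pair you name, the computation would terminate at~\eqref{AG} and fail to give~\eqref{mag}.
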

Note that in~\cite[equation~(3.21)]{BP}, Berkovich and Paule prove a different $m$-version of the Andrews--Gordon identities, in which $m$ is negative.

The $m$-version of our new companions to Bressoud's identities is the following.
\begin{Theorem}[$m$-version of our identities]\label{thm:mfij}
Let $m\geq 0$, $r \geq 2$, and $0 \leq i \leq r$ be integers. Then
\begin{gather}
\sum_{s_1\geq\dots\geq s_{r}\geq-\left\lfloor m/2\right\rfloor}\frac{q^{s_1^2+\dots+s_{r}^2+m(s_1+\dots+s_{r-1})-s_1-\dots-s_{i}+s_{r-1}-2s_r}(-q)_{m+2s_r}}{(q)_{s_1-s_2}\cdots(q)_{s_{r-2}-s_{r-1}}\bigl(q^2;q^2\bigr)_{s_{r-1}-s_{r}}}(-1)^{s_r}\left[{m+s_r\atop m+2s_r}\right]_{q^2}\nonumber\\
\qquad=\sum_{k=0}^{i}q^{mk}\frac{\bigl(q^{2r},q^{(m+1)(r-1)-i+2k},q^{(1-m)r+m+i-2k+1};q^{2r}\bigr)_\infty}{(q)_\infty}.\label{mfij}
\end{gather}
\end{Theorem}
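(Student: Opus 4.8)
The plan is to prove Theorem~\ref{thm:mfij} by the same bilateral Bailey-lattice scheme that produces the $m$-Andrews--Gordon identities of Theorem~\ref{thm:mag}, the only genuinely new ingredient being an \emph{even-modulus} seed that installs the $q^2$-structure appearing on the left-hand side of~\eqref{mfij}, namely the factors $(q^2;q^2)_{s_{r-1}-s_r}$, $(-q)_{m+2s_r}$ and $\bigl[{m+s_r\atop m+2s_r}\bigr]_{q^2}$. One works throughout with bilateral Bailey pairs relative to $a=q^m$, so that the lower bound $s_j\geq-\lfloor m/2\rfloor$ and the linear terms $m(s_1+\dots+s_{r-1})$ in the exponent arise automatically from the specialisation $a=q^m$, exactly as~\eqref{mag} is obtained from the bilateral pair~\eqref{ambubp}.

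First I would isolate the innermost sum of~\eqref{mfij}, the sum over $s_r$ carrying the data $(-q)_{m+2s_r}$, $(-1)^{s_r}$ and $\bigl[{m+s_r\atop m+2s_r}\bigr]_{q^2}$, and recognise it as the $\beta$-sequence of a bilateral Bailey pair relative to $q^m$; this is the even-modulus counterpart of the $m$-unit pair~\eqref{ambubp}. I would check that it is a bilateral Bailey pair either directly or via the bilateral inversion~\eqref{bilatinversion}, which also produces its $\alpha$-sequence. From this seed I would then build the outer variables by $r-1$ successive applications of the bilateral Bailey lemma (Theorem~\ref{thm:bilatbaileylemma}): a single even-modulus specialisation adjacent to the seed produces the gap $(q^2;q^2)_{s_{r-1}-s_r}=(q)_{s_{r-1}-s_r}(-q)_{s_{r-1}-s_r}$ and the exponent shift $s_{r-1}-2s_r$, while the remaining applications, taken with $\rho,\sigma\to\infty$, append the ordinary factors $q^{s_j^2+ms_j}/(q)_{s_j-s_{j+1}}$.

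To produce the index $i$ I would take $i$ of these appending steps to be bilateral Bailey lattice steps (Theorem~\ref{thm:bilatbaileylattice}, equivalently Lemma~\ref{lem:key1} followed by the bilateral Bailey lemma), lowering $a$ from $q^m$ down to $q^{m-i}$ and inserting the shifts $-(s_1+\dots+s_i)$. Finally I would let $n\to\infty$ in the defining relation~\eqref{bbp} of the resulting bilateral Bailey pair relative to $q^{m-i}$: the $\beta$-side converges to the left-hand multisum of~\eqref{mfij} up to a factor $1/(q)_\infty$, while the $\alpha$-side collapses to a finite combination of bilateral theta-type series. Summing each by the Jacobi triple product yields the right-hand side, the $i+1$ terms $\sum_{k=0}^{i}q^{mk}\bigl(q^{2r},q^{A_k},q^{B_k};q^{2r}\bigr)_\infty/(q)_\infty$ arising from the two-term recursion for $\alpha'_n$ iterated $i$ times, with $A_k=(m+1)(r-1)-i+2k$, $B_k=(1-m)r+m+i-2k+1$; the identity $A_k+B_k=2r$ confirms that each summand is a genuine triple product with base $q^{2r}$.

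I expect the main obstacle to be the construction and verification of the even-modulus seed pair relative to $q^m$ together with the adjacent even-modulus specialisation of the Bailey lemma: this is the one place where the $q^2$-binomial, the factor $(-q)_{m+2s_r}$ and the splitting $(q^2;q^2)_k=(q)_k(-q)_k$ must all be made to appear coherently and to interface correctly with the subsequent $\rho,\sigma\to\infty$ iterations. A secondary difficulty is purely computational: one must track every power of $q$ through the $i$ lattice steps so that the two theta arguments land precisely on $A_k$ and $B_k$, and one must check at each stage the convergence conditions required by Theorems~\ref{thm:bilatbaileylemma} and~\ref{thm:bilatbaileylattice} for the bilateral series to be absolutely convergent.
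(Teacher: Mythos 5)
Your overall architecture (a $q^2$-structured seed relative to $a=q^m$, chain iterations with $\rho,\sigma\to\infty$, a lattice mechanism producing the index $i$, then $n\to\infty$ and the Jacobi triple product) matches the paper's, and your check $A_k+B_k=2r$ is correct, but two of your concrete mechanisms fail. First, the even-modulus step cannot be a ``specialisation'' of the bilateral Bailey lemma (Theorem~\ref{thm:bilatbaileylemma}): in that lemma the new $\beta'_n$ always carries $(q)_{n-j}$ in its denominator, with $(aq/\rho\sigma)_{n-j}$ appearing only in the numerator, so no choice of $\rho,\sigma$ can create the denominator factor $(-q)_{s_{r-1}-s_r}$ needed for $\bigl(q^2;q^2\bigr)_{s_{r-1}-s_r}=(q)_{s_{r-1}-s_r}(-q)_{s_{r-1}-s_r}$. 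What is required is a change-of-base result, a genuinely different tool proved from a different summation: the paper applies the bilateral version~\eqref{bilatD1} of (D1) of~\cite{BIS} (whose proof rests on \cite[formula~(2.2)]{BIS}, not on $q$-Pfaff--Saalsch\"utz) directly to the shifted pair~\eqref{mbubp} with $a=q^m$. That single step simultaneously produces the $q^2$-binomial, the factor $(-q^{1+m})_{2s_r}$ (which combines with $\bigl(q^2;q^2\bigr)_m=(q)_m(-q)_m$ to give $(-q)_{m+2s_r}$ after dividing by $(q)_m$), the gap $\bigl(q^2;q^2\bigr)_{s_{r-1}-s_r}$, and the exponent $s_{r-1}-2s_r$. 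You correctly flagged this as the main obstacle, but the tool you propose to clear it with does not exist inside the Bailey lemma, and your alternative decomposition (a seed carrying only the $s_r$ data, obtained via the inversion~\eqref{bilatinversion}) is not how the composite inner sum arises: in the paper the entire sum over $s_r$, gap included, is the $\beta_{s_{r-1}}$ of the pair produced by~\eqref{bilatD1}.

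Second, taking $i$ lattice steps ``lowering $a$ from $q^m$ down to $q^{m-i}$'' is structurally wrong. The correct scheme — Corollary~\ref{coro:bilatbaileylattice} applied with $a=q^m$ and $r$ replaced by $r-1$ — uses exactly \emph{one} lattice step ($q^m\to q^{m-1}$), positioned so that the $i$ outermost sums are performed at the lower level; it is this positioning, not repetition, that inserts the uniform shift $-(s_1+\dots+s_i)$ (a variable adjoined at level $q^{m-1}$ acquires $q^{s^2+(m-1)s}$ versus $q^{s^2+ms}$ at level $q^m$) and, on the $\alpha$-side, the factor $\bigl(1-q^{(m+2j)(i+1)}\bigr)/\bigl(1-q^{m+2j}\bigr)$, whose geometric expansion $\sum_{k=0}^{i}q^{(m+2j)k}$ delivers the $i+1$ triple products via~\eqref{jtp}. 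Your variant breaks down in several ways: the final pair would be relative to $q^{m-i}$, so the limiting relation~\eqref{bbp} would give a prefactor $1/\bigl(q^{m-i+1}\bigr)_\infty$, which is meaningless for $i>m$ (a factor $1-q^0$ vanishes), whereas \eqref{mfij} holds for all $0\leq i\leq r$; the variables adjoined after the $k$-th lattice step would carry $q^{s^2+(m-k)s}$, producing shifts $-ks_j$ with growing coefficients instead of $-(s_1+\dots+s_i)$; and iterating the two-term recursion $i$ times yields $q$-binomial-weighted coefficients of the type $f_{N,j,n}$ in Theorem~\ref{thm:multibaileylattice} (compare Theorem~\ref{thm:Nbaileylattice1}), not the clean geometric sum — so even the fact that both counts equal $i+1$ is a coincidence of bookkeeping, not a proof. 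With these two repairs (replace your ``even-modulus Bailey lemma specialisation'' by~\eqref{bilatD1}, and your $i$ lattice steps by one lattice step placed after $r-1-i$ chain steps), your outline becomes exactly the paper's proof.
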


The $m$-version of the classical Bressoud identities (namely the even moduli counterpart of the Andrews--Gordon identities) is a bit less elegant (see Theorem~\ref{thm:mb}). Recall that in~\cite[p.~387]{W03}, there is a different $m$-version of the even moduli case.

Other famous identities of the Rogers--Ramanujan type, which where found by G\"ollnitz~\cite{Gol} and Gordon~\cite{Go65} independently, can be stated as follows.
\begin{Theorem}[G\"ollnitz--Gordon identities]
 \label{th:GG}
 Let $i=0$ or $1$. Then
\begin{equation*}
 \sum_{n \geq 0} \frac{q^{n^2+ 2(1-i)n}\bigl(-q;q^2\bigr)_n}{\bigl(q^2;q^2\bigr)_n} = \frac{1}{\bigl(q^{3-2i},q^4,q^{5+2i};q^8\bigr)_{\infty}}.
\end{equation*}
\end{Theorem}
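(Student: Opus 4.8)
The plan is to prove the Göllnitz--Gordon identities in complete analogy with the treatment of the Rogers--Ramanujan identities (Theorem~\ref{th:RR}), but after a change of base from $q$ to $q^2$. The shape of the series, with $(-q;q^2)_n$ in the numerator, $(q^2;q^2)_n$ in the denominator and a product of modulus $8$ on the right-hand side, makes it clear that the natural base is $q^2$. As in the Rogers--Ramanujan case, where $i=0,1$ come from two values of $a$, I would obtain the two companions $i=1$ and $i=0$ from the normalisations $a=1$ and $a=q^2$ respectively: carrying a factor $a^n=(q^2)^n$ through the iteration produces exactly the extra exponent $q^{2n}$ that distinguishes $i=0$ from $i=1$ in the power $q^{n^2+2(1-i)n}$.

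Concretely, I would start from the unit Bailey pair~\eqref{ubp} relative to $a$ and first install the Gaussian factor $q^{n^2}$ by running the Bailey chain, that is by applying the Bailey lemma (Theorem~\ref{thm:baileylemma}) with $\rho,\sigma\to\infty$; in this limit the factor $(\rho,\sigma)_n(aq/\rho\sigma)^n$ tends to $a^nq^{n^2}$, leaving $a^nq^{n^2}$ in the $\alpha$-sequence and $1/(q)_n$ in the $\beta$-sequence. I would then apply a change of base from $q$ to $q^2$ (as discussed in~\cite{BIS}): this is the step that replaces the base-$q$ data by base-$q^2$ Pochhammer symbols and, crucially, inserts the factor $(-q;q^2)_n$ into the numerator of the $\beta$-side. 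Passing to the limit $n\to\infty$ in the defining relation~\eqref{bp} then turns the $\beta$-side into the desired series $\sum_{n\ge0}q^{n^2+2(1-i)n}(-q;q^2)_n/(q^2;q^2)_n$, while the $\alpha$-side becomes a single bilateral theta series.

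Finally, I would evaluate that theta series by the Jacobi triple product. The exponents force the base $q^8$: writing the limiting $\alpha$-sum in the form $\sum_{j\in\Z}(-1)^jq^{4j^2-(1+2i)j}$ and applying the triple product with $z=q^{3-2i}$ and $q^8/z=q^{5+2i}$ produces the factors $(q^{3-2i},q^{5+2i};q^8)_\infty$, the remaining $(q^4;q^8)_\infty$ (equivalently a surviving $(-q;q^2)_\infty$) supplying the third factor of $1/(q^{3-2i},q^4,q^{5+2i};q^8)_\infty$. I expect the main obstacle to be the bookkeeping of the change of base in the middle step: one must control how the base-$q$ and base-$q^2$ symbols interact so that $(-q;q^2)_n$ appears with the correct sign in the numerator, and so that the theta residues come out to be exactly $3-2i$ and $5+2i$ modulo $8$ rather than a shifted pair. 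In the spirit of the present paper, this change of base can be bypassed altogether by reading the two cases $i=0,1$ directly off the even-modulus (Bressoud--Göllnitz--Gordon) family produced by our bilateral Bailey lattice in base $q^2$ applied to the bilateral analogue of the unit Bailey pair relative to $a=q^m$, specialised at the smallest rank and at $m=0$.
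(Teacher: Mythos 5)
First, a remark on the comparison itself: the paper does not prove Theorem~\ref{th:GG} at all --- it is quoted as a classical result of G\"ollnitz and Gordon, so there is no in-paper proof to match. The closest in-paper derivation is Theorem~\ref{thm:mbr-3.8}, whose case $m=1$ yields Bressoud's identity \eqref{B3.8}, and \eqref{B3.8} at $r=2$ is \emph{literally} Theorem~\ref{th:GG} after the rewriting $(-q;q^2)_{s_1}=q^{s_1^2}\bigl(-q^{1-2s_1};q^2\bigr)_{s_1}$. So your closing remark points at the right family but the wrong member: you need $m=1$, not $m=0$ (the $m=0$, $r=2$ cases give Corollary~\ref{coro:new1}, respectively the companion with $(-q;q^2)_{s}$ in the \emph{denominator}, which are genuinely different identities). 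More importantly, the middle step of your main route does not work as ordered. The change-of-base lemmas of \cite{BIS} --- see \eqref{bilatD4} and \eqref{bilatD1} --- consume a pair evaluated at $\bigl(a^2,q^2\bigr)$ and output a pair in base $q$ whose new $\beta$ is itself a sum; composing such a lemma with your base-$q$ iteration either doubles the Gaussian to $q^{2n^2}$ or leaves an extra internal summation, so you land on double-sum identities of the type \eqref{mb} and \eqref{mfij}, never on the single sum $\sum_{n\geq0} q^{n^2+2(1-i)n}(-q;q^2)_n/\bigl(q^2;q^2\bigr)_n$. The factor $(-q;q^2)_n$ together with the exponent $n^2$ (half-integral powers of the base $q^2$) does not come from a change of base; it comes from keeping one Bailey parameter \emph{finite and equal to} $-q=-\sqrt{q^2}$: take the unit pair \eqref{ubp} in base $q^2$ relative to $a\in\bigl\{1,q^2\bigr\}$, apply Theorem~\ref{thm:baileylemma} once with $\rho,\sigma\to\infty$ (giving $\beta_n=1/\bigl(q^2;q^2\bigr)_n$), then the limiting $n\to\infty$ form of the Bailey lemma with $\rho=-q$, $\sigma\to\infty$; the left-hand side is then exactly the G\"ollnitz--Gordon series, with $i=1$ for $a=1$ and $i=0$ for $a=q^2$, as you correctly predicted. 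This square-root-of-base device is precisely the mechanism the paper uses to reach the Bressoud--G\"ollnitz--Gordon family ($b=-q^{m/2}$ in Corollary~\ref{coro:bilatbaileylattice2}, in the proof of Theorem~\ref{thm:mbr-3.8}), with no change of base anywhere.

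Second, your final Jacobi triple product bookkeeping is crossed, and since you yourself flagged it as the expected obstacle, it is a genuine gap rather than a typo. For $a=1$ (your $i=1$) the surviving $\alpha$-side collapses to $\sum_{j\in\Z}(-1)^jq^{4j^2+j}$, which by \eqref{jtp} in base $q^8$ equals $\bigl(q^3,q^5,q^8;q^8\bigr)_\infty$ --- not your $\sum_{j\in\Z}(-1)^jq^{4j^2-3j}$, which is the theta series of the \emph{other} case $a=q^2$. Moreover, this triple product sits in the \emph{numerator}, multiplied by the prefactor $(-q;q^2)_\infty/\bigl(q^2;q^2\bigr)_\infty=\bigl(q^2,q^6;q^8\bigr)_\infty/(q)_\infty$, and the denominator of Theorem~\ref{th:GG} only arises by complementing residues modulo $8$: $\bigl(q^2,q^6;q^8\bigr)_\infty\bigl(q^3,q^5,q^8;q^8\bigr)_\infty/(q)_\infty = 1/\bigl(q,q^4,q^7;q^8\bigr)_\infty$. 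Your claim that the triple product directly ``produces the factors $\bigl(q^{3-2i},q^{5+2i};q^8\bigr)_\infty$'' of the answer, with ``a remaining $\bigl(q^4;q^8\bigr)_\infty$'' supplying the third factor, conflates these numerator factors with the final denominator; under the correct complementation, your theta with $z=q^{3-2i}$ would attach to case $i$ the product belonging to case $1-i$. The two mistakes --- wrong theta exponent and wrong reading of where the JTP product lands --- happen to cancel to the correct final statement, which is why the slip is easy to miss, but it means the derivation as written does not prove the theorem: once you fix the middle step as above, you must redo this last computation and verify that for $a=1$ the residues $\{3,5\}$ appear in the theta and $\{1,4,7\}$ survive in the denominator, and symmetrically for $a=q^2$.
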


As for the Rogers--Ramanujan identities, there are combinatorial interpretations and multisum generalisations of the G\"ollnitz--Gordon identities in the spirit of the Andrews--Gordon identities~\eqref{AG} (see, for instance, the recent paper~\cite{HZ}). Actually, Bressoud proved in~\cite{Br80} three different such generalisations, which are listed as (3.6)--(3.8) in his paper (he also proved another formula of the same kind, namely~\cite[equation (3.9)]{Br80}, which is so similar to~\cite[equation (3.8)]{Br80} that it is considered in~\cite{HZ} as a generalisation of the G\"ollnitz--Gordon identities, although it is not \emph{stricto sensu} the case).

While looking for $m$-versions of all these Bressoud--G\"ollnitz--Gordon identities, we discovered the following result, which surprisingly interpolates between the classical Bressoud identities and~\cite[equation (3.6)]{Br80}.

\begin{Theorem}[$m$-version of the Bressoud and Bressoud--G\"ollnitz--Gordon identities]\label{thm:mbr-3.6}
Let ${m\geq 0}$, $r \geq 2$, and $0 \leq i \leq r$ be integers. Then
\begin{gather}
\sum_{s_1\geq\dots\geq s_{r}\geq-\left\lfloor m/2\right\rfloor}\frac{q^{s_1^2+\dots+s_{r}^2+m(s_1+\dots+s_{r})-s_1-\dots-s_{i}-(m+1)s_r/2}\bigl(-q^{(m+1)/2}\bigr)_{s_{r}}}{(q)_{s_1-s_2}\cdots(q)_{s_{r-1}-s_{r}}\bigl(-q^{(m+1)/2}\bigr)_{s_{r-1}}}(-1)^{s_r}\left[{m+s_r\atop m+2s_r}\right]\nonumber\\
\qquad=\sum_{k=0}^{i}q^{mk}\frac{\bigl(q^{2r},q^{(m+1)r-i+2k-(m+1)/2},q^{(1-m)r+i-2k+(m+1)/2};q^{2r}\bigr)_\infty}{(q)_\infty}.\label{mbr-3.6}
\end{gather}
\end{Theorem}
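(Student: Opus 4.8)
The plan is to run the same bilateral Bailey machinery used for the $m$-version of the Andrews--Gordon identities (Theorem~\ref{thm:mag}), starting from the bilateral Bailey pair relative to $a=q^m$ recorded in~\eqref{ambubp}, and to modify only the innermost iteration so as to create the even modulus $2r$ and the G\"ollnitz--Gordon factors $\bigl(-q^{(m+1)/2}\bigr)$. Concretely, I would first apply the bilateral Bailey lemma (Theorem~\ref{thm:bilatbaileylemma}) once to~\eqref{ambubp} with $\rho\to\infty$ and $\sigma=-q^{(m+1)/2}$, keeping the base at $a=q^m$. Taking this limit in the $\beta$-transformation carries, on the variable $s_r$, the factor $\bigl(-q^{(m+1)/2}\bigr)_{s_r}\,q^{(m+1)s_r/2+\binom{s_r}{2}}$ together with the new denominators $(q)_{s_{r-1}-s_r}$ and $\bigl(-q^{(m+1)/2}\bigr)_{s_{r-1}}$. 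Combined with the $(-1)^{s_r}q^{\binom{s_r}{2}}\bigl[{m+s_r\atop m+2s_r}\bigr]$ coming from $\beta$ in~\eqref{ambubp}, this reproduces exactly the innermost contribution $(-1)^{s_r}q^{s_r^2+(m-1)s_r/2}\bigl(-q^{(m+1)/2}\bigr)_{s_r}\bigl[{m+s_r\atop m+2s_r}\bigr]$ and the denominators $(q)_{s_{r-1}-s_r}\bigl(-q^{(m+1)/2}\bigr)_{s_{r-1}}$ appearing on the left-hand side of~\eqref{mbr-3.6}.

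Next I would iterate $r-2$ further times, at each step letting both free parameters tend to infinity in Theorem~\ref{thm:bilatbaileylemma}; each such step turns the inner index $s_{k+1}$ into $a^{s_{k+1}}q^{s_{k+1}^2}/(q)_{s_k-s_{k+1}}$ and so builds up the remaining quadratic exponents $q^{s_k^2+ms_k}$ together with the denominators $(q)_{s_{k-1}-s_k}$. To produce the linear terms $-s_1-\dots-s_i$ in the exponent and, on the right-hand side, the $(i+1)$-fold sum $\sum_{k=0}^{i}q^{mk}$ of theta quotients, I would perform exactly $i$ base-changing steps, descending the base from $q^m$ to $q^{m-i}$ via the bilateral Bailey lattice (Theorem~\ref{thm:bilatbaileylattice}, equivalently key Lemma~\ref{lem:key1} followed by Theorem~\ref{thm:bilatbaileylemma}); alternatively one may descend in a single stroke with the higher-dimensional lattice of Theorem~\ref{thm:multibaileylattice} taken at $N=i$, whose $\alpha$-formula~\eqref{eq:multibaileylatticealpha} already exhibits the multi-term structure. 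It is the two-term recurrence satisfied by the $\alpha$'s under each base change that, after $i$ changes, is responsible for the $(i+1)$ theta functions on the right.

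Finally I would insert the resulting bilateral Bailey pair, relative to $a=q^{m-i}$, into the defining relation~\eqref{bbp} and let $n\to\infty$. On the left this yields precisely the multisum of~\eqref{mbr-3.6}, while on the right $\lim_{n\to\infty}(q)_{n-j}=(q)_\infty$ turns the $\alpha$-side into a constant multiple of the bilateral sum $\sum_{j}\alpha^{(\mathrm{final})}_j$. Evaluating this theta sum by the Jacobi triple product, and simplifying the resulting prefactor, should give exactly $\sum_{k=0}^{i}q^{mk}\bigl(q^{2r},q^{(m+1)r-i+2k-(m+1)/2},q^{(1-m)r+i-2k+(m+1)/2};q^{2r}\bigr)_\infty/(q)_\infty$, the two middle exponents summing to the modulus $2r$ as required.

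The main obstacle I anticipate is the bookkeeping at the junction of the three ingredients: making the G\"ollnitz--Gordon finite-parameter step at base $q^m$ coexist cleanly with the subsequent base-changing steps, and verifying that the $\alpha$-side limit, after the $i$ two-term recurrences, telescopes into \emph{exactly} the advertised $(i+1)$ theta functions of modulus $2r$ with the half-integer shifts $\pm(m+1)/2$ --- including the boundary cases $i\in\{0,r-1,r\}$, where the range of the quadratic exponent and the schedule of base changes interact and must be checked separately. Everything else is the same elementary Pochhammer manipulation and limiting procedure as in the proof of Theorem~\ref{thm:mag}.
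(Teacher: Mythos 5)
Your opening and closing moves do coincide with the paper's: the paper proves \eqref{mbr-3.6} by a one-shot application of Corollary~\ref{coro:bilatbaileylattice2} with $a=q^m$, $b\to\infty$ and $c=-q^{(m+1)/2}$ (so that $aq/c=-q^{(m+1)/2}$, which is exactly your innermost $\sigma$-specialisation) to the bilateral pair \eqref{mbubp}, then expands $\frac{1-q^{(m+2j)(i+1)}}{1-q^{m+2j}}=\sum_{k=0}^{i}q^{(m+2j)k}$ and finishes with the Jacobi triple product \eqref{jtp}, as you do. Note, however, that you cite \eqref{ambubp}, whose $\beta_n=\delta_{n,-m}$ would collapse the multisum entirely and which the paper explicitly shows yields nothing beyond \eqref{AG}; the $\beta$ you actually describe, $(-1)^{s_r}q^{\binom{s_r}{2}}$ times the $q$-binomial coefficient, is that of \eqref{mbubp}, so treat this as a reference slip to be fixed.

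The genuine gap is in the middle of your plan: you schedule $i$ base-changing steps descending from $q^m$ to $q^{m-i}$, and attribute the $(i+1)$ theta functions to the $i$-fold two-term recurrence. That is not how the parameter $i$ enters this machinery: there is exactly \emph{one} base change $q^m\to q^{m-1}$, inserted after $r-i$ Bailey-lemma steps and followed by $i-1$ further lemma steps at the lowered base --- this is what Theorem~\ref{thm:bilatncsqbaileylattice} and Corollary~\ref{coro:bilatbaileylattice2} encode. Your schedule fails concretely on several counts. First, index count: your plan creates $1+(r-2)+i=r-1+i$ nested sums (each lattice step, being key Lemma~\ref{lem:key1} followed by Theorem~\ref{thm:bilatbaileylemma}, contributes one index), while \eqref{mbr-3.6} has exactly $r$; the counts agree only when $i=1$. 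Second, linear exponents: an index created after $\ell$ descents carries $q^{(m-\ell)s}$, so $i$ successive descents produce corrections of the shape $-is_1-(i-1)s_2-\dots-s_i$, not the required $-s_1-\dots-s_i$; the uniform correction arises precisely because all $i$ outer steps are performed at the single lowered base $q^{m-1}$. Third, your final pair would be relative to $q^{m-i}$, and letting $n\to\infty$ in \eqref{bbp} then produces a prefactor $1/\bigl(q^{m-i+1}\bigr)_\infty$, whose denominator vanishes whenever $i\geq m+1$ --- i.e., for most admissible $i$, since $0\leq i\leq r$ is independent of $m$. Fourth, $i$ applications of the base-lowering recurrence (Theorem~\ref{thm:multibaileylattice} at $N=i$, as you suggest) yield an $(i+1)$-term sum weighted by $q$-binomial coefficients $\left[{i\atop k}\right]$, not the geometric coefficients $q^{mk}$ of \eqref{mbr-3.6}; in the paper the latter come from the single lattice step, whose two-term $\alpha'$ telescopes against the $i$ post-change lemma iterations into the ratio $\frac{1-q^{(m+2j)(i+1)}}{1-q^{m+2j}}$. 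Replacing your $i$ descents by this one correctly placed descent --- equivalently, simply invoking Corollary~\ref{coro:bilatbaileylattice2} on \eqref{mbubp} --- repairs the argument and reduces it to the paper's proof.
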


Actually, in~\cite{Br80}, Bressoud proved a very general multi-parameter identity (see Theorem~\ref{thm:Bressoud} below), of which the cases $m=0$ and $m=1$ of Theorems~\ref{thm:mag},~\ref{thm:mb}, and~\ref{thm:mbr-3.6} are particular cases. This led us to believe that Theorem~\ref{thm:Bressoud} could be proved using the classical Bailey lattice (see Theorem~\ref{thm:baileylattice}), but we did not succeed. However we managed to prove it in a simple way by using the unilateral version of our new Bailey lattice (see Theorem~\ref{thm:newbilatbaileylattice}), see Section~\ref{sec:bressoud}.
Moreover, the cases $m=0$ and $m=1$ of Theorem~\ref{thm:mfij} do not seem to follow from Theorem~\ref{thm:Bressoud}. So the Bailey lattice approach appears to be more general.

The paper is organised as follows. In Section~\ref{sec:mAG}, we use our bilateral Bailey lattices to prove general results and deduce $m$-versions of many classical identities, among which Theorems~\ref{thm:mag}--\ref{thm:mbr-3.6}.
In Section~\ref{sec:newBailey}, we give $N$-iterations of our Bailey lattices, generalise some $N$-Bailey lattices of Warnaar to the bilateral case, and prove Theorem~\ref{thm:multibaileylattice}.
In Section~\ref{sec:bressoud}, we show how to derive a new proof of Bressoud's theorem with our new Bailey lattice of Theorem~\ref{thm:newbilatbaileylattice}, and why we fail when trying to do the same using the classical Bailey lattice. Finally, we conclude with a~short section listing some open problems.

%%%%%%%%%%%%%%%%%%%%%%%%%%%%%%%%%%%%%%%%%%%%%%%%%%%%%%%%%%%%%%%%%%%%%%%%%%%
\section[New m-versions of the Andrews--Gordon identities and others]{New $\boldsymbol{m}$-versions of the Andrews--Gordon identities and others}\label{sec:mAG}
%%%%%%%%%%%%%%%%%%%%%%%%%%%%%%%%%%%%%%%%%%%%%%%%%%%%%%%%%%%%%%%%%%%%%%%%%%%

%%%%%%%%%%%%%%%%%
\subsection{Combining bilateral Bailey lemmas and lattices}
%%%%%%%%%%%%%%%%%

In~\cite{AAB}, many applications of the Bailey lattice (see Theorem~\ref{thm:baileylattice}) are provided, among which a~general result, obtained in~\cite[Theorem~3.1]{AAB} by iterating $r-i$ times Theorem~\ref{thm:baileylemma}, then using Theorem~\ref{thm:baileylattice}, and finally $i-1$ times Theorem~\ref{thm:baileylemma} with $a$ replaced by $a/q$. Using the same process in our bilateral point of view, replacing Theorem~\ref{thm:baileylemma} (resp.\ Theorem~\ref{thm:baileylattice}) by Theorem~\ref{thm:bilatbaileylemma} (resp.\ Theorem~\ref{thm:bilatbaileylattice}), we derive the following generalisation of~\cite[Theorem~3.1]{AAB}.

\begin{Theorem}\label{thm:bilatncsqbaileylattice}
If $(\alpha_n, \beta_n)$ is a bilateral Bailey pair relative to $a$, then for all integers $0\leq i\leq r$ and $n\in \Z$, we have
\begin{gather*}%\label{ncsqlattice}
\sum_{n\geq s_1\geq\dots\geq s_{r}}\frac{a^{s_1+\dots+s_r}q^{s_{i+1}+\dots+s_{r}}\beta_{s_r}}{(\rho_1\sigma_1)^{s_1}\cdots(\rho_r\sigma_r)^{s_r}}\frac{(\rho_1,\sigma_1)_{s_1} \cdots(\rho_r,\sigma_r)_{s_r}}{(q)_{n-s_1}(q)_{s_1-s_2}\cdots(q)_{s_{r-1}-s_r}}\\
\qquad{}\times\frac{(a/\rho_1\sigma_1)_{n-s_1}(a/\rho_2\sigma_2)_{s_1-s_2}\cdots(a/\rho_i\sigma_i)_{s_{i-1}-s_i}}{(a/\rho_1,a/\sigma_1)_{n} (a/\rho_2,a/\sigma_2)_{s_1}\cdots(a/\rho_i,a/\sigma_i)_{s_{i-1}}}\\
\qquad{}\times\frac{(aq/\rho_{i+1}\sigma_{i+1})_{s_i-s_{i+1}}\cdots(aq/\rho_r\sigma_r)_{s_{r-1}-s_r}}{(aq/\rho_{i+1},aq/\sigma_{i+1})_{s_i} \cdots(aq/\rho_r,aq/\sigma_r)_{s_{r-1}}}\\
\phantom{\qquad{}\times}{}=\sum_{j\leq n}
\frac{(\rho_1,\sigma_1,\dots,\rho_i,\sigma_i)_j(\rho_1\sigma_1\cdots\rho_i\sigma_i)^{-j}a^{ij}(1-a)}{(q)_{n-j}(a)_{n+j} (a/\rho_1,a/\sigma_1,\dots,a/\rho_i,a/\sigma_i)_j}\\
\phantom{\qquad{}\times=}{}\times\left(\frac{(\rho_{i+1},\sigma_{i+1},\dots,\rho_r,\sigma_r)_j(\rho_{i+1} \sigma_{i+1}\cdots\rho_r\sigma_r)^{-j}(aq)^{(r-i)j}\alpha_j}{(aq/\rho_{i+1},aq/\sigma_{i+1},\dots,aq/\rho_r,aq/\sigma_r)_j\bigl(1-aq^{2j}\bigr)}\right.\\
\phantom{\qquad{}\times=}{}\left.- \frac{(\rho_{i+1},\sigma_{i+1},\dots,\rho_r,\sigma_r)_{j-1}(\rho_{i+1}\sigma_{i+1}\cdots\rho_r\sigma_r)^{-j+1}(aq)^{(r-i)(j-1)}aq^{2j-2} \alpha_{j-1}}{(aq/\rho_{i+1},aq/\sigma_{i+1},\dots,aq/\rho_r,aq/\sigma_r)_{j-1}\bigl(1-aq^{2j-2}\bigr)}\right),
\end{gather*}
subject to convergence conditions on the sequences $\alpha_n$ and $\beta_n$, which make the relevant infinite series absolutely convergent.
\end{Theorem}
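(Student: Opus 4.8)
\emph{Proof proposal.} The plan is to realise the left-hand side as the $\beta$-component, and the summand on the right-hand side as the $\alpha$-component, of one and the same bilateral Bailey pair relative to $a/q$, obtained by composing the transformations of Theorems~\ref{thm:bilatbaileylemma} and~\ref{thm:bilatbaileylattice}, and then simply to read off the identity from the defining relation~\eqref{bbp}. Concretely, I would start from $(\alpha_n,\beta_n)$ relative to $a$ and apply the bilateral Bailey lemma (Theorem~\ref{thm:bilatbaileylemma}) $r-i$ times, with parameters $(\rho_r,\sigma_r),(\rho_{r-1},\sigma_{r-1}),\dots,(\rho_{i+1},\sigma_{i+1})$ in succession, all relative to $a$; then apply the bilateral Bailey lattice (Theorem~\ref{thm:bilatbaileylattice}) once, with parameters $(\rho_i,\sigma_i)$, this being the only step that lowers the base from $a$ to $a/q$; and finally apply the bilateral Bailey lemma $i-1$ more times, now relative to $a/q$, with parameters $(\rho_{i-1},\sigma_{i-1}),\dots,(\rho_1,\sigma_1)$. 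Each of the $r$ steps introduces one new outer summation; I would index them by $n\geq s_1\geq\dots\geq s_r$ from the last (outermost) step down to the first (innermost) one, so that $s_r$ is the summation index $j$ of the original pair.

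For the $\beta$-side I would track the cumulative weight. Each base-$a$ Bailey-lemma step, passing from $s_k$ to $s_{k-1}$ with parameters $(\rho_k,\sigma_k)$ for $i<k\leq r$, contributes the factor $(\rho_k,\sigma_k)_{s_k}(aq/\rho_k\sigma_k)_{s_{k-1}-s_k}(aq/\rho_k\sigma_k)^{s_k}/\bigl((q)_{s_{k-1}-s_k}(aq/\rho_k,aq/\sigma_k)_{s_{k-1}}\bigr)$; since the base equals $a$, the power $(aq/\rho_k\sigma_k)^{s_k}=a^{s_k}q^{s_k}(\rho_k\sigma_k)^{-s_k}$ accounts both for the $a^{s_k}(\rho_k\sigma_k)^{-s_k}$ and for the $q^{s_k}$ building up $q^{s_{i+1}+\dots+s_r}$. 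By contrast the lattice step and the subsequent $i-1$ base-$(a/q)$ steps produce the $(a/\rho_k\sigma_k)$-type Pochhammer factors, because $(a/q)q/\rho_k\sigma_k=a/\rho_k\sigma_k$, yielding the $a^{s_k}(\rho_k\sigma_k)^{-s_k}$ contributions for $1\leq k\leq i$ but no extra power of $q$. Collecting the factors $(\rho_k,\sigma_k)_{s_k}$, the $a^{s_k}(\rho_k\sigma_k)^{-s_k}$, the $(q)_{s_{k-1}-s_k}^{-1}$, and the two types of $(a/\rho_k\sigma_k)$ or $(aq/\rho_k\sigma_k)$ Pochhammer quotients reproduces exactly the product of quotients displayed on the left-hand side, so the composite $\beta$-component is precisely that $r$-fold sum.

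For the $\alpha$-side the bookkeeping is simpler, as each Bailey-lemma step only multiplies the current $\alpha_j$ by $(\rho_k,\sigma_k)_j(aq/\rho_k\sigma_k)^j/(aq/\rho_k,aq/\sigma_k)_j$ or its $a\to a/q$ analogue. Writing $\tilde\alpha_j$ for the sequence obtained after the $r-i$ base-$a$ steps, it equals $\alpha_j$ times $(\rho_{i+1},\sigma_{i+1},\dots,\rho_r,\sigma_r)_j(\rho_{i+1}\sigma_{i+1}\cdots\rho_r\sigma_r)^{-j}(aq)^{(r-i)j}$ divided by $(aq/\rho_{i+1},aq/\sigma_{i+1},\dots,aq/\rho_r,aq/\sigma_r)_j$. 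The lattice step then replaces $\tilde\alpha_j$ by $(1-a)$ times $\tilde\alpha_j/(1-aq^{2j})-aq^{2j-2}\tilde\alpha_{j-1}/(1-aq^{2j-2})$ and multiplies by $(\rho_i,\sigma_i)_j(a/\rho_i\sigma_i)^j/(a/\rho_i,a/\sigma_i)_j$, while the final $i-1$ steps multiply by $\prod_{k=1}^{i-1}(\rho_k,\sigma_k)_j(a/\rho_k\sigma_k)^j/(a/\rho_k,a/\sigma_k)_j$. Since $\prod_{k=1}^i(a/\rho_k\sigma_k)^j=a^{ij}(\rho_1\sigma_1\cdots\rho_i\sigma_i)^{-j}$, the accumulated prefactor matches precisely the factor standing in front of the difference on the right-hand side, and the two summands of the difference are exactly the $\alpha_j$ and $\alpha_{j-1}$ contributions, each carrying the $(r-i)$-step multipliers evaluated at its own index. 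Thus the bracketed summand on the right is the composite $\alpha$-component $\alpha^{\mathrm{final}}_j$, and since the final pair is relative to $a/q$, relation~\eqref{bbp} reads $\beta^{\mathrm{final}}_n=\sum_{j\leq n}\alpha^{\mathrm{final}}_j/\bigl((q)_{n-j}(a)_{n+j}\bigr)$, using $(a/q)q=a$; this is the asserted identity.

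The main obstacle is not any single computation but the faithful bookkeeping of the $r$ nested sums and the exact alignment of the parameter indices $(\rho_k,\sigma_k)$ with the successive steps, so that the base-$a$ steps ($k>i$) yield the $(aq/\rho_k\sigma_k)$-factors and the power $q^{s_k}$ while the lattice step ($k=i$) and the base-$(a/q)$ steps ($k<i$) yield the $(a/\rho_k\sigma_k)$-factors; one must also justify, at each interchange of bilateral summations, the absolute convergence guaranteed by the hypotheses. No new inversion or summation formula is needed: the only substantive inputs beyond the definition~\eqref{bbp} are the already-established bilateral Bailey lemma and lattice.
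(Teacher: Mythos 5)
Your proposal is correct and follows essentially the same route as the paper: the authors obtain this theorem exactly by iterating the bilateral Bailey lemma (Theorem~\ref{thm:bilatbaileylemma}) $r-i$ times, applying the bilateral Bailey lattice (Theorem~\ref{thm:bilatbaileylattice}) once with $(\rho_i,\sigma_i)$, iterating the bilateral Bailey lemma $i-1$ more times with $a$ replaced by $a/q$, and reading off the defining relation~\eqref{bbp}, precisely as in the unilateral argument of~\cite[Theorem~3.1]{AAB}. Your bookkeeping of the $\beta$- and $\alpha$-components (including the identification $(aq/\rho_k\sigma_k)^{s_k}=a^{s_k}q^{s_k}(\rho_k\sigma_k)^{-s_k}$ for the base-$a$ steps versus $a^{s_k}(\rho_k\sigma_k)^{-s_k}$ for the base-$a/q$ steps, and the prefactor $a^{ij}(\rho_1\sigma_1\cdots\rho_i\sigma_i)^{-j}$) matches the stated formula exactly, so nothing is missing beyond the degenerate readings at $i=0$ and $i=r$, which the paper also leaves implicit.
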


In this section, we will consider the special case below where all parameters $\rho_j,\sigma_j\to\infty$ and at the end $n\to+\infty$, which is a bilateral generalisation of~\cite[Corollary 4.2]{AAB}. (We also shifted the index $j$ to $j+1$ in the terms involving $\alpha_{j-1}$.)

\begin{Corollary}\label{coro:bilatbaileylattice}
If $(\alpha_n, \beta_n)$ is a bilateral Bailey pair relative to $a$, then for all integers $0\leq i\leq r$, we have
\begin{equation}\label{corobilatlattice}
\sum_{s_1\geq\dots\geq s_{r}}\frac{a^{s_1+\dots+s_r}q^{s_1^2+\dots+s_{r}^2-s_1-\dots-s_{i}}}{(q)_{s_1-s_2}\cdots(q)_{s_{r-1}-s_r}}\beta_{s_r} =\frac{1}{(aq)_\infty}\sum_{j\in\mathbb{Z}}a^{rj}q^{rj^2-ij}\frac{1-a^{i+1}q^{2j(i+1)}}{1-aq^{2j}}\alpha_j,
\end{equation}
subject to convergence conditions on the sequences $\alpha_n$ and $\beta_n$, which make the relevant infinite series absolutely convergent.
\end{Corollary}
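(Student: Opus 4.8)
The plan is to obtain Corollary \ref{coro:bilatbaileylattice} as the limiting case of Theorem \ref{thm:bilatncsqbaileylattice} described just above its statement: first let $\rho_j,\sigma_j\to\infty$ for all $1\le j\le r$, and then let $n\to+\infty$. Everything rests on three elementary limits. From $(\rho)_s=\prod_{k=0}^{s-1}(1-\rho q^k)\sim(-\rho)^sq^{\binom{s}{2}}$ as $\rho\to\infty$, one gets $\lim_{\rho,\sigma\to\infty}\frac{(\rho,\sigma)_s}{(\rho\sigma)^s}=q^{2\binom{s}{2}}=q^{s^2-s}$; moreover $(a/\rho\sigma)_k\to1$, $(a/\rho,a/\sigma)_k\to1$, and likewise with $aq$ in place of $a$. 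Hence every Pochhammer factor in Theorem \ref{thm:bilatncsqbaileylattice} built from $a/\rho_j\sigma_j$, $a/\rho_j$, $a/\sigma_j$, $aq/\rho_j\sigma_j$, etc.\ tends to $1$, and the only surviving trace of the $\rho_j,\sigma_j$ is the product $\prod_{j}q^{s_j^2-s_j}$ on the left (resp.\ the analogous products on the right).

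On the left-hand side, combining $\prod_{j=1}^r q^{s_j^2-s_j}$ with the factor $q^{s_{i+1}+\dots+s_r}$ already present yields the exponent $s_1^2+\dots+s_r^2-s_1-\dots-s_i$, since $(s_{i+1}+\dots+s_r)-(s_1+\dots+s_r)=-(s_1+\dots+s_i)$. The only remaining $n$-dependence is then the factor $1/(q)_{n-s_1}$, which tends to $1/(q)_\infty$ as $n\to+\infty$, so the left side produces exactly the left member of \eqref{corobilatlattice} times an overall $1/(q)_\infty$.

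On the right-hand side, after $\rho_j,\sigma_j\to\infty$ the prefactor contributes $q^{ij^2-ij}$ (from the $i$ indices $1,\dots,i$), the $\alpha_j$-term contributes $q^{(r-i)(j^2-j)}(aq)^{(r-i)j}$, and the $\alpha_{j-1}$-term the same with $j$ replaced by $j-1$. Collecting all powers of $a$ and $q$, the coefficient of $\alpha_j$ simplifies to $\frac{(1-a)a^{rj}q^{rj^2-ij}}{(q)_{n-j}(a)_{n+j}\bigl(1-aq^{2j}\bigr)}$. I then reindex the $\alpha_{j-1}$-term by $j\mapsto j+1$ (as announced before the statement), turning its coefficient multiplying $\alpha_j$ into $\frac{(1-a)a^{rj+i+1}q^{rj^2+ij+2j}}{(q)_{n-j-1}(a)_{n+j+1}\bigl(1-aq^{2j}\bigr)}$. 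Letting $n\to+\infty$ sends $(q)_{n-j},(q)_{n-j-1}\to(q)_\infty$ and $(a)_{n+j},(a)_{n+j+1}\to(a)_\infty$, and both sums then range over all $j\in\Z$; subtracting, the net coefficient of $\alpha_j$ is $\frac{(1-a)a^{rj}q^{rj^2-ij}}{(q)_\infty(a)_\infty\bigl(1-aq^{2j}\bigr)}\bigl(1-a^{i+1}q^{2j(i+1)}\bigr)$, the bracket appearing precisely because $a^{rj+i+1}q^{rj^2+ij+2j}=a^{rj}q^{rj^2-ij}\cdot a^{i+1}q^{2j(i+1)}$. Finally $(a)_\infty=(1-a)(aq)_\infty$ gives $\frac{1-a}{(a)_\infty}=\frac{1}{(aq)_\infty}$, and multiplying both members of the identity by $(q)_\infty$ cancels the left-hand $1/(q)_\infty$ and matches \eqref{corobilatlattice} exactly.

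The routine but delicate part is the exponent bookkeeping in the last step, namely checking that the $\alpha_j$ and reindexed $\alpha_{j-1}$ contributions share the common factor $a^{rj}q^{rj^2-ij}/\bigl(1-aq^{2j}\bigr)$ so that they combine into $1-a^{i+1}q^{2j(i+1)}$. The only genuine difficulty is analytic rather than algebraic: interchanging the two limits with the bilateral summations requires the absolute-convergence hypotheses, which is exactly why the statement is qualified by convergence conditions; granting these, dominated convergence justifies passing $\rho_j,\sigma_j\to\infty$ and then $n\to+\infty$ term by term.
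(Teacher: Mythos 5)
Your proposal is correct and follows exactly the paper's route: the paper obtains Corollary~\ref{coro:bilatbaileylattice} from Theorem~\ref{thm:bilatncsqbaileylattice} precisely by letting all $\rho_j,\sigma_j\to\infty$, then $n\to+\infty$, with the same shift $j\mapsto j+1$ in the $\alpha_{j-1}$ terms, and your exponent bookkeeping (the common factor $a^{rj}q^{rj^2-ij}/\bigl(1-aq^{2j}\bigr)$ producing $1-a^{i+1}q^{2j(i+1)}$, and $(a)_\infty=(1-a)(aq)_\infty$) checks out. One small remark: since the sum on the right is bilateral, the limit $(\rho,\sigma)_j(\rho\sigma)^{-j}\to q^{j^2-j}$ is also needed for negative $j$, which holds but requires the negative-index convention $(\rho)_{-m}=\prod_{k=1}^m\bigl(1-\rho q^{-k}\bigr)^{-1}$ rather than the finite-product formula you cite.
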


In~\cite{AAB}, Agarwal, Andrews and Bressoud prove the Andrews--Gordon identities~\eqref{AG} in the following way. They apply Corollary~\ref{coro:bilatbaileylattice} to the unit Bailey pair~\eqref{ubp} (which we recall is unilateral) with $a=q$, factorise the right-hand side using the Jacobi triple product identity~\cite[formula~(II.28)]{GR}
\begin{equation}\label{jtp}
\sum_{j\in\mathbb{Z}}(-1)^jz^jq^{j(j-1)/2}=(q,z,q/z;q)_\infty,
\end{equation}
and replace $i$ by $i-1$.

Regarding $m$-versions of Bressoud and Bressoud--G\"ollnitz--Gordon type identities, we will also need the more general case below where all parameters except $\rho_1$, $\rho_r$ tend to $\infty$ ($\rho_1$, $\rho_r$ are replaced by $b$, $c$ below).

\begin{Corollary}\label{coro:bilatbaileylattice2}
If $(\alpha_n, \beta_n)$ is a bilateral Bailey pair relative to $a$, then for all integers $0\leq i\leq r$, we have
\begin{gather}
\sum_{s_1\geq\dots\geq s_{r}}\frac{a^{s_1+\dots+s_r}q^{s_1^2/2+s_2^2+\dots+s_{r-1}^2+s_{r}^2/2-s_1/2-s_2-\dots-s_{i}+s_r/2}}{(q)_{s_1-s_2}\cdots(q)_{s_{r-1}-s_r}}(-1)^{s_1+s_r}\frac{(b)_{s_1}(c)_{s_r}}{b^{s_1}c^{s_r}(aq/c)_{s_{r-1}}}\beta_{s_r}\nonumber\\
\qquad=\frac{(a/b)_\infty}{(aq)_\infty}\sum_{j\in\mathbb{Z}}\frac{a^{rj}q^{(r-1)j^2-ij+j}}{1-aq^{2j}}\frac{(b,c)_j}{b^jc^j(a/b,aq/c)_j}
\left(1+a^{i+1}q^{j(2i+1)}\frac{1-bq^j}{b-aq^j}\right)\alpha_j,\label{coro2bilatlattice}
\end{gather}
subject to convergence conditions on the sequences $\alpha_n$ and $\beta_n$, which make the relevant infinite series absolutely convergent.
\end{Corollary}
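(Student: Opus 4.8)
The plan is to derive Corollary~\ref{coro:bilatbaileylattice2} from the general Theorem~\ref{thm:bilatncsqbaileylattice} by the same limiting procedure that yields Corollary~\ref{coro:bilatbaileylattice}, except that two of the $2r$ parameters are kept finite. Concretely, in Theorem~\ref{thm:bilatncsqbaileylattice} I would put $\rho_1=b$ and $\rho_r=c$, send the remaining parameters $\sigma_1,\rho_2,\sigma_2,\dots,\rho_{r-1},\sigma_{r-1},\sigma_r$ to $\infty$, and finally let $n\to+\infty$. Everything reduces to the elementary asymptotics $(\rho)_k/\rho^k\to(-1)^kq^{\binom{k}{2}}$ and $(a/\rho)_k\to1$ as $\rho\to\infty$, together with $(q)_{n-j}\to(q)_\infty$ and $(a)_{n+j}\to(a)_\infty=(1-a)(aq)_\infty$ as $n\to\infty$. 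Throughout I would assume $1\le i\le r-1$, so that $\rho_1$ sits in the first group (of $i$ pairs) and $\rho_r$ in the second group (of $r-i$ pairs); the two extreme values $i=0$ and $i=r$ require the same computation with one group empty and a short separate check.

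On the left-hand side I would evaluate the contribution of each summation index $s_k$ separately. A middle pair with both $\rho_k,\sigma_k\to\infty$ produces $q^{2\binom{s_k}{2}}=q^{s_k^2-s_k}$, which combines with the already present factor $q^{s_{i+1}+\cdots+s_r}$ to give $q^{s_k^2-s_k}$ when $k\le i$ and $q^{s_k^2}$ when $k>i$. The retained parameter $\rho_1=b$ leaves the explicit factor $(b)_{s_1}/b^{s_1}$ and, from $\sigma_1\to\infty$, a sign $(-1)^{s_1}$ and $q^{\binom{s_1}{2}}=q^{s_1^2/2-s_1/2}$, while its companion denominator $(a/\rho_1)_n=(a/b)_n$ tends to $(a/b)_\infty$; symmetrically $\rho_r=c$ leaves $(c)_{s_r}/c^{s_r}$, a sign, the half-integer power $q^{\binom{s_r}{2}}$ boosted by $q^{s_r}$ from $q^{s_{i+1}+\cdots+s_r}$ to $q^{s_r^2/2+s_r/2}$, and the surviving denominator $(aq/c)_{s_{r-1}}$. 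This reproduces exactly the exponent and the factors $(-1)^{s_1+s_r}(b)_{s_1}(c)_{s_r}/\bigl(b^{s_1}c^{s_r}(aq/c)_{s_{r-1}}\bigr)$ of~\eqref{coro2bilatlattice}, while the residual $1/\bigl((q)_\infty(a/b)_\infty\bigr)$ (its $(q)_\infty$ coming from $(q)_{n-s_1}$) will later be cleared.

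On the right-hand side the same asymptotics turn the outer prefactor into $(b)_j/\bigl(b^j(a/b)_j\bigr)$ times signs, powers of $q$ and $a^{ij}$, and a factor $1/\bigl((q)_\infty(aq)_\infty\bigr)$ after $n\to\infty$; the inner part of the second group yields the $c$-factor $(c)_j/\bigl(c^j(aq/c)_j\bigr)$. The bracket splits into an $\alpha_j$ term and an $\alpha_{j-1}$ term, and I would shift $j\mapsto j+1$ in the latter so that both multiply a single $\alpha_j$. The decisive computation is the ratio of the outer prefactor under this shift: its $b$-part contributes $\frac{(b)_{j+1}}{b^{j+1}(a/b)_{j+1}}\big/\frac{(b)_{j}}{b^{j}(a/b)_{j}}=\frac{1-bq^j}{b-aq^j}$, while the $2i-1$ infinite first-group parameters each contribute $-q^j$, for a total $-q^{j(2i-1)}$, and the power $a^{ij}$ contributes an extra $a^i$. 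Multiplying by the $aq^{2j}$ left over from the shifted $\alpha_{j-1}$ term and by the overall minus sign produces precisely $+a^{i+1}q^{j(2i+1)}\frac{1-bq^j}{b-aq^j}$, giving the bracket $\bigl(1+a^{i+1}q^{j(2i+1)}\frac{1-bq^j}{b-aq^j}\bigr)$ of~\eqref{coro2bilatlattice}. After cancelling the common $1/(q)_\infty$ and moving the surviving $1/(a/b)_\infty$ across, the prefactor becomes $(a/b)_\infty/(aq)_\infty$, as required; as a check, letting $b,c\to\infty$ one has $\frac{1-bq^j}{b-aq^j}\to-q^j$ and the bracket degenerates to the factor $\frac{1-a^{i+1}q^{2j(i+1)}}{1-aq^{2j}}$ of Corollary~\ref{coro:bilatbaileylattice}.

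The bookkeeping of signs and fractional $q$-exponents across all $2r$ parameters, with two kept finite, is the most error-prone part and must be done index by index to land on the precise exponent $s_1^2/2+s_2^2+\cdots+s_{r-1}^2+s_r^2/2-s_1/2-s_2-\cdots-s_i+s_r/2$ and coefficient $a^{rj}q^{(r-1)j^2-ij+j}$; note in particular that the two half-weight pairs are exactly why the leading power drops from $rj^2$ in Corollary~\ref{coro:bilatbaileylattice} to $(r-1)j^2$ here. The genuine obstacle, however, is analytic rather than algebraic: one must justify interchanging the iterated limits in the parameters and in $n$ with the bilateral summations on both sides. This is the origin of the convergence hypotheses in the statement, and since both sides are two-sided series evaluated termwise, it is the step that needs real care rather than routine manipulation.
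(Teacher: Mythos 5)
Your proposal coincides with the paper's own derivation: Corollary~\ref{coro:bilatbaileylattice2} is obtained there precisely as the limiting case of Theorem~\ref{thm:bilatncsqbaileylattice} in which all parameters except $\rho_1=b$ and $\rho_r=c$ tend to infinity, followed by $n\to+\infty$ and the shift $j\mapsto j+1$ in the terms involving $\alpha_{j-1}$ (the same shift the paper notes for Corollary~\ref{coro:bilatbaileylattice}). Your bookkeeping checks out --- the ratio $\frac{1-bq^j}{b-aq^j}$ from the $b$-part, the factor $-q^{j(2i-1)}$ from the $2i-1$ infinite first-group parameters, the extra $a^i$ and $aq^{2j}$ combining to $a^{i+1}q^{j(2i+1)}$, the exponent $s_1^2/2+s_2^2+\cdots+s_{r-1}^2+s_r^2/2-s_1/2-s_2-\cdots-s_i+s_r/2$, and the migration of $(a/b)_\infty$ to the right-hand side --- so this is the same proof, correctly executed.
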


Of course when $b,c\to\infty$ in~\eqref{coro2bilatlattice}, one gets~\eqref{corobilatlattice}

\begin{Remark}\label{rk:coronewbilatbaileylattice}
One can obtain a result similar to Theorem~\ref{thm:bilatncsqbaileylattice} by using Theorem~\ref{thm:newbilatbaileylattice} instead of Theorem~\ref{thm:bilatbaileylattice}. However,
since the limiting case of interest is nothing but Corollary~\ref{coro:bilatbaileylattice2} with $i$ replaced by $i-1$, we have decided to omit this additional theorem.
\end{Remark}

%%%%%%%%%%%%%%%%%
\subsection{Bilateral Bailey pairs}
%%%%%%%%%%%%%%%%%

In~\cite{J}, the bilateral Bailey lemma given in Theorem~\ref{thm:bilatbaileylemma} is studied in particular by considering the case where $a=q^m$ for a non-negative integer $m$ (this instance is called shifted Bailey lemma in~\cite{J}). The following bilateral (actually shifted) Bailey pair, which was already mentioned in another form in~\cite{ASW}, is considered
\begin{equation}\label{mbubp}
\alpha_n=(-1)^nq^{\binom{n}{2}}\qquad \mbox{and}\qquad\beta_n=(q)_m(-1)^nq^{\binom{n}{2}}\left[{m+n\atop m+2n}\right].
\end{equation}

Taking $m=0$ and $m=1$ in~\eqref{mbubp} yields Bailey pairs equivalent to the cases $a=1$ and~${a=q}$ of the unit Bailey pair~\eqref{ubp}. Note that choosing $\beta_n=\delta_{n,0}$ and computing $\alpha_n$ by the inversion~\eqref{bilatinversion} would not provide a new bilateral Bailey pair, as can be seen by Remark~\ref{rk:bilatinversion}: it returns the usual unit Bailey pair~\eqref{ubp}. However, to use in full generality the bilateral point of view while keeping $a$ general, it would be natural to consider
\begin{equation}\label{ambubp}
\alpha_n=(-1)^{n+m}q^{\binom{n+m}{2}}\frac{1-aq^{2n}}{1-a}\frac{(a)_{n-m}}{(q)_{n+m}}\qquad \mbox{and}\qquad\beta_n=\delta_{n,-m},
\end{equation}
where we made use of the inversion~\eqref{bilatinversion}. However, applying Corollary~\ref{coro:bilatbaileylattice} to the bilateral Bailey pair~\eqref{ambubp} does not provide any interesting generalisation (like the $m$-versions of the next section in the case of \eqref{mbubp}) of~\eqref{AG}, but a formula which is equivalent to~\eqref{AG} for all $m$.
 Indeed, by applying~\eqref{corobilatlattice} to~\eqref{ambubp} and replacing the index $j$ by $j-m$, a few classical $q$-series manipulations show that at the end there is no genuine dependence on $m$.

%%%%%%%%%%%%%%%%%
\subsection[m-versions of the Andrews--Gordon identities]{$\boldsymbol{m}$-versions of the Andrews--Gordon identities}
%%%%%%%%%%%%%%%%%

Recall that the Andrews--Gordon identities~\eqref{AG} arise in~\cite{Br80} in pair with a similar formula~\cite[equation~(3.3)]{Br80}, valid for all integers $r \geq 2$ and $0\leq i\leq r-1$
\begin{equation}\label{Br3.3}
\sum_{s_1\geq\dots\geq s_{r-1}\geq0}\frac{q^{s_1^2+\dots+s_{r-1}^2-s_1-\dots-s_i}}{(q)_{s_1-s_2}\cdots(q)_{s_{r-2}-s_{r-1}}(q)_{s_{r-1}}} =\sum_{k=0}^{i}\frac{\bigl(q^{2r+1},q^{r-i+k},q^{r+i-k+1};q^{2r+1}\bigr)_\infty}{(q)_\infty}.
\end{equation}
Note that there is a small mistake in Bressoud's paper: in his formula~\cite[equation~(3.3)]{Br80}, $\pm(k-r+i)$ (in his notation) has to be changed to $\pm(k-r+i+1)$. Identity~\eqref{Br3.3} is explained combinatorially in~\cite{DJK}, while it is used in~\cite{ADJM} to solve a combinatorial conjecture of Afsharijoo arising from commutative algebra.

We show that~\eqref{AG} and~\eqref{Br3.3} can be embedded in a single formula involving the integer~$m$ from the previous subsection: this is Theorem \ref{thm:mag}, the $m$-version of the Andrews--Gordon identities. Our proof relies on Corollary \ref{coro:bilatbaileylattice}, which itself is a consequence of our bilateral Bailey lattice.

\begin{proof}[Proof of Theorem \ref{thm:mag}]
Apply Corollary~\ref{coro:bilatbaileylattice} to the bilateral Bailey pair~\eqref{mbubp} with $a=q^m$ and divide both sides by $(q)_m$, this yields the desired left-hand side of~\eqref{mag}. Regarding the right-hand side, one gets
\begin{equation*}\frac{1}{(q)_\infty}\sum_{j\in\mathbb{Z}}q^{rj^2-ij+mrj}\frac{1-q^{(m+2j)(i+1)}}{1-q^{m+2j}}(-1)^{j}q^{\binom{j}{2}},
\end{equation*}
which by expanding the denominator in a geometric series yields
\begin{gather*}
\frac{1}{(q)_\infty}\sum_{k=0}^iq^{mk}\sum_{j\in\mathbb{Z}}q^{rj^2-ij+mrj+2kj}(-1)^{j}q^{\binom{j}{2}}\\
\qquad=\frac{1}{(q)_\infty}\sum_{k=0}^iq^{mk}\sum_{j\in\mathbb{Z}}(-1)^{j}q^{(2r+1)\binom{j}{2}}q^{j((m+1)r-i+2k)}.
\end{gather*}
This gives the result by using the Jacobi triple product identity~\eqref{jtp}.
\end{proof}

The case $i=0$ of Theorem \ref{thm:mag} is \cite[Theorem~2.3, equation~(2.3)]{J}, where specialisations of this formula are also studied further. Taking $m=0$ in~\eqref{mag} forces the index $s_r$ to be $0$, therefore the left-hand side is the one of~\eqref{Br3.3}. The right-hand sides actually also coincide: it is obvious for the even indices $2k$ on the right-hand side of~\eqref{Br3.3} (for $0\leq 2k\leq i$), while the odd indices~${2k+1}$ correspond to indices $i-k$ on the right-hand side of~\eqref{mag}. Taking~${m=1}$ in~\eqref{mag} also yields~$s_r$ to be $0$, therefore the left-hand side is the one of~\eqref{AG} (in which $i$ is replaced by~${i+1}$). Regarding the right-hand sides, the one of~\eqref{AG} is given by the first term~${k=0}$ in~\eqref{mag} (with $i$ replaced by $i-1$), while the sum from $1$ to $i$ actually cancels, even though it is not immediate at first sight.

\subsection[m-versions of Bressoud's even moduli counterparts]{$\boldsymbol{m}$-versions of Bressoud's even moduli counterparts}

In~\cite{B}, Bressoud found the counterpart for even moduli to the Andrews--Gordon identities~\eqref{AG}
\begin{gather}
\sum_{s_1\geq\dots\geq s_{r-1}\geq0}\frac{q^{s_1^2+\dots+s_{r-1}^2+s_{i}+\dots+s_{r-1}}}{(q)_{s_1-s_2}\cdots(q)_{s_{r-2}-s_{r-1}}\bigl(q^2;q^2\bigr)_{s_{r-1}}}
=\frac{\bigl(q^{2r},q^{i},q^{2r-i};q^{2r}\bigr)_\infty}{(q)_\infty},\label{B}
\end{gather}
where $r \geq 2$ and $1\leq i\leq r$ are fixed integers. As for the Andrews--Gordon identities, there is a counterpart for~\eqref{B} similar to~\eqref{Br3.3} which is proved in~\cite[equation~(3.5)]{Br80} and explained combinatorially in~\cite{DJK}
\begin{equation}\label{Br3.5}
\sum_{s_1\geq\dots\geq s_{r-1}\geq0}\frac{q^{s_1^2+\dots+s_{r-1}^2-s_1-\dots-s_i}}{(q)_{s_1-s_2}\cdots(q)_{s_{r-2}-s_{r-1}}\bigl(q^2;q^2\bigr)_{s_{r-1}}} =\sum_{k=0}^{i}\frac{\bigl(q^{2r},q^{r-i+2k},q^{r+i-2k};q^{2r}\bigr)_\infty}{(q)_\infty},
\end{equation}
for all integers $r \geq 2$ and $0\leq i\leq r-1$.

In this subsection, we aim to find a generalisation of both formulas above, in the spirit of Theorem~\ref{thm:mag}. To do so, we will need the following bilateral version of~\cite[Theorem 2.5]{BIS}, which changes the basis $q$ to $q^2$.
\begin{Theorem}
If $(\alpha_n, \beta_n)$ is a bilateral Bailey pair relative to $a$, then so is $(\alpha'_n, \beta'_n)$, where
\begin{equation*}\alpha'_n=\frac{(-b)_n}{(-aq/b)_n}\frac{1+a}{1+aq^{2n}}b^{-n}q^{n-\binom{n}{2}}\alpha_n\bigl(a^2,q^2\bigr)
\end{equation*}
and
\begin{equation*}\beta'_n=\sum_{j\leq n}\frac{(-a)_{2j}\bigl(b^2;q^2\bigr)_j\bigl(q^{-j+1}/b,bq^{j}\bigr)_{n-j}}{(b,-aq/b)_n\bigl(q^2;q^2\bigr)_{n-j}}b^{-j}q^{j-\binom{j}{2}}\beta_j\bigl(a^2,q^2\bigr),
\end{equation*}
provided the relevant series are absolutely convergent. Here $\alpha_n\bigl(a^2,q^2\bigr)$ and $\beta_n\bigl(a^2,q^2\bigr)$ means that $a$ and $q$ are replaced by $a^2$ and $q^2$ in the bilateral Bailey pair.
\end{Theorem}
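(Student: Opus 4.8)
The plan is to verify directly the defining relation \eqref{bbp} for $(\alpha'_n,\beta'_n)$ relative to $a$ in base $q$; that is, to prove that
\[
\sum_{j\le n}\frac{\alpha'_j}{(q)_{n-j}(aq)_{n+j}}=\beta'_n\qquad\forall n\in\Z,
\]
all interchanges of summation being legitimate under the stated absolute convergence hypotheses. First I would substitute the given expression for $\alpha'_j$ into the left-hand side. Since $(\alpha_n(a^2,q^2),\beta_n(a^2,q^2))$ is by assumption a bilateral Bailey pair relative to $a^2$ in base $q^2$, the bilateral inversion \eqref{bilatinversion} (which, by Remark~\ref{rk:bilatinversion}, holds in base $q^2$ relative to $a^2$) lets me replace
\[
\alpha_j\bigl(a^2,q^2\bigr)=\frac{1-a^2q^{4j}}{1-a^2}\sum_{k\le j}\frac{(a^2;q^2)_{j+k}}{(q^2;q^2)_{j-k}}(-1)^{j-k}q^{2\binom{j-k}{2}}\beta_k\bigl(a^2,q^2\bigr).
\]

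Next I would insert this into the left-hand side and interchange the $j$- and $k$-summations, turning the claim into a statement about the coefficient of each $\beta_k(a^2,q^2)$. The well-poised factors telescope pleasantly: the product of $1+a$ from $\alpha'_j$ and $1-a^2q^{4j}$ from the inversion, divided by $(1-a^2)(1+aq^{2j})$, collapses to $(1-aq^{2j})/(1-a)$. Matching coefficients of $\beta_k(a^2,q^2)$ then reduces the entire theorem to the single terminating identity
\[
\sum_{j=k}^{n}\frac{1-aq^{2j}}{(q)_{n-j}(aq)_{n+j}}\frac{(-b)_j}{(-aq/b)_j}\frac{(a^2;q^2)_{j+k}}{(q^2;q^2)_{j-k}}(-1)^{j-k}b^{-j}q^{j-\binom{j}{2}+2\binom{j-k}{2}}=\frac{(1-a)(-a)_{2k}(b^2;q^2)_k(q^{1-k}/b,bq^k)_{n-k}}{(b,-aq/b)_n(q^2;q^2)_{n-k}}b^{-k}q^{k-\binom{k}{2}},
\]
a finite identity no longer involving the bilateral sequences, which I would attack after reindexing $j=k+\ell$ with $0\le\ell\le n-k$.

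The very-well-poised factor $1-aq^{2j}$ signals that this reindexed sum is a terminating, very-well-poised basic hypergeometric series, and the main obstacle will be to bring it into a shape summable in closed form. The sum is genuinely \emph{mixed-base}, carrying both base-$q$ symbols such as $(q)_{n-j}$, $(aq)_{n+j}$, $(-b)_j$, $(-aq/b)_j$ and base-$q^2$ symbols such as $(a^2;q^2)_{j+k}$, $(q^2;q^2)_{j-k}$; here I would exploit the base-splitting identities $(a^2;q^2)_m=(a)_m(-a)_m$ and $(x;q)_{2m}=(x;q^2)_m(xq;q^2)_m$ to reconcile the two bases, after which the sum should collapse under a quadratic summation formula (equivalently, the terminating very-well-poised summation underlying the unilateral argument, cf.\ \cite[(II.12)]{GR} and \cite[(II.21)]{GR}). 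Because this concluding step is a purely terminating identity, the bilateral range plays no role in it: the bilateral theorem follows from exactly the same computation as its unilateral counterpart \cite[Theorem~2.5]{BIS}, the only genuinely new ingredient being the convergence conditions that justify interchanging the infinite sums, entirely in keeping with the philosophy of the rest of the paper.
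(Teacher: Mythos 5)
Your proposal is correct, but it runs the computation in the opposite direction from the paper. The paper's proof is one line: as in \cite{BIS}, expand $\beta_j\bigl(a^2,q^2\bigr)$ inside the definition of $\beta'_n$ using the defining relation \eqref{bbp} in base $q^2$ relative to $a^2$, interchange summations, and evaluate the resulting inner \emph{finite} sum by the quadratic summation \cite[formula~(2.2)]{BIS}; no inversion is invoked. You instead verify \eqref{bbp} for $(\alpha'_n,\beta'_n)$ directly, substitute the bilateral inversion \eqref{bilatinversion} (in base $q^2$ relative to $a^2$) for $\alpha_j\bigl(a^2,q^2\bigr)$, and match coefficients of $\beta_k\bigl(a^2,q^2\bigr)$ --- which is sufficient by linearity --- reducing the theorem to your displayed terminating identity; your simplification $\frac{(1+a)\bigl(1-a^2q^{4j}\bigr)}{(1-a^2)\bigl(1+aq^{2j}\bigr)}=\frac{1-aq^{2j}}{1-a}$ and the resulting display are right. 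Your endgame, though only asserted, does go through, with one correction to the citation: setting $j=k+\ell$ and splitting bases via $\bigl(a^2;q^2\bigr)_m=(a)_m(-a)_m$ and $\bigl(q^2;q^2\bigr)_m=(q)_m(-q)_m$, the quadratic powers of $q$ in $\ell$ cancel between $q^{j-\binom{j}{2}+2\binom{j-k}{2}}$ and the factor produced by rewriting $1/(q)_{n-j}$ in terms of $\bigl(q^{k-n}\bigr)_\ell$ (the signs cancel too), and the $\ell$-sum becomes a genuine terminating very-well-poised ${}_6\phi_5$ in base $q$ with special parameter $aq^{2k}$, numerator parameters $-aq^{2k}$, $-bq^k$, $q^{-(n-k)}$, and argument $q^{n-2k+1}/b$. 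This is summed by the terminating ${}_6\phi_5$ summation \cite[formula~(II.20)]{GR} --- not by (II.12) or (II.21) as you cite --- giving $\bigl(aq^{2k+1},q^{1-k}/b\bigr)_{n-k}\big/\bigl(-q,-aq^{k+1}/b\bigr)_{n-k}$, which matches your right-hand side after routine prefactor bookkeeping such as $(aq)_{n+k}=(aq)_{2k}\bigl(aq^{2k+1}\bigr)_{n-k}$, $(b)_n=(b)_k\bigl(bq^k\bigr)_{n-k}$ and $\bigl(b^2;q^2\bigr)_k=(b)_k(-b)_k$ (I checked the boundary case $n=k$ explicitly). Comparing the two routes: the paper's direction is shorter because it quotes the ready-made identity of \cite{BIS}, while yours trades that reference for the classical inversion plus a textbook ${}_6\phi_5$, and it makes explicit the correct structural point --- also implicit in the paper --- that the closing step is a terminating identity in which the bilateral range plays no role, so bilaterality costs only the absolute-convergence hypothesis justifying the interchange. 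To count as a complete proof rather than a proposal, you would still need to write out the ${}_6\phi_5$ reduction sketched above.
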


\begin{proof}
As in~\cite{BIS}, we only need to use the definition \eqref{bbp} of a bilateral Bailey pair, interchange summations and apply \cite[formula~(2.2)]{BIS}.
\end{proof}
As a consequence, letting $b\to+\infty$, we derive the following bilateral Bailey pair, therefore generalising (D4) in~\cite{BIS}
\begin{equation}\label{bilatD4}
\alpha'_n=\frac{1+a}{1+aq^{2n}}q^n\alpha_n\bigl(a^2,q^2\bigr)\qquad\mbox{and}\qquad\beta'_n=\sum_{j\leq n}\frac{(-a)_{2j}}{\bigl(q^2;q^2\bigr)_{n-j}}q^{j}\beta_j\bigl(a^2,q^2\bigr).
\end{equation}

Note that there are many other changes-of-base results in the literature complementing those of~\cite{BIS}. All of them should allow for the same bilateralisation as well. Nevertheless we only consider here the ones that we need for our purposes (see also~\eqref{bilatD1}).

Now we are ready to give our result.

\begin{Theorem}[$m$-version of the Bressoud identities]\label{thm:mb}
Let $m\geq 0$, $r \geq 2$, and $0 \leq i \leq r$ be three integers. We have
\begin{gather}
\sum_{s_1\geq\dots\geq s_{r}\geq-\left\lfloor m/2\right\rfloor}\!\!\!\!\!\frac{q^{s_1^2+\dots+s_{r}^2+m(s_1+\dots+s_{r-1})-s_1-\dots-s_{i}}({-}q)_{m+2s_r-1}}{(q)_{s_1-s_2}\cdots(q)_{s_{r-2} -s_{r-1}}\bigl(q^2;q^2\bigr)_{s_{r-1}-s_{r}}}(-1)^{s_r}\!\!\left[{m+s_r\atop m+2s_r}\right]_{q^2}\! =a_m,\!\!\!\!\label{mb}
\end{gather}
where
\begin{equation*}a_{2m}=\sum_{k=0}^{i}\sum_{\ell=0}^{2m}(-1)^\ell q^{2mk+2m\ell}\frac{\bigl(q^{2r},q^{2m(r-1)+r-i+2k+2\ell},q^{r+i-2m(r-1)-2k-2\ell};q^{2r}\bigr)_\infty}{2(q)_\infty},\end{equation*}
and
\begin{equation*}a_{2m+1}=(-1)^mq^{(2-r)m^2+(1+i-r)m}\sum_{\ell=0}^{m}q^{2\ell}\frac{\bigl(q^{2r},q^{2r-2m-1-i+4\ell},q^{i+2m+1-4\ell};q^{2r}\bigr)_\infty}{(q)_\infty}.
\end{equation*}
\end{Theorem}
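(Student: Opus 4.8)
The plan is to mirror the proof of Theorem~\ref{thm:mag}, but to feed Corollary~\ref{coro:bilatbaileylattice} with the $q\mapsto q^2$ change-of-base pair~\eqref{bilatD4} rather than with~\eqref{mbubp} directly; this is exactly what produces the base $q^2$ in the last denominator $\bigl(q^2;q^2\bigr)_{s_{r-1}-s_r}$ of~\eqref{mb}. Concretely, I start from the bilateral Bailey pair~\eqref{mbubp} read in base $q^2$ and relative to $a^2=q^{2m}$, so that $\alpha_n\bigl(a^2,q^2\bigr)=(-1)^nq^{n(n-1)}$ and $\beta_n\bigl(a^2,q^2\bigr)=\bigl(q^2;q^2\bigr)_m(-1)^nq^{n(n-1)}\bigl[{m+n\atop m+2n}\bigr]_{q^2}$, and apply~\eqref{bilatD4} with $a=q^m$. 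This yields a bilateral Bailey pair relative to $q^m$ whose $\beta'_n$ carries precisely the factors $\bigl(q^2;q^2\bigr)_{n-j}^{-1}$, $(-q^m;q)_{2j}$ and the $q^2$-binomial visible on the left-hand side of~\eqref{mb}, while $\alpha'_j=\frac{1+q^m}{1+q^{m+2j}}(-1)^jq^{j^2}$.

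Next I would insert this pair into Corollary~\ref{coro:bilatbaileylattice} \emph{with $r$ replaced by $r-1$} and $a=q^m$. The outer corollary supplies the indices $s_1\geq\dots\geq s_{r-1}$ together with the $r-2$ denominators $(q)_{s_1-s_2}\cdots(q)_{s_{r-2}-s_{r-1}}$ and the weight $q^{m(s_1+\dots+s_{r-1})+s_1^2+\dots+s_{r-1}^2-s_1-\dots-s_i}$, whereas the inner summation index of $\beta'_{s_{r-1}}$ becomes the last variable $s_r$, contributing $q^{s_r}q^{s_r(s_r-1)}=q^{s_r^2}$ and the $\bigl(q^2;q^2\bigr)_{s_{r-1}-s_r}$ denominator. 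A short computation using $(-q^m;q)_{2s_r}=(-q)_{m+2s_r-1}/(-q)_{m-1}$ and $\bigl(q^2;q^2\bigr)_m=(q)_m(-q)_m$ shows that the $s_r$-dependent prefactor $\bigl(q^2;q^2\bigr)_m(-q^m;q)_{2s_r}$ equals $(q)_m(1+q^m)\,(-q)_{m+2s_r-1}$; hence dividing the whole identity by the constant $(q)_m(1+q^m)$ produces exactly the left-hand side of~\eqref{mb} (the binomial $\bigl[{m+s_r\atop m+2s_r}\bigr]_{q^2}$ also enforces the summation range $s_r\geq-\lfloor m/2\rfloor$ and $s_r\le 0$).

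For the right-hand side, substituting $\alpha'_j$ into~\eqref{corobilatlattice} and using $\frac{1}{(q^{m+1})_\infty}\cdot\frac{1+q^m}{(q)_m(1+q^m)}=\frac1{(q)_\infty}$ collapses everything to the single bilateral theta-type sum
\[
a_m=\frac{1}{(q)_\infty}\sum_{j\in\Z}(-1)^jq^{rj^2+m(r-1)j-ij}\,\frac{1-q^{(m+2j)(i+1)}}{1-q^{2(m+2j)}},
\]
where I have used $\bigl(1-q^{m+2j}\bigr)\bigl(1+q^{m+2j}\bigr)=1-q^{2(m+2j)}$. Writing $w=m+2j$ and $\frac{1-q^{w(i+1)}}{1-q^{2w}}=\frac{1}{1+q^{w}}\sum_{k=0}^{i}q^{wk}$ reduces the task to evaluating, for each $k$, a bilateral sum $\sum_j(-1)^jq^{rj^2+c_kj}/(1+q^{m+2j})$ with $c_k=m(r-1)-i+2k$; since $q^{rj^2}=q^{2r\binom{j}{2}+rj}$, each such sum is meant to be resolved into products via the Jacobi triple product~\eqref{jtp} in base $q^{2r}$, which is the source of the moduli $q^{2r}$ appearing in $a_{2m}$ and $a_{2m+1}$.

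The main obstacle is precisely this evaluation, and in particular the parity split. Because $m+2j$ has the fixed parity of $m$, the factor $1/(1+q^{m+2j})$ must be expanded differently according to whether $m$ is even or odd: the symmetric/antisymmetric decomposition $\frac{1}{1+q^{w}}=\frac12\bigl(1+\frac{1-q^{w}}{1+q^{w}}\bigr)$ explains the factor $\frac12$ and the genuine double sum over $k$ and $\ell$ in $a_{2m}$, whereas for odd $m$ the reflection $j\mapsto-1-j$ (under which $q^{2j+1}\mapsto q^{-(2j+1)}$) antisymmetrises the Gaussian and collapses the $k$-summation, yielding the single sum and the prefactor $(-1)^mq^{(2-r)m^2+(1+i-r)m}$ of $a_{2m+1}$. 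I expect the bookkeeping of the completion of the square after this reflection, the truncation of $\ell$ to the finite ranges $0\le\ell\le 2m$ (resp.\ $0\le\ell\le m$), and the verification of absolute convergence of the bilateral series (guaranteed by the dominant Gaussian $q^{rj^2}$) to be the only delicate points; everything else is routine $q$-series manipulation parallel to the proof of Theorem~\ref{thm:mag}.
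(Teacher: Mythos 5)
The Bailey-machinery half of your proposal is exactly the paper's proof, and you execute it correctly: you apply the change-of-base pair \eqref{bilatD4} to \eqref{mbubp} read in base $q^2$ with $a=q^m$, insert the resulting pair (your $\alpha'_j=(-1)^jq^{j^2}\frac{1+q^m}{1+q^{m+2j}}$ is the paper's $\alpha_n$) into Corollary~\ref{coro:bilatbaileylattice} with $a=q^m$ and $r$ replaced by $r-1$, and normalise by $(1+q^m)(q)_m$. Your identity $\bigl(q^2;q^2\bigr)_m\bigl({-}q^m\bigr)_{2s_r}=(q)_m\bigl(1+q^m\bigr)(-q)_{m+2s_r-1}$ even makes explicit a step the paper leaves to the reader, and your intermediate expression for $a_m$ agrees with the paper's \eqref{am1} after writing $\bigl(1-q^{m+2j}\bigr)\bigl(1+q^{m+2j}\bigr)=1-q^{2(m+2j)}$.

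The genuine gap is in the evaluation of $a_m$, which is the substantive half of the theorem, and the two mechanisms you name in its place do not work as stated. The missing idea is the index shift $j\mapsto -j-m$ applied to the \emph{whole} bilateral sum: completing the square shows this shift multiplies the summand of \eqref{am1} by exactly $(-1)^mq^{w(m+1)}$ where $w=m+2j$ (this is the paper's \eqref{am2}), so averaging the sum with its shifted copy replaces $\frac{1}{1+q^{w}}$ by $\frac{1+(-1)^mq^{w(m+1)}}{2(1+q^{w})}$, which is a Laurent polynomial in $q^w$ precisely because the parities match: for $m$ even it equals $\frac{1}{2}\sum_{\ell=0}^{m}(-1)^\ell q^{w\ell}$ (whence the $\frac{1}{2}$ and the range $0\leq\ell\leq 2m$ in $a_{2m}$ after the relabelling $m\to 2m$), while for $m$ odd one combines $\frac{1-q^{w(m+1)}}{(1-q^{w})(1+q^{w})}=\frac{1-q^{w(m+1)}}{1-q^{2w}}$ into a geometric sum in $q^{2w}$ of length $(m+1)/2$, keeps the numerator $1-q^{w(i+1)}$ as two terms, applies \eqref{jtp} to each, and merges the two theta families via $\ell\mapsto m-\ell$, which is the sole source of the prefactor $(-1)^mq^{(2-r)m^2+(1+i-r)m}$ and of the disappearance of the $\frac{1}{2}$. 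Your even-$m$ device, $\frac{1}{1+q^{w}}=\frac{1}{2}\bigl(1+\frac{1-q^{w}}{1+q^{w}}\bigr)$, is circular — it returns the factor you started with — and its ``antisymmetric'' piece does not vanish, because the Gaussian weight $(-1)^jq^{rj^2-ij+m(r-1)j}$ has no parity under $w\mapsto -w$; so this decomposition cannot produce the finite alternating sum $\sum_{\ell=0}^{2m}(-1)^\ell q^{w\ell}=\frac{1+q^{w(2m+1)}}{1+q^{w}}$ whose upper limit encodes the theorem's $\ell$-range. Your odd-$m$ reflection $j\mapsto -1-j$ realises $w\mapsto -w$ only when $m=1$ (for general $m$ it sends $w$ to $-w+2(m-1)$), and it does not ``antisymmetrise the Gaussian'' nor collapse the $k$-summation by symmetry; as described above, the $k$-factor survives intact as $1-q^{w(i+1)}$ and is disposed of only at the very end. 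Without the shift $j\mapsto -j-m$ — which appears nowhere in your sketch — none of the finite $\ell$-ranges, the factor $\frac{1}{2}$, or the odd-case prefactor can be derived, so the proposal as written does not reach the stated values of $a_{2m}$ and $a_{2m+1}$.
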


\begin{proof}
We start from the bilateral Bailey pair~\eqref{mbubp} with $a=q^m$, to which we apply~\eqref{bilatD4}. This results in the bilateral Bailey pair
\begin{equation*}
\alpha_n=(-1)^nq^{n^2}\frac{1+q^{m}}{1+q^{m+2n}},\qquad\beta_n=\bigl(q^2;q^2\bigr)_m\sum_{j\leq n}(-1)^jq^{j^2}\frac{(-q^m)_{2j}}{\bigl(q^2;q^2\bigr)_{n-j}}\left[{m+j\atop m+2j}\right]_{q^2}.
\end{equation*}
Then apply Corollary~\ref{coro:bilatbaileylattice} with $a=q^m$ and $r$ replaced by $r-1$ to the above bilateral Bailey pair and divide both sides by $(1+q^m)(q)_m$: the left-hand side is the desired one ($\beta_n$ above is $\beta_{s_{r-1}}$ while $j=s_r$). The right-hand side is equal to
\begin{equation}\label{am1}
a_m=\frac{1}{(q)_\infty}\sum_{j\in\mathbb{Z}}(-1)^jq^{rj^2-ij+m(r-1)j}\frac{1-q^{(m+2j)(i+1)}}{1-q^{m+2j}}\frac{1}{1+q^{m+2j}}.
\end{equation}
Shifting the index of summation $j$ to $-j-m$ yields after rearranging
\begin{equation}\label{am2}
a_m=\frac{1}{(q)_\infty}\sum_{j\in\mathbb{Z}}(-1)^jq^{rj^2-ij+m(r-1)j}\frac{1-q^{(m+2j)(i+1)}}{1-q^{m+2j}}\frac{(-1)^mq^{(m+2j)(m+1)}}{1+q^{m+2j}}.
\end{equation}
Therefore, we get by adding~\eqref{am1} and~\eqref{am2}
\begin{equation*}
a_{2m}=\frac{1}{2(q)_\infty}\sum_{j\in\mathbb{Z}}(-1)^jq^{rj^2-ij+2m(r-1)j}\frac{1-q^{(2m+2j)(i+1)}}{1-q^{2m+2j}}\frac{1+q^{(2m+2j)(2m+1)}}{1+q^{2m+2j}},
\end{equation*}
in which we can expand both denominators in geometric series and obtain the desired result by using~\eqref{jtp}. Summing~\eqref{am1} and~\eqref{am2} gives in the odd case
\begin{equation*}
a_{2m+1}=\frac{1}{2(q)_\infty}\sum_{j\in\mathbb{Z}}(-1)^jq^{rj^2-ij+(2m+1)(r-1)j}\bigl(1-q^{(2m+1+2j)(i+1)}\bigr)\frac{1-q^{(2m+1+2j)(2m+2)}}{1-q^{2(2m+1+2j)}}.
\end{equation*}
Expanding the denominator in a geometric series and using~\eqref{jtp} yields
\begin{align*}
a_{2m+1}={}&\frac{1}{2(q)_\infty}\left(\sum_{\ell=0}^{m}q^{(4m+2)\ell}\bigl(q^{2r},q^{(2m+1)(r-1)+r-i+4\ell},q^{i-(2m+1)(r-1)+r-4\ell};q^{2r}\bigr)_\infty\right.\\
&\left.-\sum_{\ell=0}^{m}q^{(2m+1)(i+1)+(4m+2)\ell}\right.\\
&\left.\times\bigl(q^{2r},q^{(2m+1)(r-1)+r+i+2+4\ell},q^{-i-2-(2m+1)(r-1)+r-4\ell};q^{2r}\bigr)_\infty\right).
\end{align*}
Then observe that we can remove the $2mr$ factors in the infinite products by using in the first sum
\begin{gather*}
q^{(4m+2)\ell}\bigl(q^{(2m+1)(r-1)+r-i+4\ell},q^{i-(2m+1)(r-1)+r-4\ell};q^{2r}\bigr)_\infty\\
\qquad=(-1)^mq^{(2-r)m^2+(1+i-r)m+2\ell}\bigl(q^{2r-2m-1-i+4\ell},q^{i+2m+1-4\ell};q^{2r}\bigr)_\infty,
\end{gather*}
and in the second one
\begin{gather*}
q^{(2m+1)(i+1)+(4m+2)\ell}\bigl(q^{(2m+1)(r-1)+r+i+2+4\ell},q^{-i-2-(2m+1)(r-1)+r-4\ell};q^{2r}\bigr)_\infty\\
\qquad=(-1)^{m+1}q^{(2-r)m^2+(1+i-r)m-2\ell+2m}\bigl(q^{-2m+1+i+4\ell},q^{2r+2m-i-1-4\ell};q^{2r}\bigr)_\infty.
\end{gather*}
Therefore, replacing $\ell$ by $m-\ell$ in the second sum in $a_{2m+1}$ above yields the result.
\end{proof}

 Taking $m=0$ in~\eqref{mb} forces the index $s_r$ to be $0$, therefore we obtain the identity~\eqref{Br3.5} multiplied by $1/2$. Taking $m=1$ in~\eqref{mb} also yields $s_r$ to be $0$, therefore we get~\eqref{B} in which~$i$ is replaced by $i+1$.

\begin{Remark}\label{rk:iodd}
Contrary to Theorem~\ref{thm:mag}, we had to consider the parity of $m$ to use the Jacobi triple product~\eqref{jtp} in~\eqref{am1}. However, we managed to find a general expression for $a_m$, but only when $i$ is odd, writing
\begin{equation*}a_m=\frac{1}{(q)_\infty}\sum_{j\in\mathbb{Z}}(-1)^jq^{rj^2-ij+m(r-1)j}\frac{1-q^{(2m+4j)(i+1)/2}}{1-q^{2m+4j}}.
\end{equation*}
This gives for odd $i$ and any non-negative integer $m$
\begin{equation*}a_m=\sum_{k=0}^{(i-1)/2}q^{2mk}\frac{\bigl(q^{2r},q^{m(r-1)+r-i+4k},q^{r+i-m(r-1)-4k};q^{2r}\bigr)_\infty}{(q)_\infty}.
\end{equation*}
\end{Remark}

%%%%%%%%%%%%%%%%%
\subsection[m-versions of new even moduli counterparts]{$\boldsymbol{m}$-versions of new even moduli counterparts}
%%%%%%%%%%%%%%%%%
In~\cite{DJK}, while studying combinatorial interpretations of the Andrews--Gordon and Bressoud identities (\eqref{AG},~\eqref{Br3.3} and~\eqref{B}--\eqref{Br3.5}), the authors discovered in a purely combinatorial way the following pair of formulas, to be compared with~\eqref{B} and~\eqref{Br3.5}
\begin{gather}
(1+q)\sum_{s_1\geq\dots\geq s_{r-1}\geq0}\frac{q^{s_1^2+\dots+s_{r-1}^2+s_{i}+\cdots+ s_{r-2}+2s_{r-1}}}{(q)_{s_1-s_2}\cdots(q)_{s_{r-2}-s_{r-1}}\bigl(q^2;q^2\bigr)_{s_{r-1}}}\nonumber\\
\qquad=\frac{1}{(q)_\infty}\left(\bigl(q^{2r},q^{2r-i-1},q^{i+1};q^{2r}\bigr)_\infty+q\bigl(q^{2r},q^{2r-i+1},q^{i-1};q^{2r}\bigr)_\infty\right),\label{fij0}
\end{gather}
where $r \geq 2$ and $1\leq i\leq r$, and
\begin{gather}
\sum_{s_1\geq\dots\geq s_{r-1}\geq0}\frac{q^{s_1^2+\dots+s_{r-1}^2-s_1-\cdots- s_i+s_{r-1}}}{(q)_{s_1-s_2}\cdots(q)_{s_{r-2}-s_{r-1}}\bigl(q^2;q^2\bigr)_{s_{r-1}}}\nonumber\\
\qquad=\sum_{k=0}^{i}\frac{\bigl(q^{2r},q^{r-i+2k-1},q^{r+i-2k+1};q^{2r}\bigr)_\infty}{(q)_\infty},\label{fij}
\end{gather}
where $r \geq 2$ and $0\leq i\leq r-1$.

Again, we are able to embed \eqref{fij0} and \eqref{fij} into a general $m$-version, namely Theorem~\ref{thm:mfij}. To do this, instead of~\eqref{bilatD4}, we use the following bilateral Bailey pair generalising (D1) in~\cite{BIS}, and which is considered in~\cite{J}
\begin{equation}\label{bilatD1}
\alpha'_n=\alpha_n\bigl(a^2,q^2\bigr)\qquad\mbox{and}\qquad\beta'_n=\sum_{j\leq n}\frac{(-aq)_{2j}}{\bigl(q^2;q^2\bigr)_{n-j}}q^{n-j}\beta_j\bigl(a^2,q^2\bigr).
\end{equation}

\begin{proof}[Proof of Theorem \ref{thm:mfij}]
We start from the bilateral Bailey pair~\eqref{mbubp} with $a=q^m$, to which we apply~\eqref{bilatD1}. This results in the bilateral Bailey pair
\begin{equation*}
\alpha_n=(-1)^nq^{n^2-n}\qquad \mbox{and}\qquad\beta_n=\bigl(q^2;q^2\bigr)_m\sum_{j\leq n}(-1)^jq^{j^2+n-2j}\frac{(-q^{1+m})_{2j}}{\bigl(q^2;q^2\bigr)_{n-j}}\left[{m+j\atop m+2j}\right]_{q^2}.
\end{equation*}
Then apply Corollary~\ref{coro:bilatbaileylattice} with $a=q^m$ and $r$ replaced by $r-1$ to the above bilateral Bailey pair and divide both sides by $(q)_m$, the left-hand side is the desired one ($\beta_n$ above is $\beta_{s_{r-1}}$ while~${j=s_r}$). The right-hand side is equal to
\begin{equation*}
\frac{1}{(q)_\infty}\sum_{j\in\mathbb{Z}}(-1)^jq^{rj^2-(i+1)j+m(r-1)j}\frac{1-q^{(m+2j)(i+1)}}{1-q^{m+2j}},
\end{equation*}
which by expanding the denominator in a geometric series yields
\begin{gather*}
\frac{1}{(q)_\infty}\sum_{k=0}^iq^{mk}\sum_{j\in\mathbb{Z}}(-1)^{j}q^{rj^2-(i+1)j+m(r-1)j+2kj}\\
\qquad=\frac{1}{(q)_\infty}\sum_{k=0}^iq^{mk}\sum_{j\in\mathbb{Z}}(-1)^{j}q^{2r\binom{j}{2}}q^{j((m+1)(r-1)-i+2k)}.
\end{gather*}
This gives the result by using the Jacobi triple product identity~\eqref{jtp}.
\end{proof}

The case $i=0$ is \cite[Theorem~3.2]{J}. Taking $m=0$ in~\eqref{mfij} forces the index $s_r$ to be $0$, therefore we get~\eqref{fij}. Taking $m=1$ in~\eqref{mfij} also yields $s_r$ to be $0$, therefore the left-hand side is the one of~\eqref{fij0} (in which $i$ is replaced by $i+1$). Regarding the right-hand sides, the one of~\eqref{fij0} is given by the two first terms $k=0$ and $k=1$ in~\eqref{mfij} (with $i$ replaced by $i-1$), while the sum from $2$ to $i$ actually cancels, even though it is not immediate at first sight.

\subsection[m-versions of the Bressoud and Bressoud--Gollnitz--Gordon identities]{$\boldsymbol{m}$-versions of the Bressoud and Bressoud--G\"ollnitz--Gordon identities}

In~\cite{Br80}, in addition to~\eqref{AG} and~\eqref{Br3.3}--\eqref{Br3.5}, Bressoud proved four identities of the same kind, denoted (3.6)--(3.9) in his paper, among which (3.6)--(3.8) generalise the G\"ollnitz--Gordon identities of Theorem~\ref{th:GG}. In this section, we will give $m$-versions for all of these. More precisely, as our $m$-versions yield nice simplifications in the cases $m=0$ and $m=1$, all formulas come in pairs, as in the previous subsections. \cite[formulas~(3.6) and (3.7)]{Br80} will surprisingly arise in pairs with~\eqref{B} and~\eqref{Br3.5} respectively, while each of \cite[formulas~(3.8) and~(3.9)]{Br80} will be associated with formulas which seem to be new.

\subsubsection[m-version of Ref. 13, equation (3.6)]{$\boldsymbol{m}$-version of \cite[equation (3.6)]{Br80}}
First recall (3.6) in~\cite{Br80}
\begin{gather}
\sum_{s_1\geq\dots\geq s_{r-1}\geq0}\frac{q^{2(s_1^2+\dots+s_{r-1}^2-s_{1}-\dots-s_{i})}\bigl(-q^{1+2s_{r-1}};q^2\bigr)_\infty}{\bigl(q^2;q^2\bigr)_{s_1-s_2}\cdots
\bigl(q^2;q^2\bigr)_{s_{r-2}-s_{r-1}}\bigl(q^2;q^2\bigr)_{s_{r-1}}}\nonumber\\
\qquad=\frac{\bigl(-q;q^2\bigr)_\infty}{\bigl(q^2;q^2\bigr)_\infty}\sum_{k=0}^{i}\bigl(q^{4r},q^{2r-2i+2k-1},q^{2r+2i-2k+1};q^{4r}\bigr)_\infty,\label{B3.6}
\end{gather}
where $r\geq 2$ and $0\leq i\leq r-1$ are fixed integers. Note that the parameters in Bressoud's work are renamed $(k,r,i)\to(r,i+1,k)$ to match our notation. The appropriate $m$-version of this formula is given by~\eqref{mbr-3.6} that we prove below. Surprisingly, it also gives a $m$-version of~\eqref{B}.

\begin{proof}[Proof of Theorem~\ref{thm:mbr-3.6}]
We start from the bilateral Bailey pair~\eqref{mbubp} with $a=q^m$, to which we apply Corollary~\ref{coro:bilatbaileylattice2} with $a=q^m$, $b\to\infty$, $c=-q^{(m+1)/2}$ and divide both sides by $(q)_m$. The left-hand side is the desired one. The right-hand side is equal to
\begin{equation*}
\frac{1}{(q)_\infty}\sum_{j\in\mathbb{Z}}(-1)^jq^{rj^2-ij+mrj-(m+1)j/2}\frac{1-q^{(m+2j)(i+1)}}{1-q^{m+2j}},
\end{equation*}
which by expanding the denominator in a geometric series yields
\begin{gather*}
\frac{1}{(q)_\infty}\sum_{k=0}^iq^{mk}\sum_{j\in\mathbb{Z}}(-1)^{j}q^{rj^2-ij+mrj+2kj-(m+1)j/2}\\
\qquad=\frac{1}{(q)_\infty}\sum_{k=0}^iq^{mk}\sum_{j\in\mathbb{Z}}(-1)^{j}q^{2r\binom{j}{2}}q^{j((m+1)r-i+2k-(m+1)/2)}.
\end{gather*}
This gives the result by using the Jacobi triple product identity~\eqref{jtp}.
\end{proof}

Taking $m=0$ in~\eqref{mbr-3.6} forces the index $s_r$ to be $0$. Next, shifting $q\to q^2$ and multiplying both sides by
\begin{equation*}
\bigl(-q;q^2\bigr)_\infty=\bigl(-q;q^2\bigr)_{s_{r-1}}\bigl(-q^{1+2s_{r-1}};q^2\bigr)_\infty,
\end{equation*}
the left-hand side coincides with the one of~\eqref{B3.6}. The right-hand side is also the one of~\eqref{B3.6}: it is obvious for the even indices $2k$ on the right-hand side of~\eqref{B3.6} (for $0\leq 2k\leq i$), while the odd indices $2k+1$ correspond to indices $i-k$ on the right-hand side of~\eqref{mbr-3.6}. Taking $m=1$ in~\eqref{mbr-3.6} also yields $s_r$ to be $0$, therefore the left-hand side is the one of~\eqref{B} (in which $i$ is replaced by $i+1$). Regarding the right-hand sides, the one of~\eqref{B} (with $i$ replaced by $i+1$) is given by the first term $k=0$ in~\eqref{mbr-3.6}, while the sum from $1$ to $i$ actually cancels, even though it is not immediate at first sight.

\subsubsection[m-version of Ref.~13, equation (3.7)]{$\boldsymbol{m}$-version of \cite[equation (3.7)]{Br80}}
First recall (3.7) in~\cite{Br80}
\begin{gather}
\sum_{s_1\geq\dots\geq s_{r-1}\geq0}\frac{q^{2(s_1^2+\dots+s_{r-1}^2+s_{i+1}+\dots+s_{r-1})}\bigl(-q^{3+2s_{r-1}};q^2\bigr)_\infty}{\bigl(q^2;q^2\bigr)_{s_1-s_2}
\cdots\bigl(q^2;q^2\bigr)_{s_{r-2}-s_{r-1}}\bigl(q^2;q^2\bigr)_{s_{r-1}}}\nonumber\\
\qquad=\frac{\bigl(-q;q^2\bigr)_\infty}{\bigl(q^2;q^2\bigr)_\infty}\sum_{k=0}^{i}(-q)^k\bigl(q^{4r},q^{2i+1-2k},q^{4r-2i-1+2k};q^{4r}\bigr)_\infty,\label{B3.7}
\end{gather}
where $r\geq 2$ and $0\leq i\leq r-1$ are fixed integers. Note that the parameters in Bressoud's work are again renamed $(k,r,i)\to(r,i+1,k)$ to match our notation. Our $m$-version also extends~\eqref{Br3.5} and reads as follows.

\begin{Theorem}[$m$-version of the Bressoud identities {\cite[equation~(3.7)]{Br80}}]\label{thm:mbr-3.7}
Let $m\geq 0$, $r \geq 2$, and $0 \leq i \leq r-1$ be three integers. We have
\begin{gather}
\sum_{s_1\geq\dots\geq s_{r}\geq-\left\lfloor m/2\right\rfloor}\frac{q^{s_1^2+\dots+s_{r}^2+m(s_1+\dots+s_{r-1}+s_r/2)-s_1-\dots-s_{i}}\bigl(-q^{m/2}\bigr)_{s_r}}{(q)_{s_1-s_2} \cdots(q)_{s_{r-1}-s_{r}}\bigl(-q^{1+m/2}\bigr)_{s_{r-1}}}(-1)^{s_r}\left[{m+s_r\atop m+2s_r}\right]\nonumber\\
\qquad=b_m,\label{mbr-3.7}
\end{gather}
where
\begin{equation*}
b_{2m}=\frac{1+q^m}{2(q)_\infty}\sum_{k=0}^{i}\sum_{\ell=0}^{2m}(-1)^\ell q^{2mk+m\ell}\bigl(q^{2r},q^{2mr+r-i-m+2k+\ell},q^{r+i-2mr+m-2k-\ell};q^{2r}\bigr)_\infty,
\end{equation*}
and
\begin{align*}
b_{2m+1}={}&(-1)^mq^{(1-r)m^2+(1+2i-2r)m/2 }\frac{1+q^{(2m+1)/2}}{2(q)_\infty}\sum_{k=0}^i\sum_{\ell=0}^mq^{k+\ell}\\
&\times\bigl(\bigl(q^{2r},q^{2r-i-m+2k+2\ell-1/2},q^{i+m-2k-2\ell+1/2};q^{2r}\bigr)_\infty\\
&- q^{1/2}\bigl(q^{2r},q^{2r-i-m+2k+2\ell+1/2},q^{i+m-2k-2\ell-1/2};q^{2r}\bigr)_\infty\bigr).
\end{align*}
\end{Theorem}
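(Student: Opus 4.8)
The plan is to mimic the proofs of Theorems~\ref{thm:mb} and~\ref{thm:mbr-3.6}. Starting from the bilateral Bailey pair~\eqref{mbubp} with $a=q^m$, I would apply Corollary~\ref{coro:bilatbaileylattice2} with $a=q^m$, $b\to\infty$ and $c=-q^{m/2}$, then divide both sides by $(q)_m$. The choice $c=-q^{m/2}$ produces exactly the factor $\bigl(-q^{m/2}\bigr)_{s_r}$ in the numerator and $(aq/c)_{s_{r-1}}=\bigl(-q^{1+m/2}\bigr)_{s_{r-1}}$ in the denominator of the left-hand side, which then coincides with the left-hand side of~\eqref{mbr-3.7} (the $(-1)^{s_r}$ and the $q$-binomial coming from $\beta_{s_r}$). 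As in the proof of Theorem~\ref{thm:mbr-3.6}, the limit $b\to\infty$ turns $(b)_{s_1}/b^{s_1}$ into $(-1)^{s_1}q^{\binom{s_1}{2}}$ and replaces the bracketed term in~\eqref{coro2bilatlattice} by $1-a^{i+1}q^{2j(i+1)}$.

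On the right-hand side, the same limit together with $\alpha_j=(-1)^jq^{\binom{j}{2}}$ and the telescoping $\bigl(-q^{m/2}\bigr)_j/\bigl(-q^{1+m/2}\bigr)_j=\bigl(1+q^{m/2}\bigr)/\bigl(1+q^{m/2+j}\bigr)$ yields
\begin{equation*}
b_m=\frac{1+q^{m/2}}{(q)_\infty}\sum_{j\in\mathbb{Z}}(-1)^jq^{rj^2+(mr-i-m/2)j}\frac{1-q^{(m+2j)(i+1)}}{\bigl(1-q^{m+2j}\bigr)\bigl(1+q^{m/2+j}\bigr)}.
\end{equation*}
The genuinely new feature compared to Theorem~\ref{thm:mbr-3.6} is the extra denominator $1+q^{m/2+j}$. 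To handle it I would symmetrise exactly as in Theorem~\ref{thm:mb}: the shift $j\mapsto -j-m$ multiplies the theta-type weight $(-1)^jq^{rj^2+(mr-i-m/2)j}\tfrac{1-q^{(m+2j)(i+1)}}{1-q^{m+2j}}$ by $(-1)^mq^{mj+m^2/2}$ and turns $1/\bigl(1+q^{m/2+j}\bigr)$ into $q^{m/2+j}/\bigl(1+q^{m/2+j}\bigr)$. Setting $y=m/2+j$ and adding the two expressions for $b_m$, the two numerators combine into $1+(-1)^mq^{(m+1)y}$ over the common denominator $1+q^y$.

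The key simplification is that, \emph{regardless of the parity of} $m$, one has $\bigl(1+(-1)^mq^{(m+1)y}\bigr)/\bigl(1+q^y\bigr)=\sum_{\ell=0}^m(-1)^\ell q^{\ell y}$, a terminating geometric-type sum (because $m+1$ is odd when $m$ is even and even when $m$ is odd). Combined with $\tfrac{1-q^{(m+2j)(i+1)}}{1-q^{m+2j}}=\sum_{k=0}^iq^{(m+2j)k}$, this gives
\begin{equation*}
b_m=\frac{1+q^{m/2}}{2(q)_\infty}\sum_{\ell=0}^m(-1)^\ell q^{\ell m/2}\sum_{k=0}^iq^{mk}\sum_{j\in\mathbb{Z}}(-1)^jq^{rj^2+(mr-i-m/2+\ell+2k)j},
\end{equation*}
and each inner sum over $j$ is evaluated by the Jacobi triple product~\eqref{jtp} with base $q^{2r}$, using $rj^2=2r\binom{j}{2}+rj$.

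For even $m$ (i.e.\ replacing $m$ by $2m$) the half-integers disappear and this formula already produces the stated expression for $b_{2m}$. The main obstacle is the odd case $m=2m+1$: there $m/2$ and $\ell m/2$ introduce genuine $q^{1/2}$ powers in the triple products, and the $\ell$-sum runs from $0$ to $2m+1$ instead of $0$ to $m$. To reach the compact form of $b_{2m+1}$ I would fold this sum by pairing $\ell$ with $2m+1-\ell$ and strip the superfluous powers of the base $q^{2r}$ from the infinite products, exactly through the shift-and-rename bookkeeping already performed for $a_{2m+1}$ in the proof of Theorem~\ref{thm:mb}. This is routine but delicate accounting, rather than any new idea, and it is where essentially all the remaining work lies.
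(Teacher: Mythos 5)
Your proposal is correct and follows essentially the same route as the paper's proof: the same Bailey pair \eqref{mbubp} with $a=q^m$, the same application of Corollary~\ref{coro:bilatbaileylattice2} with $b\to\infty$, $c=-q^{m/2}$, the same symmetrising shift $j\mapsto -j-m$, and the Jacobi triple product \eqref{jtp} followed by the same $a_{2m+1}$-style bookkeeping. Your uniform identity $\bigl(1+(-1)^mq^{(m+1)y}\bigr)/\bigl(1+q^y\bigr)=\sum_{\ell=0}^m(-1)^\ell q^{\ell y}$ is a mild repackaging of the paper's parity-split geometric expansions (for odd parameter it is exactly the paper's factorisation $\sum_{\ell=0}^{2m+1}(-1)^\ell q^{\ell y}=\bigl(1-q^{y}\bigr)\sum_{\ell=0}^{m}q^{2\ell y}$, recovered by splitting $\ell$ into even and odd rather than pairing $\ell$ with $2m+1-\ell$), not a different method.
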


\begin{proof}
We start from the bilateral Bailey pair~\eqref{mbubp} with $a=q^m$, to which we apply Corollary~\ref{coro:bilatbaileylattice2} with $a=q^m$, $b\to\infty$, $c=-q^{m/2}$ and divide both sides by $(q)_m$. The left-hand side is the desired one. The right-hand side is equal to
\begin{equation}\label{bm1}
b_m=\frac{1+q^{m/2}}{(q)_\infty}\sum_{j\in\mathbb{Z}}(-1)^jq^{rj^2-ij+mrj-mj/2}\frac{1-q^{(m+2j)(i+1)}}{1-q^{m+2j}}\frac{1}{1+q^{(m+2j)/2}}.
\end{equation}
As in the proof of Theorem~\ref{thm:mb}, shifting the index $j$ to $-j-m$ above and adding the result with~\eqref{bm1} yields after rearranging
\begin{equation}\label{bm0}
b_{m}=\frac{1+q^{m/2}}{2(q)_\infty}\sum_{j\in\mathbb{Z}}(-1)^jq^{rj^2-ij+mrj-mj/2}\frac{1-q^{(m+2j)(i+1)}}{1-q^{m+2j}}\frac{1+(-1)^mq^{(m+2j)(m+1)/2}}{1+q^{(m+2j)/2}}.
\end{equation}
This gives
\begin{equation*}
b_{2m}=\frac{1+q^m}{2(q)_\infty}\sum_{j\in\mathbb{Z}}(-1)^jq^{rj^2-ij+2mrj-mj}\frac{1-q^{(2m+2j)(i+1)}}{1-q^{2m+2j}}\frac{1+q^{(m+j)(2m+1)}}{1+q^{m+j}},
\end{equation*}
in which we can expand both denominators in geometric series and obtain the desired result by using~\eqref{jtp}. Equation~\eqref{bm0} also yields
\begin{align*}
b_{2m+1}={}&\frac{1+q^{(2m+1)/2}}{2(q)_\infty}\sum_{j\in\mathbb{Z}}(-1)^jq^{rj^2-ij+(2m+1)rj-(2m+1)j/2}\\
&\times\frac{1-q^{(2m+2j+1)(i+1)}}{1-q^{2m+2j+1}}\frac{1-q^{(2m+2j+1)(m+1)}}{1+q^{(2m+2j+1)/2}}.
\end{align*}
Using $\bigl(1+q^{(2m+2j+1)/2}\bigr)\bigl(1-q^{(2m+2j+1)/2}\bigr)=1-q^{2m+2j+1}$ and expanding both denominators in geometric series yields
\begin{align*}
b_{2m+1}={}&\frac{1+q^{(2m+1)/2}}{2(q)_\infty}\sum_{j\in\mathbb{Z}}(-1)^jq^{rj^2-ij+(2m+1)rj-(2m+1)j/2}\\
&\times\bigl(1-q^{(2m+2j+1)/2}\bigr)\sum_{k=0}^iq^{(2m+2j+1)k}\sum_{\ell=0}^mq^{(2m+2j+1)\ell},
\end{align*}
which, by using~\eqref{jtp}, yields
\begin{align*}
b_{2m+1}={}&\frac{1+q^{(2m+1)/2}}{2(q)_\infty}\sum_{k=0}^i\sum_{\ell=0}^mq^{(2m+1)(k+\ell)}\\
&\times\bigl(\bigl(q^{2r},q^{(2m+1)r-i+r-m+2k+2\ell-1/2},q^{r+i-(2m+1)r+m-2k-2\ell+1/2};q^{2r}\bigr)_\infty \\
 &- q^{(2m+1)/2}\bigl(q^{2r},q^{(2m+1)r-i+r-m+2k+2\ell+1/2},q^{r+i-(2m+1)r+m-2k-2\ell-1/2};q^{2r}\bigr)_\infty\bigr).
\end{align*}
The result follows after using manipulations similar to the ones for $a_{2m+1}$ in the proof of Theorem~\ref{thm:mb}.
\end{proof}

 Taking $m=0$ in~\eqref{mbr-3.7} forces the index $s_r$ to be $0$, therefore we obtain~\eqref{Br3.5}. Taking $m=1$ in~\eqref{mbr-3.7} also yields $s_r$ to be $0$. Next, shifting $q\to q^2$ and multiplying both sides by
\begin{equation*}
\bigl(-q^3;q^2\bigr)_\infty=\bigl(-q^3;q^2\bigr)_{s_{r-1}}\bigl(-q^{3+2s_{r-1}};q^2\bigr)_\infty,
\end{equation*}
the left-hand side coincides with the one of~\eqref{B3.7}. The right-hand side becomes
\begin{gather*}
\frac{\bigl(-q;q^2\bigr)_\infty}{2\bigl(q^2;q^2\bigr)_\infty}\sum_{k=0}^iq^{2k}\bigl(\bigl(q^{4r},q^{4r-2i+4k-1},q^{2i-4k+1};q^{4r}\bigr)_\infty
\\
\qquad{}-q\bigl(q^{4r},q^{4r-2i+4k+1},q^{2i-4k-1};q^{4r}\bigr)_\infty\bigr).
\end{gather*}
This sum is indeed twice the one on the right-hand side of~\eqref{B3.7}: to see this, keep the terms~$k$ above for $0\leq 2k\leq i$ and $0\leq 2k+1\leq i$, and replace $k$ by $i-k$ for the terms $k$ satisfying~${i+1\leq 2k\leq 2i}$ and $i+1\leq 2k+1\leq 2i+1$.

\subsubsection[m-version of Ref. 13, equation (3.8)]{$\boldsymbol{m}$-version of~\cite[equation (3.8)]{Br80}}
First recall (3.8) in~\cite{Br80}
\begin{gather}
\sum_{s_1\geq\dots\geq s_{r-1}\geq0}\frac{q^{2(s_1^2+\dots+s_{r-1}^2+s_{i+1}+\dots+s_{r-1})}\bigl(-q^{1-2s_{1}};q^2\bigr)_{s_1}}{\bigl(q^2;q^2\bigr)_{s_1-s_2}
\cdots\bigl(q^2;q^2\bigr)_{s_{r-2}-s_{r-1}}\bigl(q^2;q^2\bigr)_{s_{r-1}}}\nonumber\\
\qquad=\frac{\bigl(-q;q^2\bigr)_\infty}{\bigl(q^2;q^2\bigr)_\infty}\bigl(q^{4r},q^{2i+1},q^{4r-2i-1};q^{4r}\bigr)_\infty,\label{B3.8}
\end{gather}
where $r\geq 2$ and $0\leq i\leq r-1$ are fixed integers. Note that the parameters in Bressoud's work are again changed by $(k,r,i)\to(r,i+1,k)$ to match our notation. Our $m$-version reads as follows.

\begin{Theorem}[$m$-version of the Bressoud identities {\cite[equation (3.8)]{Br80}}]\label{thm:mbr-3.8}
Let $m\geq 0$, $r \geq 2$, and $0 \leq i \leq r-1$ be three integers. We have
\begin{gather}
\sum_{s_1\geq\dots\geq s_{r}\geq-\left\lfloor m/2\right\rfloor}\frac{q^{s_1^2/2+s_2^2+\dots+s_{r}^2+m(s_1/2+s_2+\dots+s_{r-1})+s_1/2-(s_1+\dots+s_{i})}\bigl(-q^{m/2}\bigr)_{s_1}}{(q)_{s_1-s_2}\cdots(q)_{s_{r-1}-s_{r}}}\nonumber\\
\qquad{}\times(-1)^{s_r}q^{\binom{s_r}{2}}\left[{m+s_r\atop m+2s_r}\right]=c_m,\label{mbr-3.8}
\end{gather}
where
\begin{equation*}c_{2m}=\frac{(-q^m)_\infty}{2(q)_\infty}\sum_{k=0}^{2i}\sum_{\ell=0}^{2m}(-1)^\ell q^{mk+m\ell}\bigl(q^{2r},q^{2mr+r-i-m+k+\ell},q^{r+i-2mr+m-k-\ell};q^{2r}\bigr)_\infty,
\end{equation*}
and
\begin{align*}
c_{2m+1}={}&(-1)^mq^{(1-r)m^2+(1+2i-2r)m/2}\frac{\bigl(-q^{(2m+1)/2}\bigr)_\infty}{(q)_\infty}\\
&\times\sum_{\ell=0}^{m}q^{\ell}\bigl(q^{2r},q^{2r-i-m+2\ell-1/2},q^{i+m-2\ell+1/2};q^{2r}\bigr)_\infty.
\end{align*}
\end{Theorem}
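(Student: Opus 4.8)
The plan is to imitate the proofs of Theorems~\ref{thm:mbr-3.6} and~\ref{thm:mbr-3.7}, feeding the bilateral Bailey pair~\eqref{mbubp} into Corollary~\ref{coro:bilatbaileylattice2}. Since the target left-hand side of~\eqref{mbr-3.8} carries the factor $\bigl(-q^{m/2}\bigr)_{s_1}$ on the \emph{largest} index together with only a half-weight $s_1^2/2$, while the smallest index $s_r$ appears with full weight $s_r^2$ and no surviving Pochhammer symbol, I would reverse the roles of $b$ and $c$ used there: take $a=q^m$, $b=-q^{m/2}$, and let $c\to\infty$ in~\eqref{coro2bilatlattice}, then divide both sides by $(q)_m$. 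With $\alpha_n=(-1)^nq^{\binom{n}{2}}$, the limit $c\to\infty$ supplies the clean factor $(-1)^{s_r}q^{\binom{s_r}{2}}$ and sends $(aq/c)_{s_{r-1}}\to1$, so the left-hand side reduces precisely to the multisum in~\eqref{mbr-3.8}.

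The decisive simplification occurs on the right-hand side. For $b=-q^{m/2}$, $a=q^m$ one has $1-bq^j=1+q^{m/2+j}$ and $b-aq^j=-q^{m/2}\bigl(1+q^{m/2+j}\bigr)$, whence the quotient inside the bracket of~\eqref{coro2bilatlattice} collapses to
\begin{equation*}
\frac{1-bq^j}{b-aq^j}=-q^{-m/2}.
\end{equation*}
After this cancellation the remaining Pochhammer symbols in $j$ also disappear, $(b)_j/\bigl(b^j(a/b)_j\bigr)$ reducing to $(-1)^jq^{-mj/2}$, and I obtain
\begin{equation*}
c_m=\frac{\bigl(-q^{m/2}\bigr)_\infty}{(q)_\infty}\sum_{j\in\mathbb{Z}}(-1)^jq^{rj^2+mrj-ij-mj/2}\,\frac{1-q^{(2i+1)(m+2j)/2}}{1-q^{m+2j}}.
\end{equation*}

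To evaluate this theta-type sum I would symmetrise exactly as for $a_m$ in the proof of Theorem~\ref{thm:mb}: replacing $j$ by $-j-m$ reproduces the summand up to the factor $(-1)^mq^{(m+1)(m+2j)/2}$, so averaging the two expressions inserts $\frac{1}{2}\bigl(1+(-1)^mq^{(m+1)(m+2j)/2}\bigr)$. The parity of $m$ must then be separated. For $m=2M$, with $u=M+j$ the bracket is $1+q^{(2M+1)u}$, and using $1-q^{(2i+1)u}=(1-q^u)\sum_{k=0}^{2i}q^{ku}$ together with $(1+q^{(2M+1)u})/(1+q^u)=\sum_{\ell=0}^{2M}(-1)^\ell q^{\ell u}$, the summand collapses to $\sum_{k=0}^{2i}\sum_{\ell=0}^{2M}(-1)^\ell q^{(k+\ell)u}$; a single application of the Jacobi triple product~\eqref{jtp} in base $q^{2r}$ then gives $c_{2m}$. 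For $m=2M+1$, with $v=2M+1+2j$ the bracket is $1-q^{(M+1)v}$, and since $(1-q^{(M+1)v})/(1-q^v)=\sum_{\ell=0}^{M}q^{\ell v}$ the summand factors as $\bigl(1-q^{(2i+1)v/2}\bigr)\sum_{\ell=0}^{M}q^{\ell v}$; applying~\eqref{jtp} to each of the two resulting theta sums and then sending $\ell\mapsto M-\ell$ in the second recombines them into the single sum defining $c_{2m+1}$.

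The main obstacle is the odd case $m=2M+1$, where the half-integer powers $q^{(2i+1)v/2}$ persist through the symmetrisation. The two Jacobi-triple-product contributions then carry arguments differing by $q^{1/2}$, and they only merge after the substitution $\ell\mapsto M-\ell$ and the extraction of the common prefactor $(-1)^mq^{(1-r)m^2+(1+2i-2r)m/2}$; checking that the two theta products fuse into a single sum (cancelling the factor $2$ present in the even case) is the delicate bookkeeping, entirely parallel to the handling of $a_{2m+1}$ in the proof of Theorem~\ref{thm:mb}.
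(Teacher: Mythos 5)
Your proposal is correct and takes essentially the same route as the paper: the authors likewise apply Corollary~\ref{coro:bilatbaileylattice2} to the pair~\eqref{mbubp} with $a=q^m$, $b=-q^{m/2}$, $c\to\infty$ and divide by $(q)_m$, and your bilateral sum for $c_m$ (after the cancellation $(b)_j=(a/b)_j$ and $\frac{1-bq^j}{b-aq^j}=-q^{-m/2}$) is exactly their equation~\eqref{cm1}. Your subsequent steps — symmetrising via $j\mapsto -j-m$ to insert $\frac12\bigl(1+(-1)^mq^{(m+2j)(m+1)/2}\bigr)$, splitting on the parity of $m$, expanding the finite geometric series, applying~\eqref{jtp}, and merging the two theta families in the odd case through $\ell\mapsto m-\ell$ after extracting the prefactor $(-1)^mq^{(1-r)m^2+(1+2i-2r)m/2}$ — reproduce the paper's passage from~\eqref{cm0} to $c_{2m}$ and $c_{2m+1}$ essentially verbatim.
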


\begin{proof}
We start from the bilateral Bailey pair~\eqref{mbubp} with $a=q^m$, to which we apply Corollary~\ref{coro:bilatbaileylattice2} with $a=q^m$, $b=-q^{m/2}$, $c\to\infty$ and divide both sides by $(q)_m$. The left-hand side is the desired one. The right-hand side is equal to
\begin{equation}\label{cm1}
c_m=\frac{\bigl(-q^{m/2}\bigr)_\infty}{(q)_\infty}\sum_{j\in\mathbb{Z}}(-1)^jq^{rj^2-ij+mrj-mj/2}\frac{1-q^{(m+2j)(2i+1)/2}}{1-q^{m+2j}}.
\end{equation}
As in the proof of Theorem~\ref{thm:mb}, shifting the index $j$ to $-j-m$ above and adding the result with~\eqref{cm1} yields after rearranging
\begin{align}
c_{m}={}&\frac{\bigl(-q^{m/2}\bigr)_\infty}{2(q)_\infty}\sum_{j\in\mathbb{Z}}(-1)^jq^{rj^2-ij+mrj-mj/2}\frac{1-q^{(m+2j)(2i+1)/2}}{1-q^{m+2j}}\nonumber\\
&\times\bigl(1+(-1)^mq^{(m+2j)(m+1)/2}\bigr).\label{cm0}
\end{align}
This gives
\begin{equation*}
c_{2m}=\frac{(-q^{m})_\infty}{2(q)_\infty}\sum_{j\in\mathbb{Z}}(-1)^jq^{rj^2-ij+2mrj-mj}\frac{1-q^{(m+j)(2i+1)}}{1-q^{m+j}}\frac{1+q^{(m+j)(2m+1)}}{1+q^{m+j}},
\end{equation*}
in which we can expand both denominators in geometric series and obtain the desired result by using~\eqref{jtp}. Equation~\eqref{cm0} also yields
\begin{align*}
c_{2m+1}={}&\frac{\bigl(-q^{(2m+1)/2}\bigr)_\infty}{2(q)_\infty}\sum_{j\in\mathbb{Z}}(-1)^jq^{rj^2-ij+(2m+1)rj-(2m+1)j/2}\\
&\times\bigl(1-q^{(2m+2j+1)(2i+1)/2}\bigr)\frac{1-q^{(2m+2j+1)(m+1)}}{1-q^{2m+2j+1}},
\end{align*}
and the result follows after expanding the denominator in a geometric series and using manipulations similar to the ones for $a_{2m+1}$ in the proof of Theorem~\ref{thm:mb}.
\end{proof}

 Taking $m=0$ in~\eqref{mbr-3.8} forces the index $s_r$ to be $0$, therefore we obtain the following identity, which seems to be new.
\begin{Corollary}\label{coro:new1}
Let $r \geq 2$ and $0 \leq i \leq r-1$ be two integers. We have
\begin{gather*}%\label{3.8new}
\sum_{s_1\geq\dots\geq s_{r-1}\geq0}\frac{q^{s_1^2/2+s_2^2+\dots+s_{r-1}^2+s_1/2-(s_1+\dots+s_{i})}(-1)_{s_1}}{(q)_{s_1-s_2}\cdots(q)_{s_{r-2}-s_{r-1}}(q)_{s_{r-1}}} \\
\qquad=\frac{(-q)_\infty}{(q)_\infty}\sum_{k=0}^{2i}\bigl(q^{2r},q^{r-i+k},q^{r+i-k};q^{2r}\bigr)_\infty.
\end{gather*}
\end{Corollary}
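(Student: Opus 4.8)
The plan is to obtain Corollary~\ref{coro:new1} as the specialization $m=0$ of Theorem~\ref{thm:mbr-3.8}, which is already established. Since that theorem is in hand, the whole argument reduces to setting $m=0$ on both sides of~\eqref{mbr-3.8} and simplifying; there is no new machinery to invoke.

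First I would treat the left-hand side. Setting $m=0$ turns the $q$-binomial coefficient $\left[{m+s_r\atop m+2s_r}\right]$ into $\left[{s_r\atop 2s_r}\right]$, which by the convention $\bigl[{N\atop j}\bigr]=0$ for $j>N$ (recalled in the introduction) vanishes unless $0\leq 2s_r\leq s_r$, forcing $s_r=0$. With $s_r=0$ the lower summation bound $-\lfloor m/2\rfloor=0$ is automatically respected, the factor $(-1)^{s_r}q^{\binom{s_r}{2}}$ equals $1$, the surviving $q$-binomial equals $1$, every occurrence of $q^{m(\cdots)}$ is trivial, and the term $s_r^2$ drops from the exponent. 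Moreover $\bigl(-q^{m/2}\bigr)_{s_1}$ becomes $(-1)_{s_1}$ and the final denominator $(q)_{s_{r-1}-s_r}$ becomes $(q)_{s_{r-1}}$. Collecting these simplifications reproduces exactly the left-hand side of Corollary~\ref{coro:new1}.

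Next I would evaluate the right-hand side $c_0$, that is $c_{2m}$ at $m=0$. The inner sum over $\ell$ now runs over $0\leq\ell\leq 2m=0$, hence reduces to the single term $\ell=0$, while $q^{mk+m\ell}=1$; thus only the sum over $0\leq k\leq 2i$ remains, each summand being $\bigl(q^{2r},q^{r-i+k},q^{r+i-k};q^{2r}\bigr)_\infty$, and the prefactor $(-q^m)_\infty$ becomes $(-1)_\infty=(-1;q)_\infty$. The one computational point to get right is the elementary identity $(-1;q)_\infty=2(-q;q)_\infty$, obtained by splitting off the $j=0$ factor $1+q^0=2$ from $\prod_{j\geq0}(1+q^j)$. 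This converts $c_0=\frac{(-1)_\infty}{2(q)_\infty}\sum_{k=0}^{2i}(\cdots)$ into $\frac{(-q)_\infty}{(q)_\infty}\sum_{k=0}^{2i}(\cdots)$, the $2$ cancelling the $1/2$, which is precisely the right-hand side of the corollary. No genuine obstacle arises: the proof is a direct specialization, and the only things to verify with care are that the $q$-binomial forces $s_r=0$ and that the prefactor collapses via $(-1;q)_\infty=2(-q;q)_\infty$.
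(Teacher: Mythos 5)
Your proof is correct and takes essentially the same route as the paper, which obtains Corollary~\ref{coro:new1} precisely by setting $m=0$ in Theorem~\ref{thm:mbr-3.8} and observing that the $q$-binomial coefficient $\left[{s_r\atop 2s_r}\right]$ forces $s_r=0$. The one step the paper leaves implicit, namely that $c_0$ simplifies via $(-1;q)_\infty=2(-q;q)_\infty$ so that the factor $2$ cancels the $1/2$ in the definition of $c_{2m}$, you carry out explicitly and correctly.
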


Taking $m=1$ in~\eqref{mbr-3.8} also yields $s_r$ to be $0$. Replacing $q$ by $q^2$ in the resulting formula therefore yields~\eqref{B3.8} by using
$\bigl(-q;q^2\bigr)_{s_1}=q^{s_1^2}\bigl(-q^{1-2s_{1}};q^2\bigr)_{s_1}$.

\subsubsection[m-version of Ref. 13, equation (3.9)]{$\boldsymbol{m}$-version of \cite[equation (3.9)]{Br80}}
First recall (3.9) in~\cite{Br80}
\begin{gather}
\sum_{s_1\geq\dots\geq s_{r-1}\geq0}\frac{q^{2(s_1^2+\dots+s_{r-1}^2+s_{i+1}+\dots+s_{r-1})}\bigl(-q^{1-2s_{1}};q^2\bigr)_{s_1}}{\bigl(q^2;q^2\bigr)_{s_1-s_2}\cdots
\bigl(q^2;q^2\bigr)_{s_{r-2}-s_{r-1}}\bigl(q^4;q^4\bigr)_{s_{r-1}}}\nonumber\\
\qquad=\frac{\bigl(-q;q^2\bigr)_\infty}{\bigl(q^2;q^2\bigr)_\infty}(q^{4r-2},q^{2i+1},q^{4r-2i-3};q^{4r-2})_\infty,\label{B3.9}
\end{gather}
where $r\geq 2$ and $0\leq i\leq r-1$ are fixed integers. Note that the parameters in Bressoud's work are again renamed $(k,r,i)\to(r,i+1,k)$ to match our notation. Our $m$-version reads as follows.

\begin{Theorem}[$m$-version of the Bressoud identities {\cite[equation (3.9)]{Br80}}]\label{thm:mbr-3.9}
Let $m\geq 0$, $r \geq 2$, and $0 \leq i \leq r-1$ be three integers. We have
\begin{gather}
\sum_{s_1\geq\dots\geq s_{r}\geq-\left\lfloor m/2\right\rfloor}\frac{q^{s_1^2/2+s_2^2+\dots+s_{r}^2+m(s_1/2+s_2+\dots+s_{r-1})+s_1/2-(s_1+\dots+s_{i})}\bigl(-q^{m/2}\bigr)_{s_1} \bigl(-q^{(m+1)/2}\bigr)_{s_{r}}}{(q)_{s_1-s_2}
\cdots(q)_{s_{r-1}-s_{r}}\bigl(-q^{(m+1)/2}\bigr)_{s_{r-1}}}\nonumber\\
\qquad{}\times(-1)^{s_r}q^{-(m+1)s_r/2}\left[{m+s_r\atop m+2s_r}\right]
=d_m,\label{mbr-3.9}
\end{gather}
where
\begin{align*}
d_{2m}={}&\frac{(-q^m)_\infty}{2(q)_\infty}\sum_{k=0}^{2i}\sum_{\ell=0}^{2m}(-1)^\ell q^{mk+m\ell}\\
&\times(q^{2r-1},q^{2mr+r-i-2m+k+\ell-1/2},q^{r+i+2m-2mr-k-\ell-1/2};q^{2r-1})_\infty,
\end{align*}
and
\begin{align*}
d_{2m+1}={}&(-1)^mq^{(3-2r)m^2/2+(1+i-r)m}\frac{\bigl(-q^{(2m+1)/2}\bigr)_\infty}{(q)_\infty}\\
&\times\sum_{\ell=0}^{m}q^{\ell}\bigl(q^{2r-1},q^{2r-i-m+2\ell-3/2},q^{i+m-2\ell+1/2};q^{2r-1}\bigr)_\infty.
\end{align*}
\end{Theorem}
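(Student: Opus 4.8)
The plan is to mirror the proofs of Theorems~\ref{thm:mbr-3.6} and~\ref{thm:mbr-3.8}, applying Corollary~\ref{coro:bilatbaileylattice2} directly to the bilateral Bailey pair~\eqref{mbubp} with $a=q^m$, but now keeping \emph{both} auxiliary parameters finite: $b=-q^{m/2}$ and $c=-q^{(m+1)/2}$. With these choices $aq/c=-q^{(m+1)/2}$, so the factors $(c)_{s_r}=\bigl(-q^{(m+1)/2}\bigr)_{s_r}$ and $(aq/c)_{s_{r-1}}=\bigl(-q^{(m+1)/2}\bigr)_{s_{r-1}}$ appear exactly as in the left-hand side of~\eqref{mbr-3.9}, while $b=-q^{m/2}$ supplies the factor $\bigl(-q^{m/2}\bigr)_{s_1}$ as in~\eqref{mbr-3.8}; this is also the feature that, at $m=1$ and after $q\to q^2$, turns $(q)_{s_{r-1}}\bigl(-q^{(m+1)/2}\bigr)_{s_{r-1}}$ into the factor $\bigl(q^4;q^4\bigr)_{s_{r-1}}$ of~\eqref{B3.9}. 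Dividing both sides by $(q)_m$ then reproduces the desired left-hand side. The decisive point is that keeping both $b$ and $c$ finite prevents either from contributing a factor $q^{\binom{j}{2}}$ on the right-hand side, so the coefficient of $j^2$ in the resulting bilateral sum is $r-\tfrac12=(2r-1)/2$ rather than $r$; this is precisely what yields the odd modulus $2r-1$ (equivalently $4r-2$ after $q\to q^2$) seen in $d_{2m}$ and $d_{2m+1}$.

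Next I would simplify the right-hand side. Since $b=-q^{m/2}$ and $a=q^m$, the quotient $\frac{1-bq^j}{b-aq^j}=-q^{-m/2}$ is independent of $j$, so the bracket $1+a^{i+1}q^{j(2i+1)}\frac{1-bq^j}{b-aq^j}$ collapses to $1-q^{(m+2j)(2i+1)/2}$, exactly as in Theorems~\ref{thm:mbr-3.7} and~\ref{thm:mbr-3.8}; moreover $(b)_j/(a/b)_j=(c)_j/(aq/c)_j=1$. Collecting the surviving powers gives
\begin{equation*}
d_m=\frac{\bigl(-q^{m/2}\bigr)_\infty}{(q)_\infty}\sum_{j\in\mathbb{Z}}(-1)^jq^{(r-1/2)j^2+(m(r-1)-i)j}\,\frac{1-q^{(m+2j)(2i+1)/2}}{1-q^{m+2j}}.
\end{equation*}
As in Theorem~\ref{thm:mb}, the factor $1/(1-q^{m+2j})$ obstructs a direct use of~\eqref{jtp}, so I would symmetrise by replacing $j$ with $-j-m$ and averaging; since the coefficient $A=(r-\tfrac12)m-\bigl(m(r-1)-i\bigr)=\tfrac{m}{2}+i$ governs the shift, the summand is multiplied by $(-1)^mq^{(m+2j)(m+1)/2}$, producing the factor $1+(-1)^mq^{(m+2j)(m+1)/2}$ and forcing the split into $m$ even and $m$ odd.

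For even $m$ the bracket and denominator combine into a finite polynomial in $q^{(m+2j)/2}$, namely a double sum over $0\le k\le 2i$ and $0\le\ell\le m$ with sign $(-1)^\ell$, and term-by-term use of~\eqref{jtp} at base $q^{2r-1}$ — writing $q^{(r-1/2)j^2}=q^{(2r-1)\binom{j}{2}+(2r-1)j/2}$, which is what manufactures the half-integer arguments — gives $d_{2m}$ with no further work. The hard part will be the odd case $d_{2m+1}$: here the symmetrising factor $1-q^{(m+2j)(m+1)/2}$ combines with $1/(1-q^{m+2j})$ to give $\sum_{\ell=0}^{m}q^{(m+2j)\ell}$, while the half-integer numerator $1-q^{(m+2j)(2i+1)/2}$ must be kept split into its two terms, each of which yields a family of triple products indexed by $0\le\ell\le m$. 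Exactly as for $a_{2m+1}$ in Theorem~\ref{thm:mb} and $c_{2m+1}$ in Theorem~\ref{thm:mbr-3.8}, a spurious power of $q^{2r-1}$ in the arguments of these products must be absorbed back into the Pochhammer symbols, after which the substitution $\ell\mapsto m-\ell$ folds the two families onto one another. Verifying that this folding collapses them to the single sum $\sum_{\ell=0}^{m}$ with the stated half-integer arguments $q^{2r-i-m+2\ell-3/2}$ and $q^{i+m-2\ell+1/2}$, and that the accumulated prefactor is the claimed $(-1)^mq^{(3-2r)m^2/2+(1+i-r)m}\bigl(-q^{(2m+1)/2}\bigr)_\infty/(q)_\infty$, is the only genuinely delicate bookkeeping; everything else is a routine transcription of the earlier proofs.
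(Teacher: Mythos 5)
Your proposal is correct and follows the paper's proof essentially step for step: apply Corollary~\ref{coro:bilatbaileylattice2} to the bilateral Bailey pair~\eqref{mbubp} with $a=q^m$, $b=-q^{m/2}$ and $c=-q^{(m+1)/2}$ (so that $aq/c=-q^{(m+1)/2}$), obtain the bilateral sum~\eqref{dm1}, symmetrise via $j\mapsto -j-m$ to produce the factor $1+(-1)^mq^{(m+2j)(m+1)/2}$, split according to the parity of $m$, expand the geometric series, and finish with~\eqref{jtp} plus the same $q^{2r-1}$-absorption and $\ell\mapsto m-\ell$ folding used for $a_{2m+1}$. The only divergence is that the paper's proof text specialises $c=q^{(m+1)/2}$, whereas your choice $c=-q^{(m+1)/2}$ is the one actually consistent with the left-hand side of~\eqref{mbr-3.9} and with~\eqref{dm1} (it supplies both the sign $(-1)^j$ and the Pochhammer symbols $\bigl(-q^{(m+1)/2}\bigr)_{s_r}$, $\bigl(-q^{(m+1)/2}\bigr)_{s_{r-1}}$, and at $m=1$ the factor $\bigl(q^4;q^4\bigr)_{s_{r-1}}$ of~\eqref{B3.9}), so you have in effect corrected a sign typo in the paper's proof.
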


\begin{proof}
We start from the bilateral Bailey pair~\eqref{mbubp} with $a=q^m$, to which we apply Corollary~\ref{coro:bilatbaileylattice2} with $a=q^m$, $b=-q^{m/2}$, $c=q^{(m+1)/2}$ and divide both sides by $(q)_m$. The left-hand side is the desired one. The right-hand side is equal to
\begin{equation}\label{dm1}
d_m=\frac{\bigl(-q^{m/2}\bigr)_\infty}{(q)_\infty}\sum_{j\in\mathbb{Z}}(-1)^jq^{(r-1)j^2-ij+m(r-1)j+j^2/2}\frac{1-q^{(m+2j)(2i+1)/2}}{1-q^{m+2j}}.
\end{equation}
As in the proof of Theorem~\ref{thm:mb}, shifting the index $j$ to $-j-m$ above and adding the result with~\eqref{dm1} yields after rearranging
\begin{align}
d_{m}={}&\frac{\bigl(-q^{m/2}\bigr)_\infty}{2(q)_\infty}\sum_{j\in\mathbb{Z}}(-1)^jq^{(r-1)j^2-ij+m(r-1)j+j^2/2}\frac{1-q^{(m+2j)(2i+1)/2}}{1-q^{m+2j}}\nonumber\\
&\times\bigl(1+(-1)^mq^{(m+2j)(m+1)/2}\bigr).\label{dm0}
\end{align}
This gives
\begin{equation*}
d_{2m}=\frac{(-q^{m})_\infty}{2(q)_\infty}\sum_{j\in\mathbb{Z}}(-1)^jq^{(r-1)j^2-ij+2m(r-1)j+j^2/2}\frac{1-q^{(m+j)(2i+1)}}{1-q^{m+j}}\frac{1+q^{(m+j)(2m+1)}}{1+q^{m+j}},
\end{equation*}
in which we can expand both denominators in geometric series and obtain the desired result by using~\eqref{jtp}. Equation~\eqref{cm0} also yields
\begin{align*}
d_{2m+1}={}&\frac{\bigl(-q^{(2m+1)/2}\bigr)_\infty}{2(q)_\infty}\sum_{j\in\mathbb{Z}}(-1)^jq^{(r-1)j^2-ij+(2m+1)(r-1)j+j^2/2}\\
&\times\bigl(1-q^{(2m+2j+1)(2i+1)/2}\bigr)\frac{1-q^{(2m+2j+1)(m+1)}}{1-q^{2m+2j+1}},
\end{align*}
and the result follows after expanding the denominator in a geometric series and using manipulations similar to the ones for $a_{2m+1}$ in the proof of Theorem~\ref{thm:mb}.
\end{proof}

 Taking $m=0$ in~\eqref{mbr-3.9} forces the index $s_r$ to be $0$, therefore we obtain the following identity, which seems to be new.
\begin{Corollary}\label{coro:new2}
Let $r \geq 2$ and $0 \leq i \leq r-1$ be two integers. We have
\begin{gather*}%\label{3.9new}
\sum_{s_1\geq\dots\geq s_{r-1}\geq0}\frac{q^{s_1^2/2+s_2^2+\dots+s_{r-1}^2+s_1/2-(s_1+\dots+s_{i})}(-1)_{s_1}}{(q)_{s_1-s_2}\cdots(q)_{s_{r-2}-s_{r-1}}(q)_{s_{r-1}}\bigl(-q^{1/2}\bigr)_{s_{r-1}}} \\
\qquad=\frac{(-q)_\infty}{(q)_\infty}\sum_{k=0}^{2i}\bigl(q^{2r-1},q^{r-i+k-1/2},q^{r+i-k-1/2};q^{2r-1}\bigr)_\infty.
\end{gather*}
\end{Corollary}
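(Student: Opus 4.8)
The plan is to read off Corollary~\ref{coro:new2} as the $m=0$ specialisation of Theorem~\ref{thm:mbr-3.9}, whose identity~\eqref{mbr-3.9} is already established. The whole argument amounts to observing that setting $m=0$ collapses the innermost summation to a single term and then matching both sides term by term; there is no genuinely hard step, only careful bookkeeping.

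First I would set $m=0$ in~\eqref{mbr-3.9}. The lower summation bound becomes $s_r\geq-\lfloor 0/2\rfloor=0$, and the $q$-binomial coefficient reduces to $\bigl[{s_r\atop 2s_r}\bigr]$, which by the convention $\bigl[{N\atop j}\bigr]=0$ for $j>N$ vanishes unless $2s_r\leq s_r$, i.e.\ $s_r\leq0$. Combined with $s_r\geq0$ this forces $s_r=0$, so only that term survives and the sum runs over $s_1\geq\dots\geq s_{r-1}\geq0$. Substituting $s_r=0$ and $m=0$ into the summand, the factors $\bigl(-q^{(m+1)/2}\bigr)_{s_r}$ and $(-1)^{s_r}q^{-(m+1)s_r/2}$ become $1$, the denominator factor $(q)_{s_{r-1}-s_r}$ becomes $(q)_{s_{r-1}}$, the factor $\bigl(-q^{(m+1)/2}\bigr)_{s_{r-1}}$ becomes $\bigl(-q^{1/2}\bigr)_{s_{r-1}}$, and the numerator factor $\bigl(-q^{m/2}\bigr)_{s_1}$ becomes $(-1)_{s_1}$. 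This reproduces exactly the left-hand side of the Corollary.

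For the right-hand side, $m=0$ falls under the even case, so I would evaluate the formula for $d_{2m}$ with $m=0$, that is $d_0$. The inner $\ell$-sum then reduces to the single term $\ell=0$, every power $q^{mk}$ and $q^{m\ell}$ equals $1$, and the three infinite-product arguments simplify to $q^{2r-1}$, $q^{r-i+k-1/2}$, and $q^{r+i-k-1/2}$. The one simplification that deserves attention is the prefactor: $\bigl(-q^m\bigr)_\infty$ becomes $(-1;q)_\infty=2(-q)_\infty$, so that $\frac{(-q^m)_\infty}{2(q)_\infty}$ collapses to $\frac{(-q)_\infty}{(q)_\infty}$. This yields precisely the claimed right-hand side. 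Thus the only subtlety in the entire derivation is this factor-of-two identity $(-1;q)_\infty=2(-q)_\infty$; everything else is a direct substitution.
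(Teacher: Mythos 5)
Your proposal is correct and follows exactly the paper's route: the authors also obtain Corollary~\ref{coro:new2} by setting $m=0$ in~\eqref{mbr-3.9}, noting that $\bigl[{s_r\atop 2s_r}\bigr]$ forces $s_r=0$, and evaluating $d_0$ from the even-case formula. Your extra care with the prefactor, using $(-1;q)_\infty=2(-q;q)_\infty$ to absorb the factor $\tfrac{1}{2}$, is precisely the bookkeeping the paper leaves implicit.
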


Taking $m=1$ in~\eqref{mbr-3.9} also yields $s_r$ to be $0$. Replacing $q$ by $q^2$ in the resulting formula therefore yields~\eqref{B3.9}.

\section[Bilateral N-extensions]{Bilateral $\boldsymbol{N}$-extensions}\label{sec:newBailey}

\subsection{Results}
Using our new bilateral Bailey lattice given in Theorem \ref{thm:newbilatbaileylattice} and Lemma \ref{lem:baileylattices}, we were able to deduce Theorem \ref{thm:multibaileylattice}, a very general bilateral $N$-Bailey lattice with parameters $b_1, \dots , b_N$.

The proof of Theorem \ref{thm:multibaileylattice}, quite technical, is left for the next subsection. However, some of its particular cases, which correspond to the two key Lemmas \ref{lem:key1} and \ref{lem:key2}, are much more simple to state (and to prove), and imply two bilateral $N$-extensions of the Bailey lattice found by Warnaar in~\cite[Theorems~3.1 and~3.2]{W}. Hence we state them separately here.

Since $e_M(0,\dots,0)=\delta_{M,0}$,
\begin{equation*}
f_{N,j,n}(0,\ldots,0) =\sum_{u\in \mathbb{Z}} a^{j-u}q^{(0-u)(n-u)+(j-u)(n-N)} \begin{bmatrix}
0\\
u
\end{bmatrix} \begin{bmatrix}
N\\
j-u
\end{bmatrix} = a^j q^{j(n-N)}\begin{bmatrix}
N\\
j
\end{bmatrix},
\end{equation*}
and Theorem~\ref{thm:multibaileylattice} reduces to the following.

\begin{Theorem}[first new $N$-Bailey lattice]\label{thm:Nbaileylattice1}
Let $(\alpha_n, \beta_n)$ be a bilateral Bailey pair relative to $a$. For all $N\geq 0$, define the pair \smash{$\bigl(\alpha^{(N)}_n, \beta^{(N)}_n\bigr)$} by
\[%\label{eq:multibaileylattice1}
\alpha^{(N)}_n=\bigl(1-aq^{2n-N}\bigr)\bigl(aq^{1-N}\bigr)_N\sum_{j\in \mathbb{Z}}(-1)^j \frac{a^j q^{(2n-N)j-j(j+1)/2}}{\bigl(aq^{2n-N-j}\bigr)_{N+1}}\left[{N\atop j}\right]\alpha_{n-j},
\]
and
\smash{$
\beta^{(N)}_n=\beta_n$}.
Then \smash{$\bigl(\alpha^{(N)}_n, \beta^{(N)}_n\bigr)$} is a bilateral Bailey pair relative to $aq^{-N}$.
\end{Theorem}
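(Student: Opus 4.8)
The plan is to prove Theorem~\ref{thm:Nbaileylattice1} by induction on $N$, simply iterating the key Lemma~\ref{lem:key1} exactly $N$ times. Since $\beta^{(N)}_n=\beta_n$ and Lemma~\ref{lem:key1} leaves the $\beta$-sequence unchanged, the $\beta$-part requires nothing beyond applying the lemma at each step. The base case $N=0$ is immediate: here $\left[{0\atop j}\right]=\delta_{j,0}$ and $(aq)_0=1$, so only the $j=0$ term survives and gives $\alpha^{(0)}_n=(1-aq^{2n})\cdot\frac{1}{1-aq^{2n}}\,\alpha_n=\alpha_n$, as required. For the inductive step, I would apply Lemma~\ref{lem:key1} with $a$ replaced by $aq^{1-N}$ (the base of the level-$(N-1)$ pair), producing a bilateral Bailey pair relative to $aq^{-N}$ whose $\alpha$-part is
\[
\alpha^{(N)}_n=\bigl(1-aq^{1-N}\bigr)\!\left(\frac{\alpha^{(N-1)}_n}{1-aq^{2n+1-N}}-\frac{aq^{2n-1-N}\,\alpha^{(N-1)}_{n-1}}{1-aq^{2n-1-N}}\right).
\]

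Substituting the inductive formula for $\alpha^{(N-1)}_n$ and $\alpha^{(N-1)}_{n-1}$, I expect two clean cancellations: the prefactor $1-aq^{2n-N+1}$ of $\alpha^{(N-1)}_n$ cancels the denominator $1-aq^{2n+1-N}$, and the prefactor $1-aq^{2n-N-1}$ of $\alpha^{(N-1)}_{n-1}$ cancels $1-aq^{2n-1-N}$. The surviving scalar prefactors then combine through $(1-aq^{1-N})(aq^{2-N})_{N-1}=(aq^{1-N})_N$, which is precisely the prefactor demanded at level $N$. After shifting the summation index $j\mapsto j-1$ in the term inherited from $\alpha^{(N-1)}_{n-1}$, both sums run over $\alpha_{n-j}$, and I would put them over the common denominator $(aq^{2n-N-j})_{N+1}$ using the two factorisations
\[
(aq^{2n-N-j})_{N+1}=\bigl(1-aq^{2n-N-j}\bigr)(aq^{2n-N+1-j})_N=\bigl(1-aq^{2n-j}\bigr)(aq^{2n-N-j})_N.
\]

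A short bookkeeping of the $q$-powers shows that both terms share the common factor $q^{(2n-N)j-j(j+1)/2}$, so the whole identity collapses, term by term in $j$, to the $q$-binomial relation
\[
q^{j}\bigl(1-aq^{2n-N-j}\bigr)\left[{N-1\atop j}\right]+\bigl(1-aq^{2n-j}\bigr)\left[{N-1\atop j-1}\right]=\bigl(1-aq^{2n-N}\bigr)\left[{N\atop j}\right].
\]
The main (and essentially only) obstacle is verifying this last $q$-Pascal-type identity. I would settle it using both forms of the $q$-Pascal recurrence, namely $\left[{N\atop j}\right]=\left[{N-1\atop j-1}\right]+q^{j}\left[{N-1\atop j}\right]$ and $\left[{N\atop j}\right]=\left[{N-1\atop j}\right]+q^{N-j}\left[{N-1\atop j-1}\right]$: grouping the terms $q^{j}\left[{N-1\atop j}\right]+\left[{N-1\atop j-1}\right]$ reproduces $\left[{N\atop j}\right]$ by the first rule, while the remaining contribution factors as $-aq^{2n-N}\bigl(\left[{N-1\atop j}\right]+q^{N-j}\left[{N-1\atop j-1}\right]\bigr)=-aq^{2n-N}\left[{N\atop j}\right]$ by the second rule, yielding exactly $(1-aq^{2n-N})\left[{N\atop j}\right]$. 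The convergence hypotheses propagate automatically at each stage from those of Lemma~\ref{lem:key1}, so no additional analytic argument is needed; alternatively one could observe that this theorem is the $b_1=\dots=b_N=0$ specialisation of Theorem~\ref{thm:multibaileylattice}, but the iterative route above gives a self-contained and considerably shorter proof.
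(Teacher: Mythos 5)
Your proof is correct. The base case, the two cancellations, the prefactor identity $\bigl(1-aq^{1-N}\bigr)\bigl(aq^{2-N}\bigr)_{N-1}=\bigl(aq^{1-N}\bigr)_N$, the two factorisations of $\bigl(aq^{2n-N-j}\bigr)_{N+1}$, and the exponent bookkeeping (after the shift $j\mapsto j-1$ the term coming from $\alpha^{(N-1)}_{n-1}$ indeed carries exactly $q^{(2n-N)j-j(j+1)/2}$, while the other term carries an extra $q^{j}$) all check out, and the whole induction step reduces, coefficient by coefficient, to
\begin{equation*}
q^{j}\bigl(1-aq^{2n-N-j}\bigr)\left[{N-1\atop j}\right]+\bigl(1-aq^{2n-j}\bigr)\left[{N-1\atop j-1}\right]=\bigl(1-aq^{2n-N}\bigr)\left[{N\atop j}\right],
\end{equation*}
which follows from the two $q$-Pascal rules \eqref{eq:pascal1} and \eqref{eq:pascal2} exactly as you group the terms, and remains valid at the boundary values of $j$ by the convention that $q$-binomial coefficients vanish outside $0\leq j\leq N$. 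This is, however, not the paper's route for this statement: the paper obtains Theorem~\ref{thm:Nbaileylattice1} in one line as the specialisation $b_1=\dots=b_N=0$ of Theorem~\ref{thm:multibaileylattice}, using $e_M(0,\dots,0)=\delta_{M,0}$ so that $f_{N,j,n}(0,\ldots,0)=a^jq^{j(n-N)}\bigl[{N\atop j}\bigr]$ --- the alternative you mention in your closing sentence. Theorem~\ref{thm:multibaileylattice} is itself proved by precisely your induction scheme, but in full generality: Lemma~\ref{lem:baileylattices} with $b=b_{N+1}$ plays the role of Lemma~\ref{lem:key1}, and your $q$-Pascal-type identity is exactly the $b_1=\dots=b_{N+1}=0$ shadow of the recurrence in Proposition~\ref{prop:rec}, whose general proof needs the two technical Lemmas~\ref{lem:tech1} and~\ref{lem:tech2}. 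So your argument is a legitimately shorter, self-contained proof of this particular theorem, since all the elementary-symmetric-function machinery collapses at $b=0$; what the paper's detour buys is the multi-parameter lattice itself, from which both Theorem~\ref{thm:Nbaileylattice1} and Theorem~\ref{thm:Nbaileylattice2} (the $b\to\infty$ degeneration, which your method would prove analogously via Lemma~\ref{lem:key2}) follow from a single induction.
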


Applying first Theorem~\ref{thm:Nbaileylattice1} to a bilateral Bailey pair relative to $a$, and then Theorem~\ref{thm:bilatbaileylemma} with $a$ replaced by $aq^{-N}$ to the resulting Bailey pair, we immediately derive the following result, whose unilateral case is due to Warnaar~\cite[Theorem~3.1]{W}.

\begin{Theorem}[Warnaar, bilateral version]\label{thm:w1}
Let $(\alpha_n, \beta_n)$ be a bilateral Bailey pair relative to $a$, and $N\geq 0$ be a fixed integer. Then $(\alpha'_n, \beta'_n)$ is a bilateral Bailey pair relative to $aq^{-N}$, where
\begin{align*}
\alpha'_n={}&\frac{(\rho,\sigma)_n\bigl(aq^{1-N}/\rho\sigma\bigr)^n}{
\bigl(aq^{1-N}/\rho,aq^{1-N}/\sigma\bigr)_n}\bigl(1-aq^{2n-N}\bigr)\bigl(aq^{1-N}\bigr)_N\\
&\times\sum_{j=0}^N(-1)^j\frac{a^jq^{(2n-N)j-j(j+1)/2}}{\bigl(aq^{2n-N-j}\bigr)_{N+1}}\left[{N\atop j}\right]\alpha_{n-j},
\end{align*}
and
\begin{equation*}
\beta'_n=\sum_{j=0}^n\frac{(\rho,\sigma)_j\bigl(aq^{1-N}/\rho\sigma\bigr)_{n-j}\bigl(aq^{1-N}/\rho\sigma\bigr)^j}{(q)_{n-j}\bigl(aq^{1-N}/\rho,aq^{1-N}/\sigma\bigr)_n} \beta_j.
\end{equation*}
\end{Theorem}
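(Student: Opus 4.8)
The plan is to obtain this result exactly as announced just before the statement, namely as a direct composition of two theorems already at our disposal. First I would apply Theorem~\ref{thm:Nbaileylattice1} to the given bilateral Bailey pair $(\alpha_n,\beta_n)$ relative to $a$; this produces a bilateral Bailey pair $\bigl(\alpha^{(N)}_n,\beta^{(N)}_n\bigr)$ relative to $aq^{-N}$, with $\beta^{(N)}_n=\beta_n$ and $\alpha^{(N)}_n$ equal to the sum displayed in that theorem. Then I would feed this new pair into the bilateral Bailey lemma (Theorem~\ref{thm:bilatbaileylemma}), taking care to replace $a$ by $aq^{-N}$ throughout, since the input pair is now relative to $aq^{-N}$. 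The output is again a bilateral Bailey pair relative to $aq^{-N}$, which is precisely the assertion of the theorem.

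The heart of the verification is the bookkeeping of the substitution $a\mapsto aq^{-N}$ inside Theorem~\ref{thm:bilatbaileylemma}. Under this replacement, the three quantities $aq/\rho$, $aq/\sigma$, and $aq/\rho\sigma$ appearing in the lemma become $aq^{1-N}/\rho$, $aq^{1-N}/\sigma$, and $aq^{1-N}/\rho\sigma$, respectively. The lemma then yields $\alpha'_n=\dfrac{(\rho,\sigma)_n\bigl(aq^{1-N}/\rho\sigma\bigr)^n}{\bigl(aq^{1-N}/\rho,aq^{1-N}/\sigma\bigr)_n}\,\alpha^{(N)}_n$; substituting the explicit value of $\alpha^{(N)}_n$ coming from Theorem~\ref{thm:Nbaileylattice1} reproduces verbatim the stated formula for $\alpha'_n$. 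Likewise, because $\beta^{(N)}_j=\beta_j$, the $\beta'_n$ furnished by the lemma is exactly the claimed sum. Two minor points deserve only a remark rather than a calculation: the sum defining $\alpha^{(N)}_n$ runs over $j\in\Z$, but the $q$-binomial coefficient $\left[{N\atop j}\right]$ forces $0\le j\le N$, so it is genuinely the finite sum $\sum_{j=0}^N$ written in the statement; and the convergence hypotheses attached to the two input theorems simply carry over, ensuring absolute convergence of all the bilateral series involved.

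I do not expect any genuine computational obstacle here: once Theorems~\ref{thm:Nbaileylattice1} and~\ref{thm:bilatbaileylemma} are in place, the result is immediate. The only place where care is needed is the $a\mapsto aq^{-N}$ substitution in the Bailey-lemma parameters together with the matching of the two expressions for $\alpha$; a misplaced power of $q$ or sign there would be the likeliest source of error, so I would double-check the $aq^{1-N}$ exponents at that single step.
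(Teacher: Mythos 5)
Your proposal is correct and coincides with the paper's own (one-sentence) derivation: the paper obtains Theorem~\ref{thm:w1} precisely by applying Theorem~\ref{thm:Nbaileylattice1} first and then Theorem~\ref{thm:bilatbaileylemma} with $a$ replaced by $aq^{-N}$, exactly as you describe, including the observation that the $q$-binomial coefficient truncates the sum to $0\le j\le N$. Your bookkeeping of the substitution $a\mapsto aq^{-N}$ in the parameters $aq/\rho$, $aq/\sigma$, $aq/\rho\sigma$ is exactly the verification the paper leaves implicit, so nothing further is needed.
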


Note that the bilateral Bailey lattice given in Theorem~\ref{thm:bilatbaileylattice} corresponds to the case $N=1$ in Theorem \ref{thm:w1}.

On the other hand, applying first Theorem~\ref{thm:bilatbaileylemma} to a bilateral Bailey pair relative to $a$, and then Theorem~\ref{thm:Nbaileylattice1} to the resulting bilateral Bailey pair, we derive the following second result, whose unilateral version is also due to Warnaar~\cite[Theorem~3.2]{W}.
\begin{Theorem}[Warnaar, bilateral version]\label{thm:w2}
Let $(\alpha_n, \beta_n)$ be a bilateral Bailey pair relative to $a$, and $N\geq 0$ be a fixed integer. Then $(\alpha'_n, \beta'_n)$ is a bilateral Bailey pair relative to $aq^{-N}$, where
\begin{align*}
\alpha'_n={}&\bigl(1-aq^{2n-N}\bigr)\bigl(aq^{1-N}\bigr)_N\sum_{j=0}^N(-1)^j\\
&\times\frac{a^jq^{(2n-N)j-j(j+1)/2}}{\bigl(aq^{2n-N-j}\bigr)_{N+1}}\left[{N\atop j}\right]\frac{(\rho,\sigma)_{n-j}(aq/\rho\sigma)^{n-j}}{
(aq/\rho,aq/\sigma)_{n-j}}\alpha_{n-j},
\end{align*}
and
\begin{equation*}
\beta'_n=\sum_{j=0}^n\frac{(\rho,\sigma)_j(aq/\rho\sigma)_{n-j}(aq/\rho\sigma)^j}{(q)_{n-j}(aq/\rho,aq/\sigma)_n} \beta_j.
\end{equation*}
\end{Theorem}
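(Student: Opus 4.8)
The plan is to obtain $(\alpha'_n,\beta'_n)$ as a two-step composition, in exactly the order announced in the paragraph preceding the statement: first apply the bilateral Bailey lemma (Theorem~\ref{thm:bilatbaileylemma}) to the given pair relative to $a$, and then apply the first new $N$-Bailey lattice (Theorem~\ref{thm:Nbaileylattice1}) to the resulting pair to shift the base from $a$ to $aq^{-N}$. The essential structural observation is that Theorem~\ref{thm:bilatbaileylemma} preserves the base, while Theorem~\ref{thm:Nbaileylattice1} changes the base but leaves $\beta$ untouched; composing them in this order threads the parameters exactly into the displayed formulas.

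Concretely, I would first set
\[
\tilde\alpha_n = \frac{(\rho,\sigma)_n(aq/\rho\sigma)^n}{(aq/\rho,aq/\sigma)_n}\alpha_n, \qquad \tilde\beta_n = \sum_{j\leq n}\frac{(\rho,\sigma)_j(aq/\rho\sigma)_{n-j}(aq/\rho\sigma)^j}{(q)_{n-j}(aq/\rho,aq/\sigma)_n}\beta_j,
\]
which by Theorem~\ref{thm:bilatbaileylemma} is a bilateral Bailey pair relative to $a$, subject to the usual absolute convergence conditions. Since the base is unchanged, Theorem~\ref{thm:Nbaileylattice1} applies verbatim to $(\tilde\alpha_n,\tilde\beta_n)$. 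That theorem leaves $\beta$ inert, so the $\beta$-component of the final pair is $\beta'_n=\tilde\beta_n$, which is precisely the displayed $\beta'_n$ (read as $\sum_{j\leq n}$ in the bilateral setting). For the $\alpha$-component, Theorem~\ref{thm:Nbaileylattice1} produces
\[
\alpha'_n = \bigl(1-aq^{2n-N}\bigr)\bigl(aq^{1-N}\bigr)_N\sum_{j\in\Z}(-1)^j\frac{a^jq^{(2n-N)j-j(j+1)/2}}{\bigl(aq^{2n-N-j}\bigr)_{N+1}}\left[{N\atop j}\right]\tilde\alpha_{n-j},
\]
and substituting $\tilde\alpha_{n-j} = \dfrac{(\rho,\sigma)_{n-j}(aq/\rho\sigma)^{n-j}}{(aq/\rho,aq/\sigma)_{n-j}}\alpha_{n-j}$ gives exactly the stated formula; the $q$-binomial $\left[{N\atop j}\right]$ vanishes outside $0\leq j\leq N$, which is why the sum may be written from $0$ to $N$.

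There is essentially no computational obstacle here: the whole argument is a bookkeeping composition of two results already established. The only point that demands care—and the one that distinguishes this theorem from Theorem~\ref{thm:w1}—is the \emph{order} of the two steps. Because the Bailey lemma is applied \emph{before} the base shift, the multiplier carried into $\tilde\alpha_{n-j}$ involves the parameters $aq/\rho$, $aq/\sigma$, $aq/\rho\sigma$ built from the \emph{original} base $a$, rather than the shifted base $aq^{1-N}$ that surfaces in Theorem~\ref{thm:w1} (where the steps are performed in the reverse order). The verification I would perform is therefore a slot-by-slot check that this substitution reproduces the displayed $\alpha'_n$, making sure the exponents of $q$ and the Pochhammer bases match, since that is the only place where a sign or shift error could enter.
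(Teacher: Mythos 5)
Your proposal is correct and is essentially identical to the paper's own proof: the paper obtains Theorem~\ref{thm:w2} precisely by applying Theorem~\ref{thm:bilatbaileylemma} to the pair relative to $a$ and then Theorem~\ref{thm:Nbaileylattice1} (which shifts the base to $aq^{-N}$ while leaving $\beta$ unchanged) to the result, exactly the composition and order you describe. Your two side remarks are also sound, namely that the displayed $\beta'_n$ is to be read as $\sum_{j\leq n}$ in the bilateral setting, and that the $q$-binomial coefficient truncates the $\alpha'$-sum to $0\leq j\leq N$.
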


 Since $\lim_{b\to\infty} (1-b)^{-N} e_M(b,\dots,b)=\delta_{M,N}$,
\begin{align*}
\lim_{b \to\infty} \frac{f_{N,j,n}(b,\ldots,b)}{(1-b)^N} &=\sum_{u\in \mathbb{Z}} a^{j-u}q^{(N-u)(n-u)+(j-u)(n-N)} \begin{bmatrix}
N\\
u
\end{bmatrix} \times \begin{bmatrix}
0\\
j-u
\end{bmatrix} = q^{(N-j)(n-j)}\begin{bmatrix}
N\\
j
\end{bmatrix},
\end{align*}
and Theorem \ref{thm:multibaileylattice} reduces to the following.

\begin{Theorem}[second new $N$-Bailey lattice]\label{thm:Nbaileylattice2}
Let $(\alpha_n, \beta_n)$ be a bilateral Bailey pair relative to $a$. For all $N\geq 0$, define the pair \smash{$\bigl(\alpha^{(N)}_n, \beta^{(N)}_n\bigr)$} by
\[%\label{eq:multibaileylattice2}
\alpha^{(N)}_n=\bigl(1-aq^{2n-N}\bigr)\bigl(aq^{1-N}\bigr)_N\sum_{j\in \mathbb{Z}}(-1)^j\frac{q^{N(n-j)+j(j-1)/2}}{\bigl(aq^{2n-N-j}\bigr)_{N+1}}\left[{N\atop j}\right]\alpha_{n-j},
\]
and
\smash{$
\beta^{(N)}_n=q^{nN}\beta_n$}.
Then \smash{$\bigl(\alpha^{(N)}_n, \beta^{(N)}_n\bigr)$} is a bilateral Bailey pair relative to $aq^{-N}$.
\end{Theorem}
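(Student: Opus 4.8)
The plan is to obtain Theorem~\ref{thm:Nbaileylattice2} as a degenerate case of the general bilateral $N$-Bailey lattice of Theorem~\ref{thm:multibaileylattice}, by setting all the parameters equal, $b_1=\dots=b_N=b$, and then letting $b\to\infty$. This is the exact mirror of how Theorem~\ref{thm:Nbaileylattice1} arose from the opposite degeneration $b_i\to0$, and it parallels the fact that Lemma~\ref{lem:key2} is the $b\to\infty$ specialisation of the general Lemma~\ref{lem:baileylattices}. The justification for passing to the limit is that being a bilateral Bailey pair relative to $aq^{-N}$ is preserved under such limits: for generic $b$ the pair produced by Theorem~\ref{thm:multibaileylattice} satisfies the defining relation~\eqref{bbp} (with $a$ replaced by $aq^{-N}$), and since the sum defining $\alpha^{(N)}_n$ is finite, the limiting sequences continue to satisfy~\eqref{bbp}. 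It therefore suffices to identify the two limits.

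For the $\beta$-side this is immediate: specialising~\eqref{eq:multibaileylatticebeta} to $b_i=b$ gives $\beta^{(N)}_n=\bigl((1-bq^n)/(1-b)\bigr)^N\beta_n$, and since $(1-bq^n)/(1-b)\to q^n$ as $b\to\infty$, we recover $\beta^{(N)}_n\to q^{nN}\beta_n$, exactly the claimed $\beta$-sequence.

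For the $\alpha$-side, specialising~\eqref{eq:multibaileylatticealpha} leaves the prefactor $\bigl(1-aq^{2n-N}\bigr)\bigl(aq^{1-N}\bigr)_N$ multiplied by the sum over $j$ of $(-1)^jq^{jn-j(j+1)/2}\bigl(aq^{2n-N-j}\bigr)_{N+1}^{-1}\,(1-b)^{-N}f_{N,j,n}(b,\dots,b)\,\alpha_{n-j}$. Here I would invoke the limit displayed just before the theorem, $\lim_{b\to\infty}(1-b)^{-N}f_{N,j,n}(b,\dots,b)=q^{(N-j)(n-j)}\left[{N\atop j}\right]$, which itself follows from $\lim_{b\to\infty}(1-b)^{-N}e_M(b,\dots,b)=\delta_{M,N}$. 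The only genuinely delicate point is then the elementary power-of-$q$ bookkeeping: one must check that the two exponents combine as $jn-\frac{j(j+1)}{2}+(N-j)(n-j)=N(n-j)+\frac{j(j-1)}{2}$, which follows by expanding $(N-j)(n-j)=Nn-Nj-jn+j^2$ and using $j^2-\frac{j(j+1)}{2}=\frac{j(j-1)}{2}$. This yields precisely the exponent in the statement. There is no real obstacle beyond this, since the sum over $j$ is finite (the $q$-binomial $\left[{N\atop j}\right]$ vanishes for $j<0$ and $j>N$), so the limit passes through term by term. Finally, the value $N=0$, which lies outside the range of Theorem~\ref{thm:multibaileylattice}, is the trivial identity: only the $j=0$ term survives and one verifies directly that $\alpha^{(0)}_n=\alpha_n$ and $\beta^{(0)}_n=\beta_n$ relative to $a$.
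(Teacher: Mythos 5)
Your proposal is correct and takes essentially the same route as the paper: the paper obtains Theorem~\ref{thm:Nbaileylattice2} precisely by setting $b_1=\dots=b_N=b$ in Theorem~\ref{thm:multibaileylattice} and letting $b\to\infty$, computing $\lim_{b\to\infty}(1-b)^{-N}f_{N,j,n}(b,\dots,b)=q^{(N-j)(n-j)}\left[{N\atop j}\right]$ from $\lim_{b\to\infty}(1-b)^{-N}e_M(b,\dots,b)=\delta_{M,N}$, exactly as you do. Your exponent check $jn-\frac{j(j+1)}{2}+(N-j)(n-j)=N(n-j)+\frac{j(j-1)}{2}$ is right, and your direct verification of the $N=0$ case (which lies outside the stated range of Theorem~\ref{thm:multibaileylattice}) matches the base case of the paper's induction.
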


We give two new theorems similar to Warnaar's $N$-Bailey lattices, but coming from Theorem~\ref{thm:Nbaileylattice2} instead of Theorem \ref{thm:Nbaileylattice1}.

Applying first Theorem~\ref{thm:Nbaileylattice2} to a Bailey pair relative to $a$, and then Theorem~\ref{thm:bilatbaileylemma} with $a$ replaced by $aq^{-N}$ to the resulting bilateral Bailey pair gives the following result.

\begin{Theorem}\label{thm:analogw1}
Let $(\alpha_n, \beta_n)$ be a bilateral Bailey pair relative to $a$, and $N\geq 0$ be a fixed integer. Then $(\alpha'_n, \beta'_n)$ is a bilateral Bailey pair relative to $aq^{-N}$, where
\begin{equation*}
\alpha'_n=\frac{(\rho,\sigma)_n\bigl(aq^{1-N}/\rho\sigma\bigr)^n}{
\bigl(aq^{1-N}/\rho,aq^{1-N}/\sigma\bigr)_n}\bigl(1-aq^{2n-N}\bigr)\bigl(aq^{1-N}\bigr)_N\sum_{j=0}^N(-1)^j\frac{q^{N(n-j)+j(j-1)/2}}{\bigl(aq^{2n-N-j}\bigr)_{N+1}}\left[{N\atop j}\right]\alpha_{n-j},
\end{equation*}
and
\begin{equation*}
\beta'_n=\sum_{j=0}^n\frac{(\rho,\sigma)_j\bigl(aq^{1-N}/\rho\sigma\bigr)_{n-j}\bigl(aq^{1-N}/\rho\sigma\bigr)^j}{(q)_{n-j}\bigl(aq^{1-N}/\rho,aq^{1-N}/\sigma\bigr)_n} q^{jN}\beta_j.
\end{equation*}
\end{Theorem}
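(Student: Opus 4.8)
The plan is to establish Theorem~\ref{thm:analogw1} by the same two-step composition that produced Theorem~\ref{thm:w1}, but replacing the role of the first new $N$-Bailey lattice by the second one. Concretely, I would first apply Theorem~\ref{thm:Nbaileylattice2} to the given bilateral Bailey pair $(\alpha_n,\beta_n)$ relative to $a$. This yields a bilateral Bailey pair \smash{$\bigl(\alpha^{(N)}_n,\beta^{(N)}_n\bigr)$} relative to $aq^{-N}$, where
\[
\alpha^{(N)}_n=\bigl(1-aq^{2n-N}\bigr)\bigl(aq^{1-N}\bigr)_N\sum_{j\in\mathbb{Z}}(-1)^j\frac{q^{N(n-j)+j(j-1)/2}}{\bigl(aq^{2n-N-j}\bigr)_{N+1}}\left[{N\atop j}\right]\alpha_{n-j}
\]
and $\beta^{(N)}_n=q^{nN}\beta_n$.

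The second step is to feed this pair into the bilateral Bailey lemma (Theorem~\ref{thm:bilatbaileylemma}), with the base parameter $a$ of that theorem replaced throughout by $aq^{-N}$. Under this substitution $aq\mapsto aq^{1-N}$, so that the quantities $aq/\rho\sigma$, $aq/\rho$ and $aq/\sigma$ appearing in Theorem~\ref{thm:bilatbaileylemma} become $aq^{1-N}/\rho\sigma$, $aq^{1-N}/\rho$ and $aq^{1-N}/\sigma$. The lemma then returns a bilateral Bailey pair $(\alpha'_n,\beta'_n)$, again relative to $aq^{-N}$, with
\[
\alpha'_n=\frac{(\rho,\sigma)_n\bigl(aq^{1-N}/\rho\sigma\bigr)^n}{\bigl(aq^{1-N}/\rho,aq^{1-N}/\sigma\bigr)_n}\,\alpha^{(N)}_n
\]
and
\[
\beta'_n=\sum_{j\leq n}\frac{(\rho,\sigma)_j\bigl(aq^{1-N}/\rho\sigma\bigr)_{n-j}\bigl(aq^{1-N}/\rho\sigma\bigr)^j}{(q)_{n-j}\bigl(aq^{1-N}/\rho,aq^{1-N}/\sigma\bigr)_n}\,\beta^{(N)}_j.
\]

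Inserting the explicit formula for $\alpha^{(N)}_n$ into the expression for $\alpha'_n$ reproduces the claimed $\alpha'_n$ verbatim, the sum over $j\in\mathbb{Z}$ being supported on $0\leq j\leq N$ because $\bigl[{N\atop j}\bigr]=0$ otherwise. Similarly, substituting $\beta^{(N)}_j=q^{jN}\beta_j$ gives the claimed $\beta'_n$. There is no genuine analytic difficulty here: the entire argument is bookkeeping for the uniform shift $a\mapsto aq^{-N}$ through the Pochhammer symbols. The only real point of vigilance — and the nearest thing to an obstacle — is to verify that the absolute-convergence hypotheses on $\alpha_n$ and $\beta_n$ propagate through Theorem~\ref{thm:Nbaileylattice2}, so that the bilateral Bailey lemma may be legitimately applied to \smash{$\bigl(\alpha^{(N)}_n,\beta^{(N)}_n\bigr)$}, and to confirm that the substitution $a\mapsto aq^{-N}$ has been carried out consistently in every factor.
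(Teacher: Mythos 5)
Your proposal is correct and is precisely the paper's own proof: the authors obtain Theorem~\ref{thm:analogw1} by applying Theorem~\ref{thm:Nbaileylattice2} to $(\alpha_n,\beta_n)$ and then Theorem~\ref{thm:bilatbaileylemma} with $a$ replaced by $aq^{-N}$, exactly as you do. Your bilateral sum $\sum_{j\leq n}$ for $\beta'_n$ is in fact the faithful output of the bilateral Bailey lemma (the statement's $\sum_{j=0}^{n}$ matches it only when $\beta_j=0$ for $j<0$), and your remark about tracking the convergence hypotheses is the right point of vigilance.
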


Note that Theorem~\ref{thm:newbilatbaileylattice} corresponds to the case $N=1$ of Theorem \ref{thm:analogw1}.

On the other hand, applying first Theorem~\ref{thm:bilatbaileylemma} to a bilateral Bailey pair relative to $a$, and then Theorem~\ref{thm:Nbaileylattice2} to the resulting bilateral Bailey pair, we derive the following result.

\begin{Theorem}\label{thm:analogw2}
Let $(\alpha_n, \beta_n)$ be a bilateral Bailey pair relative to $a$, and $N\geq 0$ be a fixed integer. Then $(\alpha'_n, \beta'_n)$ is a bilateral Bailey pair relative to $aq^{-N}$, where
\begin{equation*}
\alpha'_n=\bigl(1-aq^{2n-N}\bigr)\bigl(aq^{1-N}\bigr)_N\sum_{j=0}^N(-1)^j\frac{q^{N(n-j)+j(j-1)/2}}{\bigl(aq^{2n-N-j}\bigr)_{N+1}}\left[{N\atop j}\right]\frac{(\rho,\sigma)_{n-j}(aq/\rho\sigma)^{n-j}}{
(aq/\rho,aq/\sigma)_{n-j}}\alpha_{n-j},
\end{equation*}
and
\begin{equation*}
\beta'_n=q^{nN}\sum_{j=0}^n\frac{(\rho,\sigma)_j(aq/\rho\sigma)_{n-j}(aq/\rho\sigma)^j}{(q)_{n-j}(aq/\rho,aq/\sigma)_n} \beta_j.
\end{equation*}
\end{Theorem}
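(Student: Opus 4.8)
The plan is to follow the two-step recipe announced just before the statement: the pair $(\alpha'_n,\beta'_n)$ should arise by first applying the bilateral Bailey lemma (Theorem~\ref{thm:bilatbaileylemma}) to $(\alpha_n,\beta_n)$, and then applying the second new $N$-Bailey lattice (Theorem~\ref{thm:Nbaileylattice2}) to the resulting pair. Since both ingredients are already established, the proof reduces to checking that the composition produces exactly the displayed formulas, keeping track of one index shift.

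First I would apply Theorem~\ref{thm:bilatbaileylemma} to the bilateral Bailey pair $(\alpha_n,\beta_n)$ relative to $a$. This yields a bilateral Bailey pair $(\tilde{\alpha}_n,\tilde{\beta}_n)$, still relative to $a$, with
\begin{equation*}
\tilde{\alpha}_n=\frac{(\rho,\sigma)_n(aq/\rho\sigma)^n}{(aq/\rho,aq/\sigma)_n}\alpha_n \qquad\text{and}\qquad \tilde{\beta}_n=\sum_{j\leq n}\frac{(\rho,\sigma)_j(aq/\rho\sigma)_{n-j}(aq/\rho\sigma)^j}{(q)_{n-j}(aq/\rho,aq/\sigma)_n}\beta_j.
\end{equation*}

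Next I would feed $(\tilde{\alpha}_n,\tilde{\beta}_n)$ into Theorem~\ref{thm:Nbaileylattice2}, with the same $a$ and the same $N$. By that theorem the resulting pair is a bilateral Bailey pair relative to $aq^{-N}$, and its $\beta$-component is $q^{nN}\tilde{\beta}_n$, which is precisely the claimed $\beta'_n$. Its $\alpha$-component is
\begin{equation*}
\bigl(1-aq^{2n-N}\bigr)\bigl(aq^{1-N}\bigr)_N\sum_{j\in\Z}(-1)^j\frac{q^{N(n-j)+j(j-1)/2}}{\bigl(aq^{2n-N-j}\bigr)_{N+1}}\left[{N\atop j}\right]\tilde{\alpha}_{n-j},
\end{equation*}
and substituting the explicit value $\tilde{\alpha}_{n-j}=\frac{(\rho,\sigma)_{n-j}(aq/\rho\sigma)^{n-j}}{(aq/\rho,aq/\sigma)_{n-j}}\alpha_{n-j}$ gives exactly the stated $\alpha'_n$. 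The sum over $j\in\Z$ collapses to $0\le j\le N$ because $\bigl[{N\atop j}\bigr]=0$ outside that range, matching the finite range written in the statement.

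There is essentially no genuine obstacle here: the only points requiring care are the index shift $n\mapsto n-j$ when inserting $\tilde{\alpha}$ into the lattice formula, and the bookkeeping that the convergence conditions needed to make the bilateral sums in both Theorem~\ref{thm:bilatbaileylemma} and Theorem~\ref{thm:Nbaileylattice2} absolutely convergent are inherited by the composition. I would also note that this order of composition is the reverse of the one used for Theorem~\ref{thm:analogw1} (where the lattice is applied first and the Bailey lemma second), which is precisely why here the Bailey-lemma factor $(\rho,\sigma)_{n-j}(aq/\rho\sigma)^{n-j}/(aq/\rho,aq/\sigma)_{n-j}$ appears inside the $j$-sum attached to $\alpha_{n-j}$, rather than as a global prefactor as in Theorem~\ref{thm:analogw1}.
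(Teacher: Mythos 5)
Your proposal is correct and is exactly the paper's own argument: the paper derives Theorem~\ref{thm:analogw2} in a single sentence by applying Theorem~\ref{thm:bilatbaileylemma} first and then Theorem~\ref{thm:Nbaileylattice2} to the resulting pair, precisely the composition you carry out, with the $q$-binomial coefficient collapsing the $j$-sum to $0\leq j\leq N$ as you note. Your attention to the index shift $n\mapsto n-j$ and to the reversed order of composition relative to Theorem~\ref{thm:analogw1} matches the paper's reasoning exactly.
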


\subsection[Proof of Theorem~\ref{thm:multibaileylattice}]{Proof of Theorem~\ref{thm:multibaileylattice}}

Now we can turn to the proof of Theorem~\ref{thm:multibaileylattice}.
Recall the classical $q$-analogues of Pascal's triangle
\begin{equation}\label{eq:pascal1}
\left[{N+1\atop j}\right] = q^j\left[{N\atop j}\right]+\left[{N\atop j-1}\right],
\end{equation}
and
\begin{equation} \label{eq:pascal2}
\left[{N+1\atop j}\right]=\left[{N\atop j}\right]+ q^{N+1-j}\left[{N\atop j-1}\right],
\end{equation}
for all integers $N$, $j$ with $N\geq 0$.

\subsubsection[Recurrence relation for f\_\{N,j,n\}(b\_1,...,b\_N)]{Recurrence relation for $\boldsymbol{f_{N,j,n}(b_1,\ldots,b_N)}$}
We first prove the following recurrence relation, which plays a central role in our proof.
\begin{Proposition}[Recurrence relation]\label{prop:rec} For all $0\leq j\leq N+1$,
\begin{gather*}
\bigl(1-aq^{2n-1-N}\bigr)f_{N+1,j,n}(b_1,\ldots,b_{N+1})\\
\qquad =(1-b_{N+1}q^n)\bigl(1-aq^{2n-1-N-j}\bigr)f_{N,j,n}(b_1,\ldots,b_N)\\
\phantom{\qquad =}{}+\bigl(aq^{n-1-N}-b_{N+1}\bigr)\bigl(1-aq^{2n-j}\bigr)f_{N,j-1,n-1}(b_1,\ldots,b_N).
\end{gather*}
\end{Proposition}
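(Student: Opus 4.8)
The plan is to prove Proposition~\ref{prop:rec} directly from the definition~\eqref{eq:pascalab} of $f_{N,j,n}$, by expanding both sides as double sums over $M$ and $u$ and matching coefficients. The natural strategy is to isolate the contribution of the new variable $b_{N+1}$ on the left-hand side. Since $e_M(b_1,\dots,b_{N+1}) = e_M(b_1,\dots,b_N) + b_{N+1}\, e_{M-1}(b_1,\dots,b_N)$, the elementary symmetric polynomial in $N+1$ variables splits into a part not involving $b_{N+1}$ and a part carrying one factor of $b_{N+1}$. I would substitute this into the definition of $f_{N+1,j,n}(b_1,\dots,b_{N+1})$, so that the left-hand side becomes a sum of two pieces, one proportional to $e_M(b_1,\dots,b_N)$ and one proportional to $b_{N+1}\,e_{M-1}(b_1,\dots,b_N)$. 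The right-hand side of the proposition is manifestly linear in $b_{N+1}$ as well, so the proof reduces to matching the $b_{N+1}$-free terms and the $b_{N+1}$-linear terms separately.

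The key algebraic input will be the two $q$-Pascal relations~\eqref{eq:pascal1} and~\eqref{eq:pascal2}, which govern how the $q$-binomial coefficients $\bigl[{M\atop j-u}\bigr]$ and $\bigl[{N-M\atop u}\bigr]$ behave when $N$ (hence $N-M$ or $M$) is incremented. Concretely, in $f_{N+1,j,n}$ the index ranges over $q$-binomials with upper entries $M$ and $(N+1)-M$; after the $e_M$ split I expect to reindex so that the upper entries become $M$ and $N-M$, introducing via~\eqref{eq:pascal1} and~\eqref{eq:pascal2} precisely the combinatorial factors $q^{\cdots}$ and the shifts $M\mapsto M-1$, $u\mapsto u-1$, $j\mapsto j-1$, $n\mapsto n-1$ that produce $f_{N,j,n}$ and $f_{N,j-1,n-1}$ on the right. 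The factors $\bigl(1-aq^{2n-1-N}\bigr)$, $\bigl(1-aq^{2n-1-N-j}\bigr)$ and $\bigl(1-aq^{2n-j}\bigr)$, together with $(1-b_{N+1}q^n)$ and $\bigl(aq^{n-1-N}-b_{N+1}\bigr)$, should emerge after collecting the $a^u$ and $q$-power prefactors and comparing exponents of the form $(M-j+u)(n-j+u)+u(n-N)$; the identity $\binom{n-j}{2}$-type quadratic exponents must be tracked carefully through each shift.

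The main obstacle I anticipate is bookkeeping of the quadratic and bilinear exponents of $q$ in~\eqref{eq:pascalab}: under the simultaneous shifts in $M$, $u$, $j$, and $n$ needed to recognize $f_{N,j-1,n-1}$, the exponent $(M-j+u)(n-j+u)+u(n-N)$ does not transform in an obviously invariant way, and reconciling it with the extra powers of $q$ thrown off by the $q$-Pascal rules and by the replacement $n\mapsto n-1$ (which also alters the $a^u q^{\dots}$ weight) is where the computation becomes delicate. I would organize this by fixing the monomial $e_M(b_1,\dots,b_N)$ (respectively $b_{N+1}e_{M-1}(b_1,\dots,b_N)$) and verifying that, for each fixed $M$, the inner sum over $u$ of the left-hand side matches the corresponding inner sum on the right after applying the relevant Pascal identity; establishing the $b_{N+1}$-free part should use one of~\eqref{eq:pascal1}--\eqref{eq:pascal2} and the $b_{N+1}$-linear part the other. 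Once the two Pascal identities are applied in the correct slots and the exponent arithmetic is checked, the two sides coincide term by term, which proves the recurrence.
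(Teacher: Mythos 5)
Your plan is essentially the paper's own proof: the authors likewise split $e_M(b_1,\dots,b_{N+1})=e_M(b_1,\dots,b_N)+b_{N+1}\,e_{M-1}(b_1,\dots,b_N)$ (after using homogeneity to work with $e_M(-b_1,\dots,-b_N)$), absorb the prefactors $\bigl(1-aq^{2n-1-N}\bigr)$, $\bigl(1-aq^{2n-1-N-j}\bigr)$, $\bigl(1-aq^{2n-j}\bigr)$ by shifting $u$ so as to compare coefficients of $b_{N+1}^0a^ue_M$ and $b_{N+1}^1a^ue_M$ separately, exactly as you propose. The one refinement worth noting is that the coefficient matching does not reduce to a single application of~\eqref{eq:pascal1} or~\eqref{eq:pascal2} per part, but to the multi-term two-$q$-binomial identities of Lemmas~\ref{lem:tech1} and~\ref{lem:tech2} (which degenerate to the Pascal rules only at $M=0$, $u=j$, respectively $M=1$, $u=j$); these are, however, verified by precisely the iterated Pascal-plus-exponent bookkeeping you anticipate, so your approach goes through.
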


We start by giving two technical lemmas which are extensions of \eqref{eq:pascal1} and \eqref{eq:pascal2}. Their proofs are straightforward verifications and are therefore omitted here.

\begin{Lemma}
\label{lem:tech1}
For all $j,u \in \Z$ and $0\leq M\leq N$,
\begin{gather*}
\begin{bmatrix}
M\\
j-u
\end{bmatrix} \begin{bmatrix}
N+1-M\\
u
\end{bmatrix}-q^{2j-2u-M+1}\begin{bmatrix}
M\\
j-u+1
\end{bmatrix} \begin{bmatrix}
N+1-M\\
u-1
\end{bmatrix}\\
\qquad=q^{u}\begin{bmatrix}
M\\
j-u
\end{bmatrix} \begin{bmatrix}
N-M\\
u
\end{bmatrix}
+q^{j-u-M}\begin{bmatrix}
N-M\\
u-1
\end{bmatrix}\left(\begin{bmatrix}
M\\
j-u
\end{bmatrix}-\begin{bmatrix}
M\\
j-u+1
\end{bmatrix}\right)\\
\phantom{\qquad=}{} -q^{2j-3u-2M+3+N}\begin{bmatrix}
M\\
j-u+1
\end{bmatrix} \begin{bmatrix}
N-M\\
u-2
\end{bmatrix} .
\end{gather*}
\end{Lemma}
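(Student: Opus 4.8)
The plan is to rewrite the left-hand side in terms of $q$-binomial coefficients with upper index $N-M$, using the two $q$-Pascal recurrences \eqref{eq:pascal1} and \eqref{eq:pascal2}. The key decision is which recurrence to apply to which factor: I would expand $\bigl[{N+1-M\atop u}\bigr]$ through \eqref{eq:pascal1} and $\bigl[{N+1-M\atop u-1}\bigr]$ through \eqref{eq:pascal2}, namely
\[
\begin{bmatrix} N+1-M \\ u \end{bmatrix} = q^u \begin{bmatrix} N-M \\ u \end{bmatrix} + \begin{bmatrix} N-M \\ u-1 \end{bmatrix}, \qquad \begin{bmatrix} N+1-M \\ u-1 \end{bmatrix} = \begin{bmatrix} N-M \\ u-1 \end{bmatrix} + q^{N-M+2-u} \begin{bmatrix} N-M \\ u-2 \end{bmatrix}.
\]

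Substituting these into the left-hand side and collecting by the factor $\bigl[{N-M\atop\cdot}\bigr]$ that appears, I expect two of the four resulting products to match the right-hand side verbatim: the coefficient of $\bigl[{N-M\atop u}\bigr]$ is $q^u\bigl[{M\atop j-u}\bigr]$, which is exactly the first term on the right, and the coefficient of $\bigl[{N-M\atop u-2}\bigr]$ carries exponent $(2j-2u-M+1)+(N-M+2-u)=2j-3u-2M+3+N$, matching the last term on the right. This asymmetric pairing of the two recurrences is precisely what produces these clean coincidences; applying the same recurrence to both factors would not.

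What remains is to compare the coefficients of $\bigl[{N-M\atop u-1}\bigr]$. After factoring that symbol out, the claim reduces to the single-variable identity
\[
\begin{bmatrix} M \\ k \end{bmatrix} - q^{2k-M+1} \begin{bmatrix} M \\ k+1 \end{bmatrix} = q^{k-M}\left( \begin{bmatrix} M \\ k \end{bmatrix} - \begin{bmatrix} M \\ k+1 \end{bmatrix} \right),
\]
where $k=j-u$, involving only the $M$-binomials. This follows at once from the ratio $\bigl[{M\atop k+1}\bigr]=\bigl[{M\atop k}\bigr](1-q^{M-k})/(1-q^{k+1})$ of consecutive $q$-binomial coefficients: substituting it on both sides and cancelling the common factor $1-q^{k+1}$, each side collapses to $\bigl[{M\atop k}\bigr]\bigl(1-q^{k-M}\bigr)$.

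I do not expect a genuine obstacle, in line with the paper's remark that this is a straightforward verification; the only delicate point is the bookkeeping of the $q$-powers in the second and third terms, for which the specific asymmetric choice of Pascal identities above is exactly what keeps the computation short.
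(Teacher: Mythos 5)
Your reduction is correct, and in fact the paper offers nothing to compare against: it explicitly omits the proofs of Lemmas~\ref{lem:tech1} and~\ref{lem:tech2} as ``straightforward verifications,'' so your argument simply supplies the omitted details, and it does so efficiently. The asymmetric choice — \eqref{eq:pascal1} on $\bigl[{N+1-M\atop u}\bigr]$ and \eqref{eq:pascal2} on $\bigl[{N+1-M\atop u-1}\bigr]$ — is the right one: the coefficients of $\bigl[{N-M\atop u}\bigr]$ and $\bigl[{N-M\atop u-2}\bigr]$ then match the first and third terms of the right-hand side exactly (your exponent bookkeeping $(2j-2u-M+1)+(N-M+2-u)=2j-3u-2M+3+N$ checks out), and everything reduces to the single-variable identity in $k=j-u$ that you state. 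Two small corrections, though. First, your quoted common value is wrong: after substituting $\bigl[{M\atop k+1}\bigr]=\bigl[{M\atop k}\bigr](1-q^{M-k})/(1-q^{k+1})$ and clearing the factor $1-q^{k+1}$, both sides collapse to $\bigl[{M\atop k}\bigr]\bigl(1-q^{2k-M+1}\bigr)$, not $\bigl[{M\atop k}\bigr]\bigl(1-q^{k-M}\bigr)$ — this is only a slip in the final line, as the two sides do agree. Second, since the lemma is asserted for all $j,u\in\Z$, the ratio formula is illegitimate at $k=-1$ (it divides by $1-q^{0}$), so that case needs a one-line direct check ($-q^{-M-1}=-q^{-M-1}$, fine), and the out-of-range cases $k<-1$, $k\geq M+1$ are trivially $0=0$. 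A cleaner way to handle all $k\in\Z$ uniformly is to observe that your identity rearranges to the absorption identity
\begin{equation*}
\bigl(1-q^{M-k}\bigr)\begin{bmatrix} M \\ k \end{bmatrix}=\bigl(1-q^{k+1}\bigr)\begin{bmatrix} M \\ k+1 \end{bmatrix},
\end{equation*}
which holds for every integer $k$ under the paper's vanishing conventions; with that substitution your proof is complete and uniform.
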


Note that when $M=0$ and $u=j$, Lemma~\ref{lem:tech1} reduces to \eqref{eq:pascal1}.

\begin{Lemma}
\label{lem:tech2}
For all $j,u \in \Z$ and $0\leq M\leq N$,
\begin{gather*}
\begin{bmatrix}
M+1\\
j-u
\end{bmatrix} \begin{bmatrix}
N-M\\
u
\end{bmatrix}-q^{2j-2u-M}\begin{bmatrix}
M+1\\
j-u+1
\end{bmatrix} \begin{bmatrix}
N-M\\
u-1
\end{bmatrix}\\
\qquad
=q^j \begin{bmatrix}
M\\
j-u
\end{bmatrix} \begin{bmatrix}
N-M\\
u
\end{bmatrix}
- q^{2j-2u-M}\begin{bmatrix}
M\\
j-u+1
\end{bmatrix} \begin{bmatrix}
N-M\\
u-1
\end{bmatrix}+\begin{bmatrix}
M\\
j-u-1
\end{bmatrix} \begin{bmatrix}
N-M\\
u
\end{bmatrix}\\
\phantom{\qquad=}{}- q^{N+j+1-2u-M} \begin{bmatrix}
M\\
j-u
\end{bmatrix} \begin{bmatrix}
N-M\\
u-1
\end{bmatrix}.
\end{gather*}
\end{Lemma}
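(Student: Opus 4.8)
The plan is to reduce both sides to $q$-binomials with upper entries $M$ and $N-M$ and then to check that a single scalar coefficient vanishes; throughout I write $k:=j-u$ to lighten the notation, so the left-hand side reads $\left[{M+1\atop k}\right]\left[{N-M\atop u}\right]-q^{2k-M}\left[{M+1\atop k+1}\right]\left[{N-M\atop u-1}\right]$. First I would apply the $q$-Pascal relation~\eqref{eq:pascal1} to each binomial carrying the upper entry $M+1$, namely $\left[{M+1\atop k}\right]=q^{k}\left[{M\atop k}\right]+\left[{M\atop k-1}\right]$ and $\left[{M+1\atop k+1}\right]=q^{k+1}\left[{M\atop k+1}\right]+\left[{M\atop k}\right]$. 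After this substitution every term on both sides is a scalar multiple of one of the four products $\left[{M\atop k}\right]\left[{N-M\atop u}\right]$, $\left[{M\atop k-1}\right]\left[{N-M\atop u}\right]$, $\left[{M\atop k+1}\right]\left[{N-M\atop u-1}\right]$, and $\left[{M\atop k}\right]\left[{N-M\atop u-1}\right]$.

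Next I would form the difference of the two sides and collect these four products. The coefficient of $\left[{M\atop k-1}\right]\left[{N-M\atop u}\right]$ cancels outright, and the three remaining coefficients factor neatly as $q^{k}\bigl(1-q^{u}\bigr)$ for $\left[{M\atop k}\right]\left[{N-M\atop u}\right]$, as $q^{2k-M}\bigl(1-q^{k+1}\bigr)$ for $\left[{M\atop k+1}\right]\left[{N-M\atop u-1}\right]$, and as $q^{2k-M}\bigl(q^{N+1-u-k}-1\bigr)$ for $\left[{M\atop k}\right]\left[{N-M\atop u-1}\right]$. The key is then to use the two elementary single-step relations $\bigl(1-q^{u}\bigr)\left[{N-M\atop u}\right]=\bigl(1-q^{N-M-u+1}\bigr)\left[{N-M\atop u-1}\right]$ and $\bigl(1-q^{k+1}\bigr)\left[{M\atop k+1}\right]=\bigl(1-q^{M-k}\bigr)\left[{M\atop k}\right]$, both immediate from the definition of the $q$-binomial coefficient, to rewrite the first two of these terms so that every surviving term becomes a multiple of the single product $\left[{M\atop k}\right]\left[{N-M\atop u-1}\right]$.

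At that point the difference of the two sides equals $\left[{M\atop k}\right]\left[{N-M\atop u-1}\right]$ times the bracket $q^{k}\bigl(1-q^{N-M-u+1}\bigr)+q^{2k-M}\bigl(1-q^{M-k}\bigr)+q^{2k-M}\bigl(q^{N+1-u-k}-1\bigr)$, and expanding this bracket shows that its six monomials cancel in pairs: the two $q^{k}$ terms cancel, the two $q^{2k-M}$ terms cancel, and the two remaining terms cancel because the exponents $k+N-M-u+1$ and $k-M+N+1-u$ coincide. Hence the bracket is $0$ and the identity follows.

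The main obstacle is purely organisational rather than conceptual: one must pick exactly the right reductions in the third step so that all four products collapse onto the common factor $\left[{M\atop k}\right]\left[{N-M\atop u-1}\right]$, and one must track the $q$-exponents with care, since the final cancellation hinges on the coincidence of $k+N-M-u+1$ and $k-M+N+1-u$. No genuinely hard step arises, which is consistent with the authors' description of the verification as straightforward.
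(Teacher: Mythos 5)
Your proof is correct: with $k=j-u$, I checked the substitution of~\eqref{eq:pascal1} into the two binomials with upper entry $M+1$, the resulting four coefficients (the $\bigl[{M\atop k-1}\bigr]\bigl[{N-M\atop u}\bigr]$ one indeed cancels outright), the two absorption identities (which, since $M\geq0$ and $N-M\geq0$, hold for all integer lower entries under the convention that out-of-range $q$-binomials vanish, so no case analysis is needed), and the final bracket $q^{k}\bigl(1-q^{N-M-u+1}\bigr)+q^{2k-M}\bigl(1-q^{M-k}\bigr)+q^{2k-M}\bigl(q^{N+1-u-k}-1\bigr)$, whose six monomials cancel in pairs exactly as you say. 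The paper omits the proof entirely, calling it a straightforward verification, so there is no authorial argument to diverge from; your route via~\eqref{eq:pascal1} plus absorption is precisely the kind of routine check the authors intended, consistent with their remark that the case $M=1$, $u=j$ reduces to a combination of~\eqref{eq:pascal1} and~\eqref{eq:pascal2}.
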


Note that when $M=1$ and $u=j$, Lemma~\ref{lem:tech2} reduces to a combination of \eqref{eq:pascal1} and \eqref{eq:pascal2}.

We can now prove our recurrence relation.
\begin{proof}[Proof of Proposition \ref{prop:rec}]
Using the homogeneity of $e_M$, recall from Theorem~\ref{thm:multibaileylattice} that
\begin{gather*}
f_{N,j,n}(b_1,\ldots,b_N)\\
\qquad= \sum_{M \in \Z} \sum_{u \in \Z} a^{u}q^{(M-j+u)(n-j+u)+u(n-N)} \begin{bmatrix}
M\\
j-u
\end{bmatrix} \begin{bmatrix}
N-M\\
u
\end{bmatrix} e_M(-b_1, \dots , -b_N).
\end{gather*}
Thus
\begin{gather}
\bigl(1-aq^{2n-1-N}\bigr)f_{N+1,j,n}(b_1,\ldots,b_{N+1})\nonumber \\
\qquad= \sum_{M \in \Z} \sum_{u\in \mathbb{Z}} a^{u}q^{(M-j+u)(n-j+u)+u(n-N-1)}\nonumber\\
\phantom{\qquad=}{}\times\bigg( \begin{bmatrix}
M\\
j-u
\end{bmatrix} \begin{bmatrix}
N+1-M\\
u
\end{bmatrix}
-q^{2j-2u-M+1}\begin{bmatrix}
M\\
j-u+1
\end{bmatrix} \begin{bmatrix}
N+1-M\\
u-1
\end{bmatrix} \bigg)\nonumber\\
\phantom{\qquad=}{}\times e_M(-b_1, \dots , -b_{N+1}).\label{eq:lastline}
\end{gather}
Writing
\begin{equation*}e_M(-b_1, \dots , -b_{N+1})=e_M(-b_1, \dots , -b_{N})-b_{N+1}e_{M-1}(-b_1, \dots , -b_{N}),
\end{equation*}
it is enough to show that coefficients of $b_{N+1}^0a^ue_M(-b_1, \dots , -b_{N})$ and $b_{N+1}^1a^ue_M(-b_1, \dots , -b_{N})$ coincide on both sides of Proposition~\ref{prop:rec}.

First, we derive from~\eqref{eq:lastline} that the coefficient of $b_{N+1}^0a^ue_M(-b_1, \dots , -b_{N})$ on the left-hand side of Proposition~\ref{prop:rec} is equal to \smash{$q^{(M-j+u)(n-j+u)+u(n-N-1)}$} times
\begin{equation}\label{eq:coeffb0l}
\begin{bmatrix}
M\\
j-u
\end{bmatrix} \begin{bmatrix}
N+1-M\\
u
\end{bmatrix}-q^{2j-2u-M+1}\begin{bmatrix}
M\\
j-u+1
\end{bmatrix} \begin{bmatrix}
N+1-M\\
u-1
\end{bmatrix}.
\end{equation}

Now the coefficient of $b_{N+1}^0a^ue_M(-b_1, \dots , -b_{N})$ on the right-hand side of Proposition~\ref{prop:rec} is equal to \smash{$q^{(M-j+u)(n-j+u)+u(n-N-1)}$} times
\begin{gather}
q^u \begin{bmatrix}
M\\
j-u
\end{bmatrix} \begin{bmatrix}
N-M\\
u
\end{bmatrix}-q^{j-u-M}\begin{bmatrix}
M\\
j-u+1
\end{bmatrix} \begin{bmatrix}
N-M\\
u-1
\end{bmatrix}+q^{j-u-M}\begin{bmatrix}
M\\
j-u
\end{bmatrix} \begin{bmatrix}
N-M\\
u-1
\end{bmatrix}\nonumber\\
\qquad{}-q^{N+2j-2M-3u+3} \begin{bmatrix}
M\\
j-u+1
\end{bmatrix} \begin{bmatrix}
N-M\\
u-2
\end{bmatrix}.\label{eq:coeffb0r}
\end{gather}
Note finally that~\eqref{eq:coeffb0l} (resp.~\eqref{eq:coeffb0r}) is the left-hand (resp.\ right-hand) side of Lemma~\ref{lem:tech1}, so we are done regarding this first coefficient.

Similarly, the coefficient of $b_{N+1}^1a^ue_M(-b_1, \dots , \!-b_{N})$ in~\eqref{eq:lastline} is \smash{$-q^{(M+1-j+u)(n-j+u)+u(n-N-1)}$} times
\begin{equation}\label{eq:coeffb1l}
\begin{bmatrix}
M+1\\
j-u
\end{bmatrix} \begin{bmatrix}
N-M\\
u
\end{bmatrix}-q^{2j-2u-M}\begin{bmatrix}
M+1\\
j-u+1
\end{bmatrix} \begin{bmatrix}
N-M\\
u-1
\end{bmatrix},
\end{equation}

and the coefficient of $b_{N+1}^1a^ue_M(-b_1, \dots , -b_{N})$ on the right-hand side of Proposition~\ref{prop:rec} is equal to \smash{$-q^{(M+1-j+u)(n-j+u)+u(n-N-1)}$} times
\begin{gather}
q^j \begin{bmatrix}
M\\
j-u
\end{bmatrix} \begin{bmatrix}
N-M\\
u
\end{bmatrix}-q^{2j-2u-M}\begin{bmatrix}
M\\
j-u+1
\end{bmatrix} \begin{bmatrix}
N-M\\
u-1
\end{bmatrix}+\begin{bmatrix}
M\\
j-u-1
\end{bmatrix} \begin{bmatrix}
N-M\\
u
\end{bmatrix}\nonumber\\
\qquad{}-q^{N+j+1-2u-M} \begin{bmatrix}
M\\
j-u
\end{bmatrix} \begin{bmatrix}
N-M\\
u-1
\end{bmatrix}.\label{eq:coeffb1r}
\end{gather}

As~\eqref{eq:coeffb1l} (resp.~\eqref{eq:coeffb1r}) is the left-hand (resp.\ right-hand) side of Lemma~\ref{lem:tech2}, this ends the proof of Proposition~\ref{prop:rec}.
\end{proof}

\subsubsection[Proof of Theorem \ref{thm:multibaileylattice}]{Proof of Theorem \ref{thm:multibaileylattice}}

Now we use Proposition \ref{prop:rec} and Lemma \ref{lem:baileylattices} to prove Theorem \ref{thm:multibaileylattice}.

We proceed by induction on $N$. For $N=0$, by \eqref{eq:multibaileylatticebeta}, \smash{$\beta^{(0)}_n=\beta_n$} and by \eqref{eq:multibaileylatticealpha},
\begin{equation*}
\alpha^{(0)}_n = \bigl(1-aq^{2n}\bigr)\frac{\alpha_{n}}{1-aq^{2n}}=\alpha_n.
\end{equation*}
Thus $\bigl(\alpha^{(0)}_n, \beta^{(0)}_n\bigr)$ is a bilateral Bailey pair relative to $a$.

Now, assume that for some integer $N\geq 0$, \smash{$\bigl(\alpha^{(N)}_n,\beta^{(N)}_n\bigr)$} is a bilateral Bailey pair relative to~$aq^{-N}$ and show that \smash{$\bigl(\alpha^{(N+1)}_n,\beta^{(N+1)}_n\bigr)$} is a bilateral Bailey pair relative to $aq^{-N-1}$. By Lemma~\ref{lem:baileylattices} with $b = b_{N+1}$, $(\alpha'_n,\beta'_n)$ is a bilateral Bailey pair relative to $aq^{-N-1}$, where
\begin{equation*}
\alpha'_n=\frac{1-aq^{-N}}{1-b_{N+1}}\left(\frac{(1-b_{N+1}q^n) \alpha_n^{(N)}}{1-aq^{2n-N}}-\frac{q^{n-1}\bigl(aq^{n-N-1}-b_{N+1}\bigr) \alpha_{n-1}^{(N)}}{1-aq^{2n-N-2}}\right),
\end{equation*}
and
\begin{equation*}\beta'_n=\frac{1-b_{N+1}q^n}{1-b_{N+1}}\beta_n^{(N)}.
\end{equation*}

Now let us show that $(\alpha'_n,\beta'_n) = \bigl(\alpha^{(N+1)}_n,\beta^{(N+1)}_n\bigr)$.
It is clear that $\beta'_n=\beta^{(N+1)}_n$, and for $\alpha'_n$, by~\eqref{eq:multibaileylatticealpha} we have
\begin{align*}
\alpha'_n
={}&\frac{1-aq^{-N}}{1-b_{N+1}}\left(\frac{(1-b_{N+1}q^n) \alpha_n^{(N)}}{1-aq^{2n-N}}-\frac{q^{n-1}\bigl(aq^{n-N-1}-b_{N+1}\bigr) \alpha_{n-1}^{(N)}}{1-aq^{2n-N-2}}\right)\\
={}& \frac{\bigl(aq^{-N}\bigr)_{N+1}}{(1-b_1)\cdots(1-b_{N+1})} \bigg(\sum_{j \in \Z}(-1)^j\frac{q^{jn-j(j+1)/2}(1-b_{N+1}q^n)f_{N,j,n}(b_1,\ldots,b_N)}{\bigl(aq^{2n-N-j}\bigr)_{N+1}} \alpha_{n-j} \\
& +\sum_{j \in \Z}(-1)^{j+1}\frac{q^{(j+1)(n-1)-j(j+1)/2}\bigl(aq^{n-1-N}-b_{N+1}\bigr)f_{N,j,n-1}(b_1,\ldots,b_N)}{\bigl(aq^{2n-2-N-j}\bigr)_{N+1}} \alpha_{n-1-j} \bigg)\\
={}& \frac{\bigl(aq^{-N}\bigr)_{N+1}}{(1-b_1)\cdots(1-b_{N+1})} \sum_{j \in \Z}(-1)^j\frac{q^{jn-j(j+1)/2}}{(aq^{2n-N-j-1})_{N+2}} \big( (1-b_{N+1}q^n)\bigl(1-aq^{2n-N-j-1}\bigr)
\\
& \times f_{N,j,n}(b_1,\ldots,b_N)+ \bigl(aq^{n-1-N}-b_{N+1}\bigr)\bigl(1-aq^{2n-j}\bigr)f_{N,j-1,n-1}(b_1,\ldots,b_N) \big) \alpha_{n-j}.
\end{align*}
Now by Proposition \ref{prop:rec}, this equals
\begin{align*}
\alpha'_n
={}& \frac{\bigl(aq^{-N}\bigr)_{N+1}}{(1-b_1)\cdots(1-b_{N+1})} \sum_{j \in \Z}(-1)^j\frac{q^{jn-j(j+1)/2}}{\bigl(aq^{2n-N-j-1}\bigr)_{N+2}} \bigl(1-aq^{2n-1-N}\bigr)\\
&\times f_{N+1,j,n}(b_1,\ldots,b_{N+1}) \alpha_{n-j}
= \alpha_n^{(N+1)}.
\end{align*}
Thus the pair \smash{$\bigl(\alpha^{(N+1)}_n, \beta^{(N+1)}_n\bigr)$} is indeed a Bailey pair relative to $aq^{-N-1}$.

\section{A new proof of Bressoud's identity}\label{sec:bressoud}

In this last section, we show that the unilateral version of Theorem~\ref{thm:newbilatbaileylattice} can be used to give a simple proof of Bressoud's identity \cite{Br80} which generalises the analytic version of the Rogers--Ramanujan identities (indeed for $k=2$, $r=1$ and $c_1,c_2,b_1\to\infty$, $a=1$ or $a=q$, \eqref{Bressoud2} reduces to the Rogers--Ramanujan identities).

\begin{Theorem}[Bressoud]\label{thm:Bressoud}
For integers $0<r<k$ and parameters $a$, $c_1$, $c_2$, $b_1,\dots, b_{2r-1}$, we have
\begin{gather}
\sum_{s_1\geq\dots\geq s_{k-1}\geq0}(-1)^{s_1}\frac{a^{s_1+\dots+s_{k-1}}q^{s_1^2/2+s_{r+1}^2+\dots+s_{k-1}^2-s_{1}/2+s_{r}}}{b_1^{s_1}(b_2b_{2r-1})^{s_2} \cdots(b_{r}b_{r+1})^{s_{r}}}\frac{(aq/c_1c_2)_{s_{k-1}}}{(q,aq/c_1,aq/c_2)_{s_{k-1}}}\nonumber\\
\qquad\times\frac{(b_1)_{s_1}(b_2,b_{2r-1})_{s_2}\cdots(b_{r},b_{r+1})_{s_{r}}}{(q)_{s_1-s_2}\cdots(q)_{s_{k-2}-s_{k-1}}}
\frac{(a/b_2b_{2r-1})_{s_1-s_2}\cdots(a/b_{r}b_{r+1})_{s_{r-1}-s_{r}}}{(a/b_2,a/b_{2r-1})_{s_1}\cdots(a/b_{r},a/b_{r+1})_{s_{r-1}}}\nonumber\\
\phantom{\qquad\times}{}=\frac{(a/b_1)_\infty}{(a)_\infty}\sum_{j\geq0}\frac{(b_1,\dots,b_{2r-1},c_1,c_2,a)_j(b_1\cdots b_{2r-1}c_1c_2)^{-j}a^{kj}q^{(k-r)j^2+j}}{(a/b_1,\dots,a/b_{2r-1},aq/c_1,aq/c_2,q)_j}\nonumber\\
\phantom{\qquad\times=}{}\times\left(1+\frac{a^{r}q^{j}}{b_1\cdots b_{2r-1}}\frac{\bigl(1-b_1q^j\bigr)\cdots\bigl(1-b_{2r-1} q^j\bigr)}{\bigl(1-aq^j/b_1\bigr)\cdots\bigl(1-aq^j/b_{2r-1}\bigr)}\right).\label{Bressoud2}
\end{gather}
\end{Theorem}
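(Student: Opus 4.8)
The plan is to exhibit \eqref{Bressoud2} as two evaluations of the limit $\lim_{N\to\infty}\beta_N$ for one bilateral Bailey pair relative to $a/q$, constructed from the unit pair \eqref{ubp}. This is the Agarwal--Andrews--Bressoud construction underlying Theorem~\ref{thm:bilatncsqbaileylattice} and Corollary~\ref{coro:bilatbaileylattice2}, pushed to its general form: I would keep \emph{all} the outer parameters finite (rather than a single one) and use the \emph{new} lattice of Theorem~\ref{thm:newbilatbaileylattice} for the one step that changes the base, rather than the classical lattice of Theorem~\ref{thm:baileylattice}. Reading the summand of \eqref{Bressoud2} backwards dictates the exact sequence of $k$ transformations to apply to \eqref{ubp}.

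Concretely, the chain is: (a) a bilateral Bailey-lemma step (Theorem~\ref{thm:bilatbaileylemma}) relative to $a$ with $(\rho,\sigma)=(c_1,c_2)$, applied to $\beta_n=\delta_{n,0}$, which contributes $(aq/c_1c_2)_{s_{k-1}}/\bigl((q)_{s_{k-1}}(aq/c_1,aq/c_2)_{s_{k-1}}\bigr)$; (b) $k-2-r$ further lemma steps relative to $a$ with $\rho,\sigma\to\infty$, producing the pure squares $q^{s_{r+2}^2+\dots+s_{k-1}^2}$; (c) one new-lattice step (Theorem~\ref{thm:newbilatbaileylattice}) with $\rho,\sigma\to\infty$, which passes from base $a$ to base $a/q$ and contributes the remaining square $q^{s_{r+1}^2}$; (d) $r-1$ lemma steps relative to $a/q$ with $(\rho,\sigma)=(b_r,b_{r+1}),\dots,(b_2,b_{2r-1})$; and (e) an outermost lemma step relative to $a/q$ with $\rho=b_1$, $\sigma\to\infty$. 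Because a lemma step relative to $a/q$ carries the factors $a/\rho\sigma$ and $a/\rho,a/\sigma$ (as $(a/q)q=a$), block (d) produces exactly the Pochhammer symbols $(a/b_jb_{2r+1-j})_{s_{j-1}-s_j}$ and $(a/b_j,a/b_{2r+1-j})_{s_{j-1}}$ in the \emph{global} $a$, while step (e) manufactures the lone $b_1$, the sign $(-1)^{s_1}$, and the half-exponent $q^{s_1^2/2-s_1/2}$. Evaluating $\beta_N$ along the chain and letting $N\to\infty$ yields the left side of \eqref{Bressoud2} divided by $(q)_\infty(a/b_1)_\infty$ (the factor $(a/b_1)_\infty$ being the limit of the outermost denominator $(a/b_1,a/\sigma)_N$); evaluating the same $\beta_N$ from the defining relation \eqref{bbp} relative to $a/q$ yields $\sum_j\alpha_j$ divided by $(q)_\infty(a)_\infty$. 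Equating the two and cancelling $(q)_\infty$ reproduces the prefactor $(a/b_1)_\infty/(a)_\infty$.

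The substance of the argument lies on the $\alpha$-side. The successive multipliers $(\rho,\sigma)_n(a/\rho\sigma)^n/(a/\rho,a/\sigma)_n$ accumulate, against the explicit \eqref{ubp}, into the Pochhammer quotient $(b_1,\dots,b_{2r-1},c_1,c_2,a)_j(b_1\cdots b_{2r-1}c_1c_2)^{-j}/(a/b_1,\dots,a/b_{2r-1},aq/c_1,aq/c_2,q)_j$ and the monomial $a^{kj}q^{(k-r)j^2+j}$. Superimposed on this is the two-term difference $q^n\alpha_n/(1-aq^{2n})-q^{n-1}\alpha_{n-1}/(1-aq^{2n-2})$ created by the single new-lattice step; the task is to show that carrying the shifted ($\alpha_{n-1}$) summand through the $r$ finite-$b$ outer steps assembles it into precisely $1+a^rq^j(b_1\cdots b_{2r-1})^{-1}\prod_{i=1}^{2r-1}(1-b_iq^j)/(1-aq^j/b_i)$. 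I expect this consolidation to be the principal obstacle, since it is the all-parameter analogue of the single factor $1+a^{i+1}q^{j(2i+1)}(1-bq^j)/(b-aq^j)$ of Corollary~\ref{coro:bilatbaileylattice2}. It is here that the $q^n$-weighting intrinsic to Theorem~\ref{thm:newbilatbaileylattice} (equivalently Lemma~\ref{lem:key2}) is decisive: it supplies the normalisation $1/(b_1\cdots b_{2r-1})$ that makes the product close up, whereas the unweighted classical lattice of Theorem~\ref{thm:baileylattice} yields a combination that does not reduce to \eqref{Bressoud2} — exactly the failure the authors report.

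Finally, I would combine the accumulated Pochhammer quotient, the monomial $a^{kj}q^{(k-r)j^2+j}$, and the two-term factor into a single sum over $j$, identify it with the right side of \eqref{Bressoud2}, and conclude by equating the two evaluations of $\lim_{N\to\infty}\beta_N$. What remains is routine: reindexing $j$, absorbing the half-integer $q$-powers left by the $\sigma\to\infty$ specialisations, and verifying the absolute convergence required for the bilateral manipulations and the passage $N\to\infty$. As a check, specialising $k=2$, $r=1$, $c_1,c_2,b_1\to\infty$, and $a\in\{1,q\}$ must collapse \eqref{Bressoud2} to the Rogers--Ramanujan identities of Theorem~\ref{th:RR}, as noted in the text preceding the statement.
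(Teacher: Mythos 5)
Your global strategy is the paper's: run a chain of $k$ transformations on the unit Bailey pair \eqref{ubp}, with exactly one base-changing step supplied by Theorem~\ref{thm:newbilatbaileylattice}, evaluate the resulting $\beta_N$ both through the chain and through the defining relation, and let $N\to\infty$ (the paper packages this as Theorem~\ref{thm:ncsqnewbaileylattice}, then \eqref{newlattice}--\eqref{newlattice3}). But your allocation of parameters to steps has a fatal flaw: you give the new-lattice step the parameters $\rho,\sigma\to\infty$ and push \emph{all} the finite pairs $(b_r,b_{r+1}),\dots,(b_2,b_{2r-1})$ into later plain lemma steps at level $a/q$. Theorem~\ref{thm:newbilatbaileylattice} is Lemma~\ref{lem:key2} followed by a lemma step at $a/q$, and a direct check on both $\alpha_n$ and $\beta_n$ shows that the composite ``Lemma~\ref{lem:key2}, then a $\rho,\sigma\to\infty$ lemma step at $a/q$'' coincides with ``a $\rho,\sigma\to\infty$ lemma step at $a$, then Lemma~\ref{lem:key1}'': on the $\beta$-side both give $\beta''_n=\sum_{j}a^jq^{j^2}\beta_j/(q)_{n-j}$, the $q^j$ of Lemma~\ref{lem:key2} being swallowed as $(a/q)^jq^{j^2}\cdot q^j=a^jq^{j^2}$, and on the $\alpha$-side both give $(1-a)\bigl(a^jq^{j^2}\alpha_j/\bigl(1-aq^{2j}\bigr)-a^jq^{j^2-1}\alpha_{j-1}/\bigl(1-aq^{2j-2}\bigr)\bigr)$. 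Hence your chain (a)--(e) is step-for-step identical to the chain built on the \emph{classical} lattice, and it proves \eqref{lattice3}, not \eqref{Bressoud2}: its left side lacks the linear factor $q^{s_r}$, and on the right side the monomial is $a^{kj}q^{(k-r)j^2}$ and the consolidation you postpone as ``the principal obstacle'' actually comes out as $1+\dfrac{a^{r+1}q^{3j}}{b_1\cdots b_{2r-1}}\prod_{i=1}^{2r-1}\dfrac{1-b_iq^j}{1-aq^j/b_i}$ (your (c)-step contributes the extra ratio $aq^{2j}$ between consecutive multipliers), rather than Bressoud's $q^{(k-r)j^2+j}$ and $1+a^rq^j(b_1\cdots b_{2r-1})^{-1}\prod(\cdots)$. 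This is precisely the dead end the paper documents after \eqref{lattice3}, which only yields the special case of \eqref{Bressoud2} with $k=r$ and two of the $b_i$ sent to infinity.

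The repair is to let the new-lattice step itself carry the innermost finite pair: after the $(c_1,c_2)$ lemma step on \eqref{ubp}, perform all $k-1-r$ of the $\rho,\sigma\to\infty$ lemma steps at level $a$, then apply Theorem~\ref{thm:newbilatbaileylattice} with $(\rho,\sigma)=(b_r,b_{r+1})$, then the $r-2$ remaining finite pairs and finally the $(\rho,\sigma)=(b_1,\infty)$ step at level $a/q$. With finite parameters in the lattice step, the $q^n$-weighting of Lemma~\ref{lem:key2} is \emph{not} absorbed: it survives as the linear term $q^{s_r}$ on the left of \eqref{Bressoud2}, supplies the extra $q^j$ in $q^{(k-r)j^2+j}$, and removes the spurious $aq^{2j}$ from the telescoped two-term factor, turning $a^{r+1}q^{3j}$ into $a^rq^j$ --- this is the content of the paper's passage from \eqref{newlattice2} to \eqref{newlattice3}, where the limits taken are $\sigma_1\to\infty$ and $\rho_j,\sigma_j\to\infty$ only for $j\geq i+1$, the lattice index $i$ keeping $(\rho_i,\sigma_i)=(b_r,b_{r+1})$. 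Ironically, you correctly identify the $q^n$-weighting of Lemma~\ref{lem:key2} as the decisive ingredient, but your placement of the lattice step neutralises it; everything else in your outline (the two evaluations of $\lim_{N\to\infty}\beta_N$, the origin of $(a/b_1)_\infty/(a)_\infty$, the convergence caveats, the Rogers--Ramanujan check) is sound.
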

To see why the result above is actually equivalent to Bressoud's formula, see the following subsection.

The open problem of giving a combinatorial proof of Theorem \ref{thm:Bressoud} (when parameters $c_1,c_2\to\infty$ and the other parameters have specific forms), known as Bressoud's conjecture, was solved by Bressoud himself in some cases; then the next big step towards its resolution was made by Kim and Yee~\cite{KY}, and the full problem has recently been settled by Kim \cite{Ki}.

We give our proof of Theorem~\ref{thm:Bressoud} in this section, by showing that it is a consequence of the unilateral version of our new bilateral Bailey lattice in Theorem~\ref{thm:newbilatbaileylattice}. We also show that it does not seem to follow from the classical Bailey lattice of Theorem~\ref{thm:baileylattice}, which seems surprising at first sight.

%%%%%%%%%%%%%%%%%%
\subsection{Bressoud's result}
%%%%%%%%%%%%%%%%%%

In~\cite{Br80}, Bressoud defines $F_{\lambda,k,r}(c_1,c_2;b_1,\dots,b_\lambda;a;q)$ and $G_{\lambda,k,r}(c_1,c_2;b_1,\dots,b_\lambda;a;q)$, which are two functions whose integral parameters satisfy $k\geq r>\lambda/2\geq0$. This is equivalent to $2k-1\geq2r-1\geq\lambda\geq0$. Then Bressoud's main theorem in~\cite{Br80} states on the one hand that for all $k>r>\lambda/2\geq0$, we have
\begin{equation}\label{eq:bressoud}
F_{\lambda,k,r}(c_1,c_2;b_1,\dots,b_\lambda;a;q)=G_{\lambda,k,r}(c_1,c_2;b_1,\dots,b_\lambda;a;q),
\end{equation}
and on the other hand that for all $k\geq r>\lambda/2\geq0$, we have
\begin{equation}\label{eq:bressoudlim}
\lim_{c_1,c_2\to\infty}F_{\lambda,k,r}(c_1,c_2;b_1,\dots,b_\lambda;a;q)=\lim_{c_1,c_2\to\infty}G_{\lambda,k,r}(c_1,c_2;b_1,\dots,b_\lambda;a;q).
\end{equation}
We want to prove that these identities are both special cases of our new Bailey lattice. To do that, first note that it is enough to prove them when $\lambda$ takes its maximal value, that is $\lambda=2r-1$. Indeed, by the definitions of Bressoud's functions, we have
\begin{equation*}\lim_{b_\lambda\to\infty}F_{\lambda,k,r}(c_1,c_2;b_1,\dots,b_\lambda;a;q)=F_{\lambda-1,k,r}(c_1,c_2;b_1,\dots,b_{\lambda-1};a;q),
\end{equation*}
and
\begin{equation*}\lim_{b_\lambda\to\infty}G_{\lambda,k,r}(c_1,c_2;b_1,\dots,b_\lambda;a;q)=G_{\lambda-1,k,r}(c_1,c_2;b_1,\dots,b_{\lambda-1};a;q).
\end{equation*}

Now for $\lambda=2r-1$, one can define the first of these functions by
\begin{gather}
\frac{F_{2r-1,k,r}(c_1,c_2;b_1,\dots,b_{2r-1};a;q)}{(a/b_2,\dots,a/b_{2r-1})_\infty}\nonumber\\
\qquad=\frac{(a/b_1)_\infty}{(a)_\infty}\sum_{j\geq0}\frac{(b_1,\dots,b_{2r-1},c_1,c_2,a)_j(b_1\cdots b_{2r-1}c_1c_2)^{-j}a^{kj}q^{(k-r)j^2+j}}{(a/b_1,\dots,a/b_{2r-1},aq/c_1,aq/c_2,q)_j}\nonumber\\
\phantom{\qquad=}{}\times\left(1+\frac{a^{r}q^{j}}{b_1\cdots b_{2r-1}}\frac{\bigl(1-b_1q^j\bigr)\cdots\bigl(1-b_{2r-1} q^j\bigr)}{\bigl(1-aq^j/b_1\bigr)\cdots\bigl(1-aq^j/b_{2r-1}\bigr)}\right).\label{FBressoud}
\end{gather}

The second function of Bressoud can be defined for $\lambda=2r-1$ as
\begin{gather*}
G_{2r-1,k,r}(c_1,c_2;b_1,\dots,b_{2r-1};a;q)\\
\qquad=\sum_{s_1\geq\dots\geq s_{k-1}\geq0}\bigg(\frac{a^{s_1+\dots+s_{k-1}}q^{s_1^2+\dots+s_{k-1}^2-s_{1}-\dots-s_{r-1}}(aq/c_1c_2)_{s_{k-1}}}{(q,aq/c_1,aq/c_2)_{s_{k-1}}(q)_{s_1-s_2} \cdots(q)_{s_{k-2}-s_{k-1}}}\\
\phantom{\qquad=}{}\times\bigl(q^{1-s_1}/b_1\bigr)_{s_1}(a/b_2b_{2r-1})_{s_1-s_2}\cdots(a/b_rb_{r+1})_{s_{r-1}-s_r}\\
\phantom{\qquad=}{}\times\bigl(q^{1-s_2}/b_2,q^{1-s_2}/b_{2r-1}\bigr)_{s_2}\cdots\bigl(q^{1-s_r}/b_r,q^{1-s_r}/b_{r+1}\bigr)_{s_r}\\
\phantom{\qquad=}{}\times\bigl(aq^{s_1}/b_2,aq^{s_1}/b_{2r-1},\dots,aq^{s_{r-1}}/b_{r},aq^{s_{r-1}}/b_{r+1}\bigr)_\infty\bigg).
\end{gather*}

Using
\begin{gather*}
\bigl(q^{1-n}/b\bigr)_n=(-1)^nb^{-n}q^{-n(n-1)/2}(b)_n\qquad\mbox{and}\qquad(aq^n/b)_\infty=\frac{(a/b)_\infty}{(a/b)_n},
\end{gather*}
this gives
\begin{gather}
\frac{G_{2r-1,k,r}(c_1,c_2;b_1,\dots,b_{2r-1};a;q)}{(a/b_2,\dots,a/b_{2r-1})_\infty}\nonumber\\
\qquad=\sum_{s_1\geq\dots\geq s_{k-1}\geq0}\bigg((-1)^{s_1}\frac{a^{s_1+\dots+s_{k-1}}q^{s_1^2/2+s_{r+1}^2+\dots+s_{k-1}^2-s_{1}/2+s_{r}}}{b_1^{s_1}(b_2b_{2r-1})^{s_2}
\cdots(b_rb_{r+1})^{s_r}}\frac{(aq/c_1c_2)_{s_{k-1}}}{(q,aq/c_1,aq/c_2)_{s_{k-1}}}\nonumber\\
\phantom{\qquad=}{}\times \frac{(b_1)_{s_1}(b_2,b_{2r-1})_{s_2}\cdots(b_r,b_{r+1})_{s_r}}{(q)_{s_1-s_2}\cdots(q)_{s_{k-2}-s_{k-1}}}
\frac{(a/b_2b_{2r-1})_{s_1-s_2}\cdots(a/b_rb_{r+1})_{s_{r-1}-s_r}}{(a/b_2,a/b_{2r-1})_{s_1}\cdots(a/b_{r},a/b_{r+1})_{s_{r-1}}}\bigg).\label{GBressoud}
\end{gather}

Then identity~\eqref{eq:bressoud} of Bressoud translates for $\lambda=2r-1$ as
\begin{equation*}
F_{2r-1,k,r}(c_1,c_2;b_1,\dots,b_{2r-1};a;q)=G_{2r-1,k,r}(c_1,c_2;b_1,\dots,b_{2r-1};a;q),
\end{equation*}
with $0<r<k$, which from~\eqref{FBressoud} and~\eqref{GBressoud} is equivalent to \eqref{Bressoud2}.
Finally, Bressoud's second result~\eqref{eq:bressoudlim}
 asserts that~\eqref{Bressoud2} is still valid for $r=k$ when $c_1,c_2\to\infty$.

\subsection{A proof through our new Bailey lattice}

Replacing the use of Theorem~\ref{thm:baileylattice} by the unilateral version of our new Bailey lattice given in Theorem~\ref{thm:newbilatbaileylattice} gives the following sequence: iterate $r-i$ times Theorem~\ref{thm:baileylemma}, then use the unilateral version of Theorem~\ref{thm:newbilatbaileylattice}, and finally $i-1$ times Theorem~\ref{thm:baileylemma} with $a$ replaced by $a/q$. This yields a final Bailey pair relative to $a/q$ to which we apply~\eqref{bp} with $a$ replaced by $a/q$. This is summarised in the following result, to be compared with~\cite[Theorem 3.1]{AAB} (equivalently the unilateral version of Theorem~\ref{thm:bilatncsqbaileylattice}).

\begin{Theorem}\label{thm:ncsqnewbaileylattice}
If $(\alpha_n, \beta_n)$ is a Bailey pair relative to $a$, then for all integers $0\leq i\leq r$ and~${n\geq0}$, we have
\begin{gather}
\sum_{s_1\geq\dots\geq s_{r}\geq0}\frac{a^{s_1+\dots+s_r}q^{s_i+\dots+s_{r}}\beta_{s_r}}{(\rho_1\sigma_1)^{s_1}
\cdots(\rho_r\sigma_r)^{s_r}}\frac{(\rho_1,\sigma_1)_{s_1}\cdots(\rho_r,\sigma_r)_{s_r}}{(q)_{n-s_1}(q)_{s_1-s_2}\dots(q)_{s_{r-1}-s_r}}\nonumber\\
\qquad{}\times\frac{(a/\rho_1\sigma_1)_{n-s_1}(a/\rho_2\sigma_2)_{s_1-s_2}\cdots(a/\rho_i\sigma_i)_{s_{i-1}-s_i}}{(a/\rho_1,a/\sigma_1)_{n}(a/\rho_2,a/\sigma_2)_{s_1}
\cdots(a/\rho_i,a/\sigma_i)_{s_{i-1}}}\nonumber\\
\qquad{}\times
\frac{(aq/\rho_{i+1}\sigma_{i+1})_{s_i-s_{i+1}}\cdots(aq/\rho_r\sigma_r)_{s_{r-1}-s_r}}{(aq/\rho_{i+1},aq/\sigma_{i+1})_{s_i}
\cdots(aq/\rho_r,aq/\sigma_r)_{s_{r-1}}}\nonumber\\
\phantom{\qquad{}\times}{}=\frac{\alpha_0}{(q)_n(a)_n}+\sum_{j=1}^{n}
\frac{(\rho_1,\sigma_1,\dots,\rho_i,\sigma_i)_j(\rho_1\sigma_1\cdots\rho_i\sigma_i)^{-j}a^{ij}(1-a)}{(q)_{n-j}(a)_{n+j} (a/\rho_1,a/\sigma_1,\dots,a/\rho_i,a/\sigma_i)_j}\nonumber\\
\phantom{\qquad{}\times=}{}\times\left(\frac{(\rho_{i+1},\sigma_{i+1},\dots,\rho_r,\sigma_r)_j(\rho_{i+1}\sigma_{i+1}
\cdots\rho_r\sigma_r)^{-j}(aq)^{(r-i)j}q^j\alpha_j}{(aq/\rho_{i+1},aq/\sigma_{i+1},\dots,aq/\rho_r,aq/\sigma_r)_j\bigl(1-aq^{2j}\bigr)}\right.\nonumber\\
\phantom{\qquad{}\times=}{}\left.- \frac{(\rho_{i+1},\sigma_{i+1},\dots,\rho_r,\sigma_r)_{j-1}(\rho_{i+1}\sigma_{i+1}\cdots\rho_r\sigma_r)^{-j+1} (aq)^{(r-i)(j-1)}q^{j-1}\alpha_{j-1}}{(aq/\rho_{i+1},aq/\sigma_{i+1},
\dots,aq/\rho_r,aq/\sigma_r)_{j-1}\bigl(1-aq^{2j-2}\bigr)}\right).\!\!\!\label{ncsqnewlattice}
\end{gather}
\end{Theorem}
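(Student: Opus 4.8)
The plan is to carry out literally the iteration described just above the statement, keeping track of every factor as it accumulates. Starting from a Bailey pair $(\alpha_n,\beta_n)$ relative to $a$, I would first apply the Bailey lemma (Theorem~\ref{thm:baileylemma}) $r-i$ times, using the parameters $(\rho_k,\sigma_k)$ for $k=r,r-1,\dots,i+1$. Each application multiplies the $\alpha$-sequence by $\frac{(\rho_k,\sigma_k)_n(aq/\rho_k\sigma_k)^n}{(aq/\rho_k,aq/\sigma_k)_n}$ and refines the $\beta$-sequence into a nested sum; after these $r-i$ steps one obtains $\hat\alpha_j=\prod_{k=i+1}^r\frac{(\rho_k,\sigma_k)_j(aq/\rho_k\sigma_k)^j}{(aq/\rho_k,aq/\sigma_k)_j}\alpha_j$ together with the partial nested sum carrying the factors $(aq/\rho_k\sigma_k)_{s_{k-1}-s_k}/(aq/\rho_k,aq/\sigma_k)_{s_{k-1}}$ visible on the left-hand side of~\eqref{ncsqnewlattice}.

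Next I would apply the unilateral version of our new Bailey lattice (Theorem~\ref{thm:newbilatbaileylattice}) once, with $(\rho,\sigma)=(\rho_i,\sigma_i)$, to pass from a pair relative to $a$ to a pair relative to $a/q$. This is the step that distinguishes~\eqref{ncsqnewlattice} from its classical analogue Theorem~\ref{thm:bilatncsqbaileylattice}: the extra factor $q^j$ in the lattice's $\beta$-transformation is exactly what promotes $q^{s_{i+1}+\dots+s_r}$ to $q^{s_i+\dots+s_r}$ on the left, while the form $q^n\alpha_n$ and $q^{n-1}\alpha_{n-1}$ of the lattice's $\alpha$-transformation (in place of $\alpha_n$ and $aq^{2n-2}\alpha_{n-1}$) is what inserts the factors $q^j$ and $q^{j-1}$ into the two $\alpha$-terms on the right. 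I would then apply the Bailey lemma $i-1$ more times, now relative to $a/q$, with parameters $(\rho_k,\sigma_k)$ for $k=i-1,\dots,1$, producing the remaining factors $(a/\rho_k\sigma_k)/(a/\rho_k,a/\sigma_k)$ and the overall power $a^{ij}$. The resulting pair $(\tilde\alpha_n,\tilde\beta_n)$ is a Bailey pair relative to $a/q$, so applying its defining relation~\eqref{bp} with $a$ replaced by $a/q$, that is $\tilde\beta_n=\sum_{j=0}^n\tilde\alpha_j/\bigl((q)_{n-j}(a)_{n+j}\bigr)$, yields precisely~\eqref{ncsqnewlattice} once $\tilde\beta_n$ and $\tilde\alpha_j$ are written out (the extreme cases $i=0$ and $i=r$ being handled by the same argument with the corresponding stage empty).

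The main difficulty is entirely one of bookkeeping through the lattice step. Because Theorem~\ref{thm:newbilatbaileylattice} turns the single term $\hat\alpha_j$ into a difference of a $\hat\alpha_j$-contribution and a $\hat\alpha_{j-1}$-contribution, this difference must be transported faithfully through the subsequent relative-$a/q$ applications and then re-expanded; the two Pochhammer blocks $(aq/\rho_{i+1},\dots)_{j}$ and $(aq/\rho_{i+1},\dots)_{j-1}$ and the powers $(aq)^{(r-i)j}$ and $(aq)^{(r-i)(j-1)}$ on the right are exactly the images of the index $j$ and the shifted index $j-1$. The one genuinely delicate point is the isolation of the $j=0$ term: in the unilateral setting $\alpha_{-1}=0$ forces $\tilde\alpha_0=\alpha_0$, so the $j=0$ contribution of~\eqref{bp} collapses to the single term $\alpha_0/\bigl((q)_n(a)_n\bigr)$, whereas the full difference of $\alpha_j$- and $\alpha_{j-1}$-terms only survives for $j\geq1$, explaining the split on the right-hand side of~\eqref{ncsqnewlattice}. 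No identity beyond the Bailey lemma itself is needed, so the verification, though lengthy, is routine.
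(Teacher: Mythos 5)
Your proposal is correct and takes essentially the same route as the paper: iterate Theorem~\ref{thm:baileylemma} $r-i$ times, apply the unilateral version of Theorem~\ref{thm:newbilatbaileylattice} once with $(\rho,\sigma)=(\rho_i,\sigma_i)$, iterate $i-1$ more times with $a$ replaced by $a/q$, and conclude by applying~\eqref{bp} with $a$ replaced by $a/q$, with the $j=0$ term collapsing to $\alpha_0/\bigl((q)_n(a)_n\bigr)$ exactly as you explain since $\alpha_{-1}=0$. Your identification of the $q^j$ factors as the precise source of the discrepancies with Theorem~\ref{thm:bilatncsqbaileylattice} matches the paper's presentation, which treats the whole computation as the direct analogue of~\cite[Theorem~3.1]{AAB}.
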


{\samepage Now let $n\to\infty$ in~\eqref{ncsqnewlattice} and simplify the factor $(q)_\infty^{-1}$ appearing on both sides, and rewrite the right-hand side by shifting the index $j$ to $j+1$ in the summation involving $\alpha_{j-1}$:
\begin{gather}
\sum_{s_1\geq\dots\geq s_{r}\geq0}\frac{a^{s_1+\dots+s_r}q^{s_i+\dots+s_{r}}\beta_{s_r}}{(\rho_1\sigma_1)^{s_1}\cdots(\rho_r\sigma_r)^{s_r}}\frac{(\rho_1,\sigma_1)_{s_1}
\cdots(\rho_r,\sigma_r)_{s_r}}{(q)_{s_1-s_2}\cdots(q)_{s_{r-1}-s_r}}\frac{(a/\rho_2\sigma_2)_{s_1-s_2}\cdots(a/\rho_i\sigma_i)_{s_{i-1}-s_i}} {(a/\rho_2,a/\sigma_2)_{s_1}
\cdots(a/\rho_i,a/\sigma_i)_{s_{i-1}}}\nonumber\\
\qquad{}\times\frac{(aq/\rho_{i+1}\sigma_{i+1})_{s_i-s_{i+1}}\cdots(aq/\rho_r\sigma_r)_{s_{r-1}-s_r}}{(aq/\rho_{i+1},aq/\sigma_{i+1})_{s_i}
\cdots(aq/\rho_r,aq/\sigma_r)_{s_{r-1}}}\nonumber\\
\phantom{\qquad{}\times}{}
=\frac{(a/\rho_1,a/\sigma_1)_\infty}{(a,a/\rho_1\sigma_1)_\infty}\sum_{j\geq0}\frac{1-a}{1-aq^{2j}}\nonumber\\
\phantom{\qquad{}\times=}{}\times
\frac{(\rho_1,\sigma_1,\dots,\rho_r,\sigma_r)_j(\rho_1\sigma_1\cdots\rho_r\sigma_r)^{-j}a^{rj}q^{(r-i+1)j}\alpha_j}{(a/\rho_1,a/\sigma_1,
\dots,a/\rho_i,a/\sigma_i,aq/\rho_{i+1},aq/\sigma_{i+1},\dots,aq/\rho_r,aq/\sigma_r)_j}\nonumber\\
\phantom{\qquad{}\times=}{}\times\left( 1-\frac{a^{i}}{\rho_1\sigma_1\cdots\rho_i\sigma_i}\right.\nonumber\\
\phantom{\qquad{}\times=}{}\left.\times\frac{\bigl(1-\rho_1q^j\bigr)\bigl(1-\sigma_1q^j\bigr)
\cdots\bigl(1-\rho_iq^j\bigr)\bigl(1-\sigma_iq^j\bigr)}{\bigl(1-aq^j/\rho_1\bigr)\bigl(1-aq^j/\sigma_1\bigr)
\cdots\bigl(1-aq^j/\rho_i\bigr)\bigl(1-aq^j/\sigma_i\bigr)}\right).\label{newlattice}
\end{gather}}%

In~\eqref{newlattice}, replace $r$ by $r-1$, and use the Bailey pair obtained from the unit Bailey pair~\eqref{ubp} by one iteration of the Bailey lemma given in Theorem~\ref{thm:baileylemma}. This yields for $0\leq i\leq r-1$,
\begin{gather}
\sum_{s_1\geq\dots\geq s_{r-1}\geq0}\frac{a^{s_1+\dots+s_{r-1}}q^{s_{i}+\dots+s_{r-1}}}{(\rho_1\sigma_1)^{s_1}\cdots(\rho_{r-1}\sigma_{r-1})^{s_{r-1}}}
\frac{(aq/\rho\sigma)_{s_{r-1}}}{(q,aq/\rho,aq/\sigma)_{s_{r-1}}}\frac{(\rho_1,\sigma_1)_{s_1}\cdots(\rho_{r-1}, \sigma_{r-1})_{s_{r-1}}}{(q)_{s_1-s_2}\cdots(q)_{s_{r-2}-s_{r-1}}}\nonumber\\
\qquad{}\times\frac{(a/\rho_2\sigma_2)_{s_1-s_2}\cdots(a/\rho_i\sigma_i)_{s_{i-1}-s_i}}{(a/\rho_2,a/\sigma_2)_{s_1}
\cdots(a/\rho_i,a/\sigma_i)_{s_{i-1}}}\frac{(aq/\rho_{i+1}\sigma_{i+1})_{s_i-s_{i+1}}\cdots(aq/\rho_{r-1} \sigma_{r-1})_{s_{r-2}-s_{r-1}}}{(aq/\rho_{i+1},aq/\sigma_{i+1})_{s_i}
\cdots(aq/\rho_{r-1},aq/\sigma_{r-1})_{s_{r-2}}}\nonumber\\
\phantom{\qquad{}\times}{}=\frac{(a/\rho_1,a/\sigma_1)_\infty}{(a,a/\rho_1\sigma_1)_\infty}
\sum_{j\geq0}\frac{(-1)^j(\rho_1,\sigma_1,\dots,\rho_{r-1},\sigma_{r-1},\rho,\sigma,a)_j}{(a/\rho_1,a/\sigma_1,\dots,a/\rho_i,a/\sigma_i)_j}\nonumber\\
\phantom{\qquad{}\times=}{}\times\frac{(\rho_1\sigma_1\cdots\rho_{r-1}\sigma_{r-1}\rho\sigma)^{-j}a^{rj}q^{(r-i+1)j+j(j-1)/2}}{(aq/\rho_{i+1},aq/\sigma_{i+1},
\dots,aq/\rho_{r-1},aq/\sigma_{r-1},aq/\rho,aq/\sigma,q)_j}\left(1-\frac{a^{i}}{\rho_1\sigma_1\cdots\rho_i\sigma_i}\right.\nonumber\\
\phantom{\qquad{}\times=}{}\times\left.\frac{\bigl(1-\rho_1q^j\bigr)\bigl(1-\sigma_1q^j\bigr)
\cdots\bigl(1-\rho_iq^j\bigr)\bigl(1-\sigma_iq^j\bigr)}{\bigl(1-aq^j/\rho_1\bigr)\bigl(1-aq^j/\sigma_1\bigr) \cdots\bigl(1-aq^j/\rho_i\bigr)\bigl(1-aq^j/\sigma_i\bigr)}\right).\label{newlattice2}
\end{gather}

Next, in~\eqref{newlattice2}, take $\sigma_1,\rho_j,\sigma_j\to\infty$ for $j=i+1,\dots,r-1$, which yields
\begin{gather}
\sum_{s_1\geq\dots\geq s_{r-1}\geq0}(-1)^{s_1}\frac{a^{s_1+\dots+s_{r-1}}q^{s_1^2/2+s_{i+1}^2+\dots+s_{r-1}^2-s_1/2+s_i}}{(\rho_1)^{s_1}(\rho_{2}\sigma_{2})^{s_{2}} \cdots(\rho_{i}\sigma_{i})^{s_{i}}}
\frac{(aq/\rho\sigma)_{s_{r-1}}}{(q,aq/\rho,aq/\sigma)_{s_{r-1}}}\nonumber\\
\qquad{}\times\frac{(\rho_1)_{s_1}(\rho_{2},\sigma_{2})_{s_{2}}\cdots(\rho_{i},\sigma_{i})_{s_{i}}}{(q)_{s_1-s_2} \cdots(q)_{s_{r-2}-s_{r-1}}}\frac{(a/\rho_2\sigma_2)_{s_1-s_2}
\cdots(a/\rho_i\sigma_i)_{s_{i-1}-s_i}}{(a/\rho_2,a/\sigma_2)_{s_1}\cdots(a/\rho_i,a/\sigma_i)_{s_{i-1}}}\nonumber\\
\phantom{\qquad{}\times}{}=\frac{(a/\rho_1)_\infty}{(a)_\infty}\sum_{j\geq0}\frac{(\rho_1,\rho_2,\sigma_2,\dots,\rho_{i},\sigma_{i},\rho,\sigma,a)_j
(\rho_1\rho_2\sigma_2\cdots\rho_{i}\sigma_{i}\rho\sigma)^{-j}a^{rj}q^{(r-i)j^2+j}}{(a/\rho_1,a/\rho_2,a/\sigma_2, \dots,a/\rho_i,a/\sigma_i,aq/\rho,aq/\sigma,q)_j}\nonumber\\
\phantom{\qquad{}\times=}{}\times
\bigg(1+\frac{a^{i}q^{j}}{\rho_1\rho_2\sigma_2\cdots\rho_i\sigma_i}\nonumber\\
\phantom{\qquad{}\times=}{}\times\frac{\bigl(1-\rho_1q^j\bigr)\bigl(1-\rho_2q^j\bigr)\bigl(1-\sigma_2q^j\bigr)\cdots\bigl(1-\rho_iq^j\bigr)
\bigl(1-\sigma_iq^j\bigr)}{\bigl(1-aq^j/\rho_1\bigr)
\bigl(1-aq^j/\rho_2\bigr)\bigl(1-aq^j/\sigma_2\bigr)\cdots\bigl(1-aq^j/\rho_i\bigr)\bigl(1-aq^j/\sigma_i\bigr)}\bigg).\label{newlattice3}
\end{gather}

Replacing $k$ by $r$ and $r$ by $i$, Bressoud's formula~\eqref{Bressoud2} becomes
\begin{equation*}
F_{2i-1,r,i}(c_1,c_2;b_1,\dots,b_{2i-1};a;q)=G_{2i-1,r,i}(c_1,c_2;b_1,\dots,b_{2i-1};a;q),
\end{equation*}
which corresponds to~\eqref{newlattice3} by taking $c_1=\rho$, $c_2=\sigma$, $b_1=\rho_1$, $b_2=\rho_2$, $b_{2i-1}=\sigma_2$, $\dots$, $b_{i}=\rho_i$, $b_{i+1}=\sigma_i$. As~\eqref{newlattice3} is valid for $0\leq i\leq r-1$, we conclude that both formulas~\eqref{eq:bressoud} and~\eqref{eq:bressoudlim} of Bressoud's theorem are special cases of our Bailey lattice.

In the first place, we tried to use the classical Bailey lattice of~\cite[Theorem 3.1]{AAB} (or the unilateral version in Theorem~\ref{thm:bilatncsqbaileylattice}) instead of Theorem~\ref{thm:ncsqnewbaileylattice}, and saw that to recover Bressoud's formula~\eqref{Bressoud2}, one has to follow the same lines as above. We came up with the following formula instead of~\eqref{newlattice3}:
\begin{gather}
\sum_{s_1\geq\dots\geq s_{r-1}\geq0}(-1)^{s_1}\frac{a^{s_1+\dots+s_{r-1}}q^{s_1^2/2+s_{i+1}^2+\dots+s_{r-1}^2-s_1/2}}{(\rho_1)^{s_1}(\rho_{2}\sigma_{2})^{s_{2}} \cdots(\rho_{i}\sigma_{i})^{s_{i}}}
\frac{(aq/\rho\sigma)_{s_{r-1}}}{(q,aq/\rho,aq/\sigma)_{s_{r-1}}}\nonumber\\
\qquad{}\times\frac{(\rho_1)_{s_1}(\rho_{2},\sigma_{2})_{s_{2}}\cdots(\rho_{i},\sigma_{i})_{s_{i}}}{(q)_{s_1-s_2} \cdots(q)_{s_{r-2}-s_{r-1}}}\frac{(a/\rho_2\sigma_2)_{s_1-s_2}
\cdots(a/\rho_i\sigma_i)_{s_{i-1}-s_i}}{(a/\rho_2,a/\sigma_2)_{s_1}\cdots(a/\rho_i,a/\sigma_i)_{s_{i-1}}}\nonumber\\
\phantom{\qquad{}\times}=\frac{(a/\rho_1)_\infty}{(a)_\infty}
\sum_{j\geq0}\frac{(\rho_1,\rho_2,\sigma_2,\dots,\rho_{i},\sigma_{i},\rho,\sigma,a)_j
(\rho_1\rho_2\sigma_2\cdots\rho_{i}\sigma_{i}\rho\sigma)^{-j}a^{rj}q^{(r-i)j^2}}{(a/\rho_1,a/\rho_2,a/\sigma_2,\dots,a/\rho_i,a/\sigma_i,aq/\rho,aq/\sigma,q)_j}
\nonumber\\
\phantom{\qquad\times=}\times\bigg(1+\frac{a^{i+1}q^{3j}}{\rho_1\rho_2\sigma_2\cdots\rho_i\sigma_i}\nonumber\\
\phantom{\qquad\times=}\times\frac{\bigl(1-\rho_1q^j\bigr)\bigl(1-\rho_2q^j\bigr)\bigl(1-\sigma_2q^j\bigr) \cdots\bigl(1-\rho_iq^j\bigr)\bigl(1-\sigma_iq^j\bigr)}{\bigl(1-aq^j/\rho_1\bigr)
\bigl(1-aq^j/\rho_2\bigr)\bigl(1-aq^j/\sigma_2\bigr)\cdots\bigl(1-aq^j/\rho_i\bigr)\bigl(1-aq^j/\sigma_i\bigr)}\bigg).\label{lattice3}
\end{gather}

Therefore, we could only prove the special case
\begin{gather*}
\lim_{b_{i+1},b_{i+2}\to\infty}F_{2i+1,r,i+1}(c_1,c_2;b_1,\dots,b_{2i+1};a;q)\\
\qquad=\lim_{b_{i+1},b_{i+2}\to\infty}G_{2i+1,r,i+1}(c_1,c_2;b_1,\dots,b_{2i+1};a;q)
\end{gather*}
of Bressoud's formula~\eqref{Bressoud2} in which one takes $k=r$, $r=i+1$, $c_1=\rho$, $c_2=\sigma$, $b_1=\rho_1$, $b_2=\rho_2$, $b_{2i+1}=\sigma_2$, $\dots$, $b_{i}=\rho_i$, $b_{i+3}=\sigma_i$.

But we could not derive the most general identity~\eqref{Bressoud2} of Bressoud in this way.

%%%%%%%%%%%%%%%%%%
\subsection{Special cases}
%%%%%%%%%%%%%%%%%%

The case $\lambda=1$ obtained from~\eqref{Bressoud2} by taking $b_j\to\infty$ for all $j\geq2$ (and replacing $k$ by $r$ and $r$ by $i$), exactly corresponds to~\eqref{lattice3} in which one takes $\rho_j,\sigma_j\to\infty$ for all $j\geq2$ and $\rho_1=b_1,\rho=c_1$, $\sigma=c_2$:
\begin{gather}
\sum_{s_1\geq\dots\geq s_{r-1}\geq0}(-1)^{s_1}\frac{a^{s_1+\dots+s_{r-1}}q^{s_1^2/2+s_2^2+\dots+s_{r-1}^2-s_{1}/2-s_{2}-\dots-s_{i-1}}}{(q)_{s_1-s_2} \cdots(q)_{s_{r-2}-s_{r-1}}(q)_{s_{r-1}}}
\frac{(b_1)_{s_1}}{b_1^{s_1}}\frac{(aq/c_1c_2)_{s_{r-1}}}{(aq/c_1,aq/c_2)_{s_{r-1}}}\nonumber\\
\qquad=\frac{(a/b_1)_\infty}{(a)_\infty}\sum_{j\geq0}a^{rj}q^{(r-1)j^2+(2-i)j}\left(1+\frac{a^{i}q^{(2i-1)j}}{b_1}\frac{1-b_1q^j}{1-aq^j/b_1}\right)\nonumber\\
\phantom{\qquad=}{}\times\frac{(b_1,c_1,c_2,a)_j(b_1c_1c_2)^{-j}}{(a/b_1,aq/c_1,aq/c_2,q)_j}.\label{lambda1}
\end{gather}

Note that we obtain the exact same formula with $i$ replaced by $i+1$ by taking similar limits in~\eqref{lattice3}.

Obviously, the case $\lambda=0$ of Bressoud's result is obtained from~\eqref{lambda1} by taking $b_1\to\infty$. Moreover all special case (3.2)--(3.7) in~\cite{Br80} are consequences of the latter $\lambda=0$ case, with the choices $(c_1\to\infty,c_2\to\infty,a=q)$, $(c_1\to\infty,c_2\to\infty,a=1)$, $(c_1=-q,c_2\to\infty,a=q)$, $(c_1=-1,c_2\to\infty,a=1)$, $(q\to q^2,c_1=-q,c_2\to\infty,a=1)$, and $(q\to q^2,c_1=-q,c_2\to\infty,a=q^2)$, respectively.
The two other special cases (3.8) and (3.9) of~\cite{Br80} are obtained from~\eqref{lambda1} with $\bigl(q\to q^2,c_1\to\infty,c_2\to\infty,b_1=-q,a=q^2\bigr)$ and $\bigl(q\to q^2,c_1=-q^2,c_2\to\infty,b_1=-q,a=q^2\bigr)$, respectively.

Therefore, we can conclude that all special cases of Bressoud's theorem exhibited in~\cite{Br80} (giving Andrews--Gordon, Bressoud, and G\"ollnitz--Gordon type identities) are consequences of both unilateral Bailey lattices, the classical one and the new one. This is not surprising by Remark~\ref{rk:coronewbilatbaileylattice}.

\section{Conclusion and open problems}
We saw several applications of our key lemmas and bilateral Bailey lattices throughout the paper. We conclude with a few possible further applications suggested by the referees.
\begin{itemize}\itemsep=0pt
\item In \cite{P}, Paule gave a way to iterate Bailey pairs different from the one of Andrews \cite{A1}, essentially creating in each step a
new Bailey pair with one free parameter $b$ (instead
of the two free parameters $\rho$ and $\sigma$ of Andrews) where in the underlying
simplification of the double sum one only requires the $q$-Chu--Vandermonde
summation (instead of the $q$-Pfaff--Saalsch\"utz summation). Despite of the simpler lemma, iteration still gives the full Andrews--Gordon and
Bressoud identities, and also a number of new identities for multisums. Could some of these identities be reproved or extended using the tools of the present paper?

\item Theorem \ref{thm:bilatncsqbaileylattice} seems reminiscent of a~theorem of Milne \cite[Theorem 1.7]{Mil80}, also related to the Bailey machinery. Would it be possible to use our techniques to prove this theorem directly? Milne originally proved it by applying Ismail's analytic continuation method to Andrew's multisum extension of the Watson transformation.

\item In a recent paper \cite{Sch23}, Schlosser obtained some other bilateral identities of the Rogers--Ramanujan type (see, for example, Theorem 2.1 or Theorem
4.6 in his paper). Can one find a bilateral Bailey pair that would yield Schlosser's results?

\item Recently Warnaar \cite{W23} gave an extension of the $A_2$ Bailey chain into a tree, proving Rogers--Ramanujan type $q$-series identities related to characters of the affine Lie algebra~\smash{$A_2^{(1)}$}. Can the $A_2$ Bailey chain or Warnaar's generalisation be bilateralised? If so, could this be extended to the $A_n$ or $C_n$ Bailey chains?

\item Related to the previous question, there already exist $A_n$ and $C_n$ Bailey lemmas, which are however essentially different from Warnaar's $A_2$ case, and could be named $A_n$ and $C_n$ Bailey lemmas of ``Milne type'' (see for instance the work of Milne and Lilly in~\cite{ML92, ML93}). Can one find ``Milne type'' $A_n$ and $C_n$ extensions of the results in the present paper? In~\cite{BS}, Bhatnagar and Schlosser find $A_n$ and $C_n$ elliptic Bailey lemmas, whose $p=0$ cases yield $A_n$ and $C_n$ well-poised Bailey lemmas.
\end{itemize}

\subsection*{Acknowledgements}

The authors are grateful to Jeremy Lovejoy for very helpful comments on an earlier version of this paper. The authors also thank the anonymous referees for their very careful read of the paper and for their great suggestions for improvement and future research directions.
The authors are partially funded by the ANR COMBIN\'e
ANR-19-CE48-0011. JD is funded by the SNSF Eccellenza grant number PCEFP2 202784.

\pdfbookmark[1]{References}{ref}
\LastPageEnding

\end{document}